\newcommand \R{\mathbb{R}}
\newcommand \C{\mathcal{C}}
\newcommand \Z{\mathbb{Z}}
\newcommand \N{\mathbb{N}}
\newcommand \D{\mathcal{D}}
\newcommand \Se{\mathcal{S}}
\newcommand \A{\mathfrak{A}}
\newtheorem{thm}{Theorem}
\newtheorem*{thrm}{Theorem}
\newtheorem{coro}[thm]{Corollary}
\newtheorem{lem}[thm]{Lemma}
\newtheorem{prop}[thm]{Proposition}
\newtheorem*{propo}{Proposition}
\theoremstyle{definition}
\newtheorem{defs}[thm]{Definition}
\newtheorem*{quest}{Question}
\newtheorem{ex}[thm]{Example}
\title {Complete Reducibility in Euclidean Twin Buildings}
\author {Denise K. Dawson}
\begin{document}

\maketitle

\begin{abstract}
In \cite{Se04}, J.P. Serre defined completely reducible subcomplexes of spherical buildings in order to study subgroups of reductive algebraic groups.  This paper begins the exploration of how one may use a similar notion of completely reducible subcomplexes of twin buildings to study subgroups of algebraic groups over a ring of Laurent polynomials and Kac-Moody groups. In this paper we explore the definitions of convexity and complete reducibility in twin buildings and some implications of the two in the Euclidean case.
\end{abstract}

\section{Introduction} Buildings were introduced by J. Tits as a geometric tool for studying certain algebraic groups over a field. A building can be thought of as a simplicial complex which is obtained by gluing together subcomplexes called apartments, which are made up of chambers (the simplices of maximal dimension) satisfying certain axioms. The apartments of a building are all isomorphic to a Coxeter complex.  For example, consider the reflection group $D_{2m}=\left<s,t|s^2=t^2=(st)^m=1\right>$. The elements of $D_{2m}$ act on the plane and we can consider the set of hyperplanes corresponding to the reflections.  By cutting the unit circle by these hyperplanes we get a decomposition of the circle into simplices, and this simplicial complex is a spherical Coxeter complex.  If $m=3$ then the simplicial complex will be a hexagon.

We can construct a building associated to $GL_n(k)$ for a field $k$ as follows.  Let $k$ be a field and let $\Delta(k^n)$ be the abstract simplicial complex with vertices being the nonzero proper subspaces of $k^n$, and with the maximal simplices being the chains $V_1<V_2<\ldots,<V_{n-1}$ of such subspaces. Then $\Delta(k^n)$ is a building and any basis of $k^n$ yields an apartment. This apartment consists of the vertices which correspond to subspaces spanned by proper nonempty subsets of the basis, and the simplices correspond to chains of these subspaces. For example, if $n=3$ and $\{e_1,e_2,e_3\}$ is any basis for $k^3$, then we get an apartment of $\Delta(k^3)$. The vertices correpond to the six proper nonempty subsets and the one-dimensional simplices correspond to chains of these subsets, hence we have the hexagon mentioned above.  Since the Coxeter complex is spherical, this is called a spherical building.

In spherical Coxeter complexes there is a bounded distance between any two points so there is a natural idea of opposite vertices and hence opposite chambers, which leads to many interesting properties of spherical buildings.  In buildings of nonspherical type (e.g. Euclidean buildings), there is no bound on the distance between any two vertices so there is no notion of opposition. 

Twin buildings were introduced by M. Ronan and J. Tits as a tool for studying groups of Kac-Moody type. They arise from these groups much like spherical buildings arise from algebraic groups and they extend to nonspherical buildings some of the ideas of spherical buildings, such as opposition. A twin building consists of a pair of buildings $(\C_+,\C_-)$ of the same type with an \emph{opposition relation} between the chambers of the two components. 

One consequence of the existence of opposites in spherical buildings is that one can use properties of the building to study completely reducible subgroups of a group $G$ which acts on a spherical building. In \cite{Se04}, J.P. Serre gives a definition for a completely reducible subgroup of a reductive algebraic group which generalizes the definition of a completely reducible representation and uses the existence of opposite simplices in the corresponding spherical buildings. His definition in terms of opposite simplices can be extended to a definition of complete reducibility in twin buildings.

Recall that if $V$ is a representation of a group $G$ then $V$ is \emph{completely reducible} if and only if for every proper $G$-invariant subspace $W$ of $V$ there is a proper $G$-invariant subspace $W'$ such that $W\oplus W'=V$. Since vertices in the spherical building associated to $GL(V)$ correspond to subspaces of $V$ and opposite vertices correspond to complementary subspaces this can be rephrased in terms of the building as follows.  

For a vector space $V$ over a field $k$, the group $GL(V)$ acts on a spherical building, call it $X$. For a subgroup $\Gamma$ of $GL(V)$, let $X^\Gamma$ be the set of points of $X$ which are fixed by the action of $\Gamma$, then $V$ is completely reducible if and only if every vertex of $X^\Gamma$ has an opposite vertex in $X^\Gamma$. This definition has an analogue in terms of parabolic subgroups containing $\Gamma$ since the simplices fixed by $\Gamma$ correspond to the parabolic subgroups containing $\Gamma$. Serre then extends the idea of complete reducibility to subgroups of any group which acts on a spherical building, specifically reductive algebraic groups.

The points fixed by $\Gamma$ form a convex subcomplex and the definition of complete reducibility can be applied to an arbitrary convex subcomplex of a spherical building. A convex subcomplex $Y$ is completely reducible if and only if every simplex of $Y$ has an opposite in $Y$.

In \cite{Ca09}, P. E. Caprace introduces the definition of completely reducible subgroups of a group $G$ with a twin $BN$-pair: a subgroup $H$ of $G$ is \emph{completely reducible} if $H$ is bounded and if given a parabolic subgroup $P$ of finite type which contains $H$, then there is a parabolic subgroup opposite $P$ which is of finite type and contains $H$.

A group $G$ with a twin $BN$-pair gives rise to a twin building $\C=(\C_+,\C_-)$ (see \cite{AB08} Chapter 8 for details) in such a way that the parabolic subgroups of $G$ correspond to the simplices (or equivalently, residues) of $\C$. Then the above definition of complete reducibility is equivalent to requiring that for every simplex (residue) in the fixed point subcomplex of $H$ in $\C$, there is an opposite simplex (residue) in the fixed point subcomplex of $H$.

The points fixed by $H$ form a convex subcomplex of $\C$ and we can extend this definition of complete reducibility to any convex subcomplex $Y=(Y_+,Y_-)$ of a twin building such that $Y_\epsilon$ is not empty and every simplex of $Y_\epsilon$ has an opposite simplex in $Y_{-\epsilon}$.

Convexity in a single building is more understood than convexity in twin buildings. P. Abramenko and K.S. Brown give a definition of convexity for chamber subcomplexes of a twin building in \cite{AB08} and Abramenko explores general convex subcomplexes in twin buildings in \cite{Ab96} but leaves several questions. Completely reducible subcomplexes are not always chamber subcomplexes so it is important to develop an understanding of general convex subcomplexes of twin buildings.

A subcomplex of a twin building is convex if and only if its intersection with any twin apartment is convex, so it suffices to study convexity in a twin apartment. A useful tool for studying apartments has been the Tits cone, which was introduced to study Coxeter complexes geometrically.  The Tits cone is a (possibly infinite) hyperplane arrangement of a subset of a real vector space and the chambers in an apartment correspond to simplicial cones defined by hyperplanes.  In nonspherical buildings the Tits cone is a convex subset of the vector space, so we can take the union of this subset with its negative and obtain a good representation of a twin apartment called the \emph{twin Tits cone}.

The definition of convexity in the vector space agrees with the definition of convexity in a building, but since the twin Tits cone is strictly contained in the vector space we need a slightly modified definition of convexity. We can define convexity in the twin Tits cone, $X$, as follows: if $X'$ is a subset of $X$ and $x,y$ are points in $X'$, then $X'$ is convex if and only if the geodesic $[x,y]\cap X$ is contained in $X'$. This leads to the following result about convexity in twin apartments.

\begin{thrm}Let $\Sigma'=(\Sigma'_+,\Sigma'_-)$ be a pair of nonempty subcomplexes of a twin apartment $\Sigma$ such that $\Sigma'_+$ and $\Sigma'_-$ each contain a spherical simplex. Then the following are equivalent:
  \begin{enumerate}
    \item $\Sigma'$ is convex in $\Sigma$, i.e. closed under projections.
    \item $\Sigma'$ is an intersection of twin roots.
    \item Let $X'$ be the union of the cells corresponding to $\Sigma'$ in the twin Tits cone $X$. Then $X'$ is convex in $X$.
  \end{enumerate}
\end{thrm}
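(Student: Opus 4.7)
The plan is to use the analog of this result in a single Coxeter complex and lift it to the twin setting, with the spherical simplex hypothesis playing the role that ``containing a chamber'' plays in the classical apartment case. Throughout I would work in the linear (vector space) model of the twin Tits cone so that twin roots become single closed linear half-spaces.

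First I would establish (2) $\Rightarrow$ (3). A twin root $(\alpha_+,\alpha_-)$ corresponds to a single closed half-space $H^+$ in the ambient vector space of $X$: $\alpha_+$ is $H^+$ intersected with the positive Tits cone and $\alpha_-$ is $H^+$ intersected with its negative. An intersection of such half-spaces is convex in the ambient vector space, so its intersection with $X$ is convex in $X$ in the sense defined above.

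Next I would prove (3) $\Rightarrow$ (2). Let $\mathcal{W}$ be the collection of twin walls of $\Sigma$ whose associated hyperplane does not cross the (relative) interior of $X'$. For each $H\in\mathcal{W}$, $X'$ lies on a definite side of $H$, giving a twin root $\alpha_H$ containing $X'$; I would then show $X' = X\cap\bigcap_{H\in\mathcal{W}}\alpha_H$ by a standard wall-separation argument applied in the vector space. The spherical simplex hypothesis is what guarantees the two halves $\Sigma'_+, \Sigma'_-$ contribute compatibly: a spherical simplex in $\Sigma'_\epsilon$ has a well-defined opposite in $\Sigma_{-\epsilon}$ that, by convexity of $X'$ in $X$, aligns with the structure of $\Sigma'_{-\epsilon}$.

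For (2) $\Leftrightarrow$ (1), I would use that a twin root is itself closed under the projection maps of the twin apartment, so any intersection of twin roots is projection-closed, giving (2) $\Rightarrow$ (1). For the converse, each half $\Sigma'_\epsilon$ is closed under projections in the ordinary apartment $\Sigma_\epsilon$ and hence, by the classical single-apartment result, is an intersection of ordinary roots; closure of $\Sigma'$ under the twin projections between $\Sigma_+$ and $\Sigma_-$ then forces these ordinary roots to pair into twin roots.

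The main obstacle I expect is this pairing step: given independent descriptions of $\Sigma'_+$ and $\Sigma'_-$ as intersections of ordinary roots (or, analogously, independent side choices in the implication from (3)), showing that the ordinary roots on the two sides combine into a common family of twin roots. This is precisely where the spherical simplex hypothesis is indispensable, since without it the two halves can be chosen independently in ways not realized by any twin root. I would handle this matching by fixing a spherical simplex in each half, using its unique opposite in the other half as a reference point, and tracking sidedness of walls through the projection maps of the twin apartment to transfer compatibility information from one side to the other.
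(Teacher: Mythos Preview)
Your $(2)\Rightarrow(3)$ and $(2)\Rightarrow(1)$ are fine; the paper proves $(2)\Rightarrow(3)$ the same way, and $(2)\Rightarrow(1)$ is implicit since twin roots are convex. Your $(1)\Rightarrow(2)$ also begins exactly as the paper does---write each $\Sigma'_\epsilon$ as an intersection of ordinary roots determined by its boundary panels---but your proposed pairing mechanism (fixing one spherical simplex per half and its opposite as a ``reference point'') is not what the paper does and is too vague to constitute a proof. The paper's argument is local to each boundary panel: a preceding lemma shows that every $\Sigma'_\epsilon$-chamber, and hence every boundary panel $A$, is spherical, so the twin projection onto $A$ is defined; then for any $B\in\Sigma'_{-\epsilon}$ with $B\notin\alpha_{-\epsilon}$ one has $\mbox{op}_\Sigma B\in\alpha_\epsilon\setminus\partial\alpha_\epsilon$, and the sign-sequence formula for $\mbox{proj}_A B$ forces $\mbox{proj}_A B\in(-\alpha_\epsilon)\setminus\partial\alpha_\epsilon$, contradicting (1). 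This is where the spherical hypothesis is actually consumed, and you should use it there (to make the boundary panels spherical) rather than as a single global reference.

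The genuine structural difference is that the paper closes the cycle via $(3)\Rightarrow(1)$, not $(3)\Rightarrow(2)$. It never attempts wall-separation in $V$; instead it shows directly that for spherical $A$ and arbitrary $B$, the cell of minimal dimension containing a short initial segment of $[x,y]$ (with $x$ interior to $A$, $y$ interior to $B$) is exactly $\mbox{proj}_A B$, by comparing it hyperplane-by-hyperplane with $\mbox{op}_{E_A}(\mbox{proj}_A(\mbox{op}_E B))$. This stays entirely inside $X$. Your wall-separation route is not as ``standard'' as you suggest: $X'$ is convex only in $X$, not in $V$, so you would first have to pass to the $V$-convex hull (which is the Minkowski sum $X'_+ + X'_-$), check that intersecting it back with $X$ recovers $X'$, and then argue that this sum is still cut out by hyperplanes from the wall arrangement. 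The last step is not automatic for Minkowski sums and is where your argument would need real work; the paper's $(3)\Rightarrow(1)$ sidesteps it entirely.
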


Euclidean buildings have the unique property that there is an associated spherical building at infinity and in \cite{Ro03}, M. Ronan shows that for a twin Euclidean building there are sub-buildings of the corresponding buildings at infinity which are naturally twinned. Our main result allows us to only consider the subcomplexes of the spherical buildings at infinity to determine if a subcomplex is completely reducible.

\begin{thrm}[Main Theorem]Let $X=(X_+,X_-)$ be a Euclidean twin building and $Y=(Y_+,Y_-)$ a convex subcomplex of $X=(X_+,X_-)$. Let $I=(I_+,I_-)$ be the set of interior points in the buildings at infinity as in Section 4.2 and $Y^\infty=(Y^\infty_+,Y^\infty_-)$ the subcomplex of $I$ corresponding to $Y$. Then $Y$ is a completely reducible subcomplex of $X$ if and only if every simplex of maximal dimension in $Y^\infty$ has an interior opposite in $Y^\infty$.\end{thrm}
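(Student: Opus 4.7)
The plan is to translate the question of complete reducibility of $Y$ in the Euclidean twin building $X$ into a question about chambers of the spherical buildings at infinity, using the classical correspondence between sectors in a Euclidean building and chambers in its building at infinity together with Ronan's twinning of the interior sub-buildings of $X_+^\infty$ and $X_-^\infty$.

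For the forward direction, I would assume $Y$ is completely reducible and take a maximal-dimensional simplex $\sigma^\infty$ in $Y_+^\infty$. Since $\sigma^\infty$ belongs to $I_+$, it is the direction of some sector $S \subset Y_+$ based at a chamber $C$. Complete reducibility provides an opposite chamber $C'$ in $Y_-$, and by the standard theory of twin buildings there is a twin apartment $\Sigma = (\Sigma_+,\Sigma_-)$ containing $C$ and $C'$. In $\Sigma$ the sector $S$ has a canonical opposite sector $S'$ in $\Sigma_-$ whose direction is opposite to $\sigma^\infty$ and lies in $I_-$. Using convexity of $Y$ within $\Sigma$ (as characterized by the earlier theorem on twin apartments) together with the opposites of the base simplices of $S'$ supplied by complete reducibility, one shows that a germ of $S'$ lies in $Y_-$, so its direction at infinity is the desired interior opposite of $\sigma^\infty$ in $Y_-^\infty$.

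For the backward direction, assume every maximal-dimensional simplex of $Y^\infty$ has an interior opposite in $Y^\infty$, and let $\sigma$ be an arbitrary simplex of $Y_+$. I would embed $\sigma$ as a face of a chamber $C \subset Y_+$ and choose a sector $S$ based at $C$ whose direction $\sigma^\infty$ is a maximal-dimensional simplex of $Y_+^\infty$; the existence of such a sector uses that $Y_+$ is convex and contains a spherical simplex, which forces enough sectors with directions in $I_+$ to be available inside $Y_+$. By hypothesis $\sigma^\infty$ admits an interior opposite $\tau^\infty \in Y_-^\infty$, corresponding to a sector $S' \subset Y_-$ of opposite direction. Since both $\sigma^\infty$ and $\tau^\infty$ are interior, the twinning on the interior sub-buildings provides a twin apartment $\Sigma$ of $X$ containing (germs of) $S$ and $S'$; inside $\Sigma$, opposition of the sector directions at infinity produces, at a sufficiently deep level of $S'$, a simplex $\sigma^*$ opposite to $\sigma$ in the twin sense. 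The intersection $Y \cap \Sigma$ is convex in $\Sigma$ by the earlier theorem, and contains the relevant germs of $S$ and $S'$; convexity then places $\sigma^*$ in $Y_-$, completing the argument.

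The main obstacle will be the backward direction, and within it the two delicate steps are (a) producing, from an arbitrary simplex $\sigma \in Y_+$, a sector in $Y_+$ whose direction is a chamber at infinity lying in $Y_+^\infty$, so that the hypothesis can be applied, and (b) passing from an opposite chamber at infinity in the twinned interior sub-building back to an actual opposite of $\sigma$ inside $Y_-$. Step (a) requires a careful use of convexity of $Y$ together with the assumption that $Y_\pm$ contain spherical simplices, while step (b) is where the qualifier \emph{interior} is essential: only interior chambers at infinity participate in the twinning, so only they guarantee the existence of a twin apartment of $X$ realizing the opposition. Once such a twin apartment is produced, the earlier equivalence of convexity with closure under projections (and with intersections of twin roots) in twin apartments does the remaining geometric work.
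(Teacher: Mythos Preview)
Your outline has the right overall architecture, but both directions contain a gap at the same point: you assume that a single twin apartment can be chosen to contain both the infinite conical cell you care about \emph{and} the particular finite simplex whose opposite you need. In the forward direction you write ``there is a twin apartment $\Sigma=(\Sigma_+,\Sigma_-)$ containing $C$ and $C'$. In $\Sigma$ the sector $S$ has a canonical opposite sector $S'$\ldots''. But the twin apartment $\Sigma=\mbox{Con}(C,C')$ is the convex hull of two \emph{finite} simplices; there is no reason the infinite sector $S$ (or any sub-sector of it) lies in $\Sigma$, so $S$ has no ``canonical opposite'' there. In the backward direction the same problem appears mirror-imaged: the twin apartment produced from an interior-opposite pair of directions contains only \emph{germs} of $S$ and $S'$, possibly based far from $\sigma$, so there is no reason $\sigma\in\Sigma$, and hence no simplex of $\Sigma_-$ is automatically opposite $\sigma$. (Also, $Y$ need not contain any chambers of $X$; ``chamber'' should everywhere be ``$Y$-chamber'', i.e.\ a maximal simplex of $Y$, and sectors are based at points, not simplices.)

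The paper closes exactly these gaps. For $(\Leftarrow)$ it anchors everything at the vertex $x$ from the start: it builds the conical cell $\mathfrak{A}\subset\mbox{Con}(x,A)_+$ \emph{based at $x$} (Lemma~\ref{consecface}), so that any twin apartment containing $\mathfrak{A}$ automatically contains $x$; then it uses Proposition~\ref{comapart} and an intersection argument to produce a sub-conical-cell $\mathfrak{U}'\subset Y_-$ with $\mbox{op}_E\mathfrak{U}'\subset\mathfrak{A}\subset Y_+$, yielding a Levi sphere through $x$. For $(\Rightarrow)$ the paper does \emph{not} try to fit everything into one apartment; instead it runs an iterative ``apartment-swapping'' procedure: starting from $R^{(1)}=\mbox{Con}(y,\mathfrak{A}')$, it uses complete reducibility at a boundary panel to find a new maximal simplex $d_2\in Y$ just outside $R^{(1)}$, invokes Lemma~\ref{linkapart} to pass to a new twin apartment $E_2\supset R^{(1)}\cup\{d_2\}$, and repeats, strictly decreasing a gallery distance to $\mbox{op}_{E_i}A$ until $\mbox{Supp}_{E_n}\mathfrak{A}'\subset Y$. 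This inductive change-of-apartment step is the real content of the forward direction, and your sketch does not supply a substitute for it.
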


We also show that we only need to consider the set of vertices at infinity in our study of complete reducibility.

\begin{thrm} A convex subcomplex $Y$ is $X$-completely reducible if and only if every vertex in $Y^\infty$ has an interior opposite in $Y^\infty$.\end{thrm}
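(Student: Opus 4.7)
The plan is to apply the Main Theorem to reduce the question to a statement purely about subcomplexes of the spherical buildings at infinity, and then to prove the vertex-reduction by an induction on simplex dimension using the link construction.

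By the Main Theorem, $Y$ is $X$-completely reducible if and only if every maximal-dimensional simplex of $Y^\infty$ has an interior opposite in $Y^\infty$. Because $Y^\infty$ sits inside the interior set $I$ by construction, any opposite lying in $Y^\infty$ is automatically interior, so it suffices to show that \emph{every maximal simplex of $Y^\infty$ has an opposite in $Y^\infty$ if and only if every vertex does}.

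The forward direction is immediate: a vertex $v\in Y^\infty_\epsilon$ extends to a maximal simplex $\sigma$ of $Y^\infty_\epsilon$, which has an opposite $\sigma'\subseteq Y^\infty_{-\epsilon}$, and the longest-element bijection between the vertex sets of $\sigma$ and $\sigma'$ in any common twin apartment pairs $v$ with an opposite vertex inside $\sigma'\subseteq Y^\infty_{-\epsilon}$. For the converse I would prove by induction on $\dim\sigma$ the stronger statement that every simplex of $Y^\infty$ admits an opposite in $Y^\infty$. The base case is the hypothesis. For the inductive step, write $\sigma = v * \tau$ with $v$ a vertex, obtain $v'$ opposite $v$ in $Y^\infty_{-\epsilon}$ from the hypothesis, and pass to the link $\mathrm{lk}(v')$, which is a spherical building of strictly smaller rank. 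Convexity of $Y^\infty$ should restrict to convexity of the subcomplex $Z \subseteq \mathrm{lk}(v')$ cut out by the simplices of $Y^\infty$ having $v'$ as a face, and the vertex-opposite hypothesis, transferred across the natural isomorphism $\mathrm{lk}(v) \cong \mathrm{lk}(v')$ coming from a twin apartment through $v$ and $v'$, should force every vertex of $Z$ to have an opposite in $Z$. The inductive hypothesis applied inside $Z$ then yields an opposite $\tau'$ of (the image of) $\tau$, and $\sigma':= v' * \tau'$ is an opposite of $\sigma$ lying in $Y^\infty_{-\epsilon}$.

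The main obstacle is this link step. I expect the delicate points to be: verifying that convexity of $Y^\infty$ in the twin Tits cone descends to convexity of $Z$ in $\mathrm{lk}(v')$, and verifying that the vertex-opposite hypothesis on $Y^\infty$ transfers cleanly to the analogous hypothesis on $Z$ via the projection maps associated to the opposite pair $v,v'$. These are local-to-global compatibilities between opposition and projection in spherical buildings; once they are pinned down, the induction closes and the theorem follows.
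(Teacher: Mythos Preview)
Your reduction to a purely spherical statement has a genuine gap at the very first step. You claim that ``any opposite lying in $Y^\infty$ is automatically interior'', but this is false: interior opposition is \emph{strictly stronger} than opposition in $I$. Two simplices $A\in I_+$ and $A'\in I_-$ are interior opposite only when representing conical cells are opposite in a common \emph{twin} apartment, equivalently when $A$ and the twin-image of $A'$ are opposite in an \emph{interior} apartment of $I_+$. The paper notes explicitly (just after the corollary following the Main Theorem) that the converse of that corollary fails precisely because not every apartment of $I_\epsilon$ is interior; in the twin-tree case any two distinct ends are spherically opposite, while interior opposition requires them to bound a twin apartment. So you cannot drop the qualifier ``interior'' and argue with ordinary spherical opposition.

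This breaks the link induction. Inside $\mathrm{lk}(v')$ you only have spherical opposition, but the hypothesis you must propagate is about \emph{interior} opposites, and there is no evident intrinsic way to express that inside the link. Even if you set up a pair $(Z_+,Z_-)\subseteq(\mathrm{lk}(v),\mathrm{lk}(v'))$, you would need to identify which apartments of this link-pair arise from twin apartments of $X$ and verify that the vertex hypothesis transfers to \emph{interior}-opposite vertices in that restricted sense; this is not a routine compatibility. You also use convexity of $Y^\infty$, which the paper never establishes.

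The paper avoids all of this by \emph{not} reducing to a statement about $Y^\infty$ alone. It works directly in $X$: given $x\in Y_+$, it builds an increasing family of convex subcomplexes $R^{(i)}\subseteq Y$ in successive twin apartments $E_i$, at each step invoking the hypothesis only to obtain one interior-opposite vertex at infinity, which supplies a ray in $Y_-$ along which $R^{(i)}$ can be enlarged so as to strictly decrease the number of walls separating its boundary from $\mathrm{op}_{E_i}x$. Finitely many steps produce an actual opposite of $x$ in $Y$. No structural properties of $Y^\infty$ as a subcomplex of $I$ are used.
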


As an example for how this can be applied to a group with a twin $BN$-pair, Let $k$ be a field, $F=k(t)$, $R=k[t,t^{-1}]$, and $G=SL_n[R]$. Then $G$ has a twin $BN$-pair and an associated Euclidean twin building. Let $X=(X_+,X_-)$ be the geometric realization of this twin building and let $\Gamma$ be a subgroup of $G$ with fixed point complex $Y=(Y_+,Y_-)$ with $Y_\epsilon$ non empty for each $\epsilon\in\{+,-\}$. Then we have the following consequences of the preceding theorem.

\begin{propo}The subgroup $\Gamma$ is completely reducible if and only if every $\Gamma$-invariant $R$-submodule of $R^n$ which is a $R$ direct summand of $R^n$ has a $\Gamma$-invariant $R$-complement.\end{propo}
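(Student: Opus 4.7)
The plan is to apply the vertex form of the Main Theorem to the convex subcomplex $Y=(Y_+,Y_-)$: once this is done, it suffices to show that the condition ``every vertex of $Y^\infty$ has an interior opposite in $Y^\infty$'' translates exactly into the complementation condition in the proposition. Throughout, I will use the standard dictionary from twin $BN$-pair theory: for $G=SL_n[R]$, the $\Gamma$-fixed simplices of $X$ correspond to parabolic subgroups of $G$ containing $\Gamma$, and the $\Gamma$-fixed simplices of $Y^\infty$ correspond to those parabolics that are of finite type.

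The heart of the argument is a dictionary between interior vertices of the buildings at infinity and direct $R$-summands of $R^n$. Each half $X_\epsilon$ is the Bruhat--Tits building of $SL_n(F_\epsilon)$, where $F_\epsilon$ is the completion of $k(t)$ at $t=0$ or $t=\infty$, so its spherical building at infinity has vertices corresponding to the proper nonzero $F_\epsilon$-subspaces of $F_\epsilon^n$. Using Ronan's description (cited above) of the natural twinning between subcomplexes of the two buildings at infinity, I will argue that the interior vertices in $I_\epsilon$ are exactly those of the form $M\otimes_R F_\epsilon$ with $M$ a proper nonzero direct $R$-summand of $R^n$, and that such a vertex is fixed by $\Gamma$ iff $M$ is $\Gamma$-invariant. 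Moreover, two interior vertices with representing summands $M_+$ and $M_-$ are opposite in the twinning iff $M_+\oplus M_-=R^n$. This combines the classical fact that opposite vertices in the spherical building of $SL_n(F_\epsilon)$ correspond to complementary subspaces with the fact that the twinning/interior condition forces the $F_\epsilon$-complement to descend to a genuine $R$-direct summand.

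Assembling the dictionary, the hypothesis of the vertex form of the Main Theorem becomes the assertion that every $\Gamma$-invariant proper direct $R$-summand of $R^n$ has a $\Gamma$-invariant $R$-complement, which is precisely the statement of the proposition. The hard part will be verifying the ``interior + opposite $\Leftrightarrow$ complementary $R$-summands'' correspondence, particularly the descent direction: one must show that an $F_\epsilon$-linear complement living in the $\Gamma$-fixed twinned subcomplex actually arises from an honest $R$-module complement of $M$ in $R^n$, and not merely from an $F_\epsilon$-subspace complement of $M\otimes_R F_\epsilon$. Once this descent is established, the proposition follows immediately from the vertex form of the Main Theorem.
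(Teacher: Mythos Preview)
Your proposal is correct and follows the same strategy as the paper: invoke Theorem~\ref{inftyvert} (the vertex form of the main theorem) and then translate the condition on $Y^\infty$ into the module-theoretic statement via the dictionary \emph{interior vertex $\leftrightarrow$ direct $R$-summand}, \emph{interior opposition $\leftrightarrow$ $R$-complementarity}. The only mild difference is in how that dictionary is established. You frame it as a descent problem---start with $F_\epsilon$-subspaces in the full building at infinity and argue that interiority forces the subspace to come from an $R$-summand---whereas the paper works directly with the lattice-class model: a twin apartment is determined by an $R$-basis $\{f_1,\ldots,f_n\}$ of $R^n$, an interior ray in that apartment is given by a subset $A\subseteq\{1,\ldots,n\}$ and corresponds to the summand $\bigoplus_{i\in A} Rf_i$, and opposite rays in the twin apartment correspond to complementary subsets, hence to complementary summands. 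This combinatorial route avoids any explicit descent argument; your approach via the $F_\epsilon$-building and descent is more conceptual but requires you to actually carry out the descent step, which the paper's explicit construction sidesteps.
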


\begin{propo} Let $K=k(t)$ and let $\Gamma$ be a completely reducible subgroup of $G$. Then $R^n=M_1\oplus\cdots\oplus M_k$ where each $M_i$ is a $\Gamma$-invariant $R$ submodule such that $K\otimes_R M_i$ is irreducible in $K^n$.\end{propo}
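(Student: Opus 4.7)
The plan is to induct on the $R$-rank, peeling off $\Gamma$-invariant pieces one at a time using the preceding proposition. The key fact is that $R = k[t,t^{-1}]$, being a localization of the PID $k[t]$, is itself a principal ideal domain; this lets us pass freely between $\Gamma$-invariant $K$-subspaces of $K^n$ and $\Gamma$-invariant $R$-direct summands of $R^n$.

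The central construction is this. Given any $\Gamma$-invariant $K$-subspace $V \subseteq K^n$, set $N = V \cap R^n$. Then $N$ is $\Gamma$-invariant, and the quotient $R^n/N$ is torsion-free: if $r x \in N$ with $r \in R$ nonzero and $x \in R^n$, then $r x \in V$ forces $x \in V$ (since $V$ is a $K$-subspace), hence $x \in N$. A finitely generated torsion-free module over the PID $R$ is free, so $N$ is an $R$-direct summand of $R^n$; clearing denominators shows $K \otimes_R N = V$. Now start with $M = R^n$. If $K \otimes_R M$ is $\Gamma$-irreducible, stop. Otherwise choose a proper nonzero $\Gamma$-invariant subspace $V \subsetneq K \otimes_R M$, form $N = V \cap R^n$, and apply the preceding proposition to obtain a $\Gamma$-invariant $R$-complement $N''$ with $R^n = N \oplus N''$. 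The same torsion-free-quotient argument shows that $M \cap N''$ is also an $R$-direct summand of $R^n$, while modularity gives $M = N \oplus (M \cap N'')$, with both summands $\Gamma$-invariant.

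Iterating on each of $N$ and $M \cap N''$ yields $\Gamma$-invariant $R$-direct summands of $R^n$ whose $R$-ranks strictly decrease, so the process terminates in a decomposition $R^n = M_1 \oplus \cdots \oplus M_k$ in which no $K \otimes_R M_i$ admits a proper nonzero $\Gamma$-invariant $K$-subspace, which is the stated irreducibility. The main technical obstacle is that the preceding proposition can only be applied to $R$-direct summands of $R^n$ itself, not merely of an intermediate $M$; this is exactly what the PID structure of $R$ secures, via the torsion-free quotient observation, and it is the only nontrivial input beyond the preceding proposition and the structure theorem for finitely generated modules over a PID.
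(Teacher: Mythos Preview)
Your proposal is correct and follows essentially the same route as the paper: intersect a $\Gamma$-invariant $K$-subspace with $R^n$ to obtain a $\Gamma$-invariant $R$-direct summand (using that $R$ is a PID so the quotient is torsion-free, a point the paper asserts but you justify more carefully), then invoke the preceding proposition and iterate. The only cosmetic difference is that the paper always chooses an \emph{irreducible} $K$-subspace $S_i$ at each stage and accumulates $M_1\oplus\cdots\oplus M_i$ linearly, so only the complement side ever needs further decomposition, whereas you split into two arbitrary pieces and recurse on both.
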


\section{Background}

We assume the reader has a basic knowledge of buildings and we will briefly discuss the definition and some results that are useful here. The definitions and results in this chapter can also be found in \cite{AB08}. 

Let $(W,S)$ be a Coxeter system.

\begin{defs} A \emph{building of type} $(W,S)$ is a pair $(\mathcal{C},\delta)$ consisting of a nonempty set $\mathcal{C}$, of elements called \emph{chamber}, and a map
 $\delta:\mathcal{C}\times\mathcal{C}\rightarrow W$ called the \emph{Weyl-distance function}, such that for all $C,D\in \mathcal{C}$, the following conditions hold:
\begin{enumerate}
\item $\delta(C,D)=1$ if and only if $C=D$.
\item If $\delta(C,D)=w$ and $C'\in\mathcal{C}$ satisfies $\delta(C',C)=s\in S$ then $\delta(C',D)$ is $sw$ or $w$. If in addition $l(sw)=l(w)+1$, then $\delta(C',D)=sw$ where $l$ is the length function on $W$ with respect to $S$.
\item If $\delta(C,D)=w$ then for any $s\in S$ there is a chamber $C\in \mathcal{C}$ such that $\delta(C',C)=s$ and $\delta(C',D)=sw$.
\end{enumerate}\end{defs}

If $w=s_1s_2\cdots s_n$ in reduced form, then the \emph{length} of $w$ is $l(w)=n$. If $\delta(C,D)=w$, then the distance from $C$ to $D$ is $d(C,D):=l(w)$.

Let $J\subseteq S$ and let $W_J=\left<J\right>\leq W$. Two chambers $C,D$ in $\C$ are said to be $J$-equivalent if $\delta(C,D)\in W_J$. This is an equivalence relation and the equivalence classes are called $J$-residues. A subset $\mathcal{R}\subseteq \C$ is a \emph{residue} if it is a $J$-residue for some $J\subseteq S$ and $J$ is called the \emph{type} of $\mathcal{R}$, $S\setminus J$ is called the \emph{cotype} and $|J|$ is the \emph{rank}. A residue $\mathcal{R}$ is said to be \emph{spherical} if it is a $J$-residue for some $J$ such that $W_J$ is finite.

The above definition of a building is equivalent to the simplicial definition of a building (which is denoted by $\Delta$) and the residues of $\C$ correspond to the simplices of $\Delta$. The chambers of $\Delta$ correspond to the residues of type $\emptyset$ which are the chambers of $\C$, the simplices of codimension 1 (also called panels) correspond to the residues of type $\{s\}$ for $s\in S$, and the vertices correspond to residues of rank $|S|-1$.  In the simplicial building $\Delta$ we say that the type of a simplex is $S\setminus J$ where $J$ is the type of the corresponding residue, hence the type of a simplex in $\Delta$ is the cotype of the corresponding residue in  $\C$. So the vertices of $\Delta$ have type $\{s\}$ for $s\in S$ (note that each chamber of $\Delta$ contains exactly one vertex of type $\{s\}$ for each $s\in S$).

For $J\subseteq S$, every $J$-residues is isomorphic to a building of type $(W_J,J)$ and if $W_J$ is finite the $J$-residue and the corresponding simplex are said to be \emph{spherical}. 

An important property of spherical buildings is the existence of opposites. Let $\Sigma$ be an apartment of a spherical building of type $(W,S)$. Then there is a unique element of longest length in $W$, denoted $w_0$. If $C,C'$ are chambers of $\Sigma$ such that $\delta(C,C')=w_0$ then we say that $C$ and $C'$ are opposite. This induces an isometry on $\Sigma$ called the opposition involution which maps each chamber to its opposite in $\Sigma$. If $E$ is the geometric realization of $\Sigma$ then the opposition involution is defined on all the simplices of $E$, and for any simplex $A$ of $E$ the opposite of $A$ is $-A:=$op$_EA$. Note that if $A$ is a vertex of $E$ then $-A$ is the vertex which is diametrically opposite $A$.

We will work primarily with the simplicial building and its geometric realization but the Weyl distance definition best generalizes to twin buildings.

\subsection{Twin Buildings}

\begin{defs} A \emph{twin building} of type $(W,S)$ is a triple $(\mathcal{C}_+,\mathcal{C}_-,\delta^*)$ where $(\mathcal{C}_+,\delta_+)$ and $(\mathcal{C}_-,\delta_-)$ are buildings of type $(W,S)$ and $\delta^*:(\mathcal{C}_+\times\mathcal{C}_-) \cup (\mathcal{C}_-\times\mathcal{C}_+)$ is a \emph{codistance} function satisfying the following conditions for each $\epsilon\in\{+,-\}$, any $C\in\mathcal{C}_\epsilon$, and any $D\in\mathcal{C}_{-\epsilon}$ with $w:=\delta^*(C,D)$.
\begin{enumerate}
\item $\delta^*(C,D)=\delta^*(D,C)^{-1}$.
\item If $C'\in\mathcal{C}_\epsilon$ such that $\delta_{\epsilon}(C',C)=s\in S$ and $l(sw)<l(w)$ then $\delta^*(C',D)=sw$.
\item For any $s\in S$ there is a chamber $C'\in \mathcal{C}$ with $\delta_\epsilon(C',C)=s$ and $\delta^*(C',D)=sw$.
\end{enumerate}\end{defs}

For nonspherical buildings there is no element of maximal length so there is no notion of opposition, but in a twin building $\mathcal{C}=(\mathcal{C}_+,\mathcal{C}_-)$ we can say two chambers $C,D$ are \emph{opposite} if $\delta^*(C,D)=1$. We define the \emph{numerical codistance} between chambers by $d^*(C,D)=l(\delta^*(C,D))$. Then two chambers are opposite if and only if $d^*(C,D)=0$. 

\subsubsection{Projections and Convexity}

Assume that $\mathcal{C}=(\mathcal{C}_+,\mathcal{C}_-)$ is a twin building of type $(W,S)$. It is known that if $\mathcal{R}$ is a spherical residue of $\mathcal{C}_\epsilon$ and $D$ is a chamber of $\mathcal{C}_{-\epsilon}$ then there is a unique chamber $C_1\in \mathcal{R}$ such that $\delta^*(C_1,D)$ has maximal length in $\delta^*(\mathcal{R},D):=\{\delta^*(C,D)|C\in \mathcal{R}\}$. This chamber is called the \emph{projection} of $D$ onto $\mathcal{R}$ and is denoted by proj$_{\mathcal{R}}D$.  This chamber $C_1$ also satisfies the following equality for all $C\in \mathcal{R}$ \[\delta^*(C,D)=\delta_\epsilon(C,C_1)\delta^*(C_1,D)\] which gives the following analogue of the gate property:
\[d^*(C,D)=d^*(C_1,D)-d(C,C_1).\]  

Since residues correspond to simplices, the projection of a chamber $D\in \mathcal{C}_{-\epsilon}$  onto a spherical simplex $A\in\mathcal{C}_\epsilon$ is the unique chamber containing $A$ with maximal codistance from $D$.

A pair $(\mathcal{M}_+,\mathcal{M}_-)$ of nonempty subsets of $\mathcal{C}_+$ and $\mathcal{C}_-$ respectively is called \emph{convex} if proj$_\mathcal{P}C\in\mathcal{M}_+\cup\mathcal{M}_-$ for any $C\in\mathcal{M}_+\cup\mathcal{M}_-$ and any panel $\mathcal{P}$ that meets $\mathcal{M}_+\cup\mathcal{M}_-$. This is equivalent to saying that $(\mathcal{M}_+,\mathcal{M}_-)$ is closed under projections. Given two subsets $\mathcal{D}_1$ and $\mathcal{D}_2$ of $\C$, let Con$(\mathcal{D}_1,\mathcal{D}_2)$ denote the convex hull of $\mathcal{D}_1$ and $\mathcal{D}_2$. We will explore convexity in more detail in Chapter 3.

\subsubsection{Twin Apartments}

Consider a pair $(\Sigma_+,\Sigma_-)$ of nonempty subsets of a twin building $\mathcal{C}=(\mathcal{C}_+,\mathcal{C}_-)$ with $\Sigma_+$ an apartment of $\mathcal{C}_+$ and $\Sigma_-$ an apartment of $\mathcal{C}_-$, then $(\Sigma_+,\Sigma_-)$ is a \emph{twin apartment} of $\mathcal{C}$ if every chamber of $\Sigma_+\cup\Sigma_-$ is opposite to exactly one chamber of $\Sigma_+\cup\Sigma_-$. Then the \emph{opposition involution} op$_\Sigma$ associates to each chamber $C\in\Sigma_+\cup\Sigma_-$ its unique opposite in $\Sigma_+\cup\Sigma_-$. A twin apartment $\Sigma$ is the convex hull of any pair of opposite chambers contained in $\Sigma_+\cup\Sigma_-$ and such a pair $(C,C')$ of opposite chambers is called a \emph{fundamental pair} of chambers for $\Sigma$. The following lemma (5.173 in [AB08]) is useful throughout this paper.

\begin{lem}Let $\Sigma=(\Sigma_+,\Sigma_-)$ be a twin apartment and let $\epsilon=+$ or $-$.
\begin{enumerate}
\item op$_\Sigma:\Sigma_\epsilon\rightarrow\Sigma_{-\epsilon}$ is an isomorphism.
\item Given $C\in \Sigma_\epsilon$ and $D'\in\Sigma_{-\epsilon}$, let $D=$op$_\Sigma D'$. Then $\delta^*(C,D')=\delta_\epsilon(C,D)$.
\item Let $C,D,E$ be any chambers in $\Sigma_+\cup\Sigma_-$. Then $\delta(C,E)=\delta(C,D)\delta(D,E)$, where $\delta$ is the distance or codistance function which makes sense for each pair of chambers.
\item $\Sigma$ is convex in $\mathcal{C}$.
\end{enumerate}\end{lem}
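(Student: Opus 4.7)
The plan is to first prove (2), which is the technical heart of the lemma, and then to obtain (1), (3), and (4) as consequences.

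For (2), fix $D'\in\Sigma_{-\epsilon}$ and set $D=\mathrm{op}_\Sigma D'\in\Sigma_\epsilon$. I would prove that $\delta^*(C,D')=\delta_\epsilon(C,D)$ for every $C\in\Sigma_\epsilon$ by induction on the numerical codistance $n=d^*(C,D')$. For $n=0$, both $C$ and $D$ are chambers of $\Sigma_+\cup\Sigma_-$ that are opposite to $D'$, and since each chamber of a twin apartment has a unique opposite inside the twin apartment, $C=D$; both sides of the identity then equal $1$. For $n\ge 1$, let $w=\delta^*(C,D')$ and choose $s\in S$ with $l(sw)<l(w)$. Since $\Sigma_\epsilon$ is an apartment of $\C_\epsilon$, the $s$-panel of $C$ meets $\Sigma_\epsilon$ in exactly two chambers; let $C'\in\Sigma_\epsilon$ be the one distinct from $C$. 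Axiom (2) of the twin-building definition, applied to $(C',C,D')$, gives $\delta^*(C',D')=sw$, so $d^*(C',D')=l(sw)<n$, and the induction hypothesis supplies $\delta_\epsilon(C',D)=sw$. Axiom (2) of the ordinary building definition applied inside $\C_\epsilon$ to the pair $(C',C)$ (with $l(s\cdot sw)=l(w)=l(sw)+1$) then forces $\delta_\epsilon(C,D)=s\cdot sw=w=\delta^*(C,D')$, completing the induction.

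Parts (1) and (3) are straightforward consequences of (2). For (1), $\mathrm{op}_\Sigma$ is a bijection $\Sigma_\epsilon\to\Sigma_{-\epsilon}$ by the defining property of a twin apartment; and if $C,C'\in\Sigma_\epsilon$ satisfy $\delta_\epsilon(C,C')=s$, then taking $D'=\mathrm{op}_\Sigma C'$ in (2) yields $\delta^*(C,\mathrm{op}_\Sigma C')=s$, whence a symmetric application of (2) with the roles of $\epsilon$ and $-\epsilon$ swapped identifies $\delta_{-\epsilon}(\mathrm{op}_\Sigma C,\mathrm{op}_\Sigma C')=s$, so $\mathrm{op}_\Sigma$ is a type-preserving chamber-system isomorphism. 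For (3), the case where $C,D,E$ all lie on the same side is the concatenation identity in a single apartment; the mixed cases reduce to this by applying (2) to convert codistances into distances in one apartment. For instance, if $C,E\in\Sigma_\epsilon$ and $D\in\Sigma_{-\epsilon}$, set $D_+=\mathrm{op}_\Sigma D$; then (2), together with axiom (1) of twin buildings, gives $\delta^*(C,D)=\delta_\epsilon(C,D_+)$ and $\delta^*(D,E)=\delta_\epsilon(D_+,E)$, and concatenation in $\Sigma_\epsilon$ finishes the case.

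For (4), I would check closure under projections onto panels $\mathcal{P}$ that meet $\Sigma$. If $\mathcal{P}$ and the chamber $E$ being projected lie on the same side, closure follows from the convexity of the single apartment $\Sigma_\epsilon$ inside $\C_\epsilon$. If $\mathcal{P}\subseteq\C_\epsilon$ and $E\in\Sigma_{-\epsilon}$, then $\mathcal{P}\cap\Sigma_\epsilon$ consists of two chambers $C_1,C_2$; by (2) their codistances to $E$ equal their distances to $\mathrm{op}_\Sigma E\in\Sigma_\epsilon$, so one of these codistances is $v$ and the other $sv$ for some $v\in W$ and $s$ the type of $\mathcal{P}$. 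Since $\mathrm{proj}_\mathcal{P}E$ is by definition the unique chamber of $\mathcal{P}$ with codistance of maximal length, it must be the chamber among $C_1,C_2$ with the longer codistance, which lies in $\Sigma_\epsilon$ as required. The chief obstacle lies in the inductive step for (2): the twin-building axiom (3) alone would produce some chamber of $\C_\epsilon$ realizing the desired codistance to $D'$, with no guarantee of membership in $\Sigma_\epsilon$. Selecting the $s$-adjacent chamber inside $\Sigma_\epsilon$ at the outset and invoking axiom (2) of the twin-building definition to compute its codistance sidesteps this issue and makes the induction self-contained.
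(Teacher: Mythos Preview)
The paper does not prove this lemma; it merely quotes it as Lemma~5.173 from \cite{AB08}. So there is no ``paper's own proof'' to compare against, and your proposal must stand on its own.

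Your argument is essentially correct. The induction for (2) is clean and the deductions of (1) and (3) from (2) are fine. One point in (4) deserves a sentence more of justification: you write that ``$\mathrm{proj}_{\mathcal{P}}E$ \ldots\ must be the chamber among $C_1,C_2$ with the longer codistance,'' but as written this only shows which of $C_1,C_2$ is the better candidate, not that the projection actually lies in $\{C_1,C_2\}$ rather than being some third chamber of $\mathcal{P}$ outside $\Sigma_\epsilon$. The missing observation is that on a panel the codistance to $E$ takes exactly two values (by the gate property, if $C_0=\mathrm{proj}_{\mathcal P}E$ has codistance $w_0$ then every other chamber of $\mathcal P$ has codistance $sw_0$); since $C_1$ and $C_2$ realize two \emph{distinct} values, one of them must realize the maximal value $w_0$ and hence \emph{is} the projection. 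With that sentence added, (4) is complete.
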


\subsubsection{Twin Roots}

Given a twin apartment $\Sigma=(\Sigma_+,\Sigma_-)$ of a twin building $\mathcal{C}=(\C_+,\C_-)$, the pair $\alpha=(\alpha_+,\alpha_-)$ with $\alpha_\epsilon$ a root of $\Sigma_\epsilon$ for $\epsilon=\pm$ is a \emph{twin root} if op$_\Sigma(\alpha)=-\alpha=(-\alpha_+,-\alpha_-)$, where $-\alpha_\epsilon=\mbox{op}_\Sigma(\alpha_\epsilon)$. 

Consider a pair of adjacent chambers $C,D\in\Sigma_+$ and let $\alpha_+$ be the root of $\Sigma_+$ containing $C$ but not $D$. Let $C'=\mbox{op}_\Sigma C$ and $D'=\mbox{op}_\Sigma D$ (note that $C'$ and $D'$ are adjacent chambers of $\Sigma_-$) and let $\alpha_-$ be the root of $\Sigma_-$ containing $D'$ but not $C'$. Then $\alpha=(\alpha_+,\alpha_-)$ is a twin root of $\Sigma$ and is the convex hull of $C$ and $D'$. The following lemma (5.198 in \cite{AB08}) is very useful. Denote by $\mathcal{A}(\alpha)$ the set of apartments of $\C$ which contain $\alpha$.

\begin{lem}\label{twnewapart} Let $\alpha=(\alpha_+,\alpha_-)$ be a twin root, and let $\mathcal{P}$ be a panel in $\C_\epsilon$ which contains exactly one chamber $C$ of $\alpha$ for $\epsilon=\pm$. Then there is a bijection $\mathcal{P}\setminus\{C\}\rightarrow \mathcal{A}(\alpha)$ that assigns to each $D\in\mathcal{P}\setminus \{C\}$ the convex hull of $D$ and $\alpha$.\end{lem}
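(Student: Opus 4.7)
The plan is to define $\phi(D) = \mathrm{Con}(D,\alpha)$ and check separately that $\phi$ is well-defined (its image lies in $\mathcal{A}(\alpha)$), injective, and surjective. The bulk of the work is showing that $\mathrm{Con}(D,\alpha)$ really is a twin apartment containing $\alpha$; the other two properties will be essentially bookkeeping about panels intersecting apartments in exactly two chambers.

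For well-definedness, I would exploit the fact that a twin apartment is the convex hull of any opposite pair of chambers it contains, so it suffices to produce a chamber $D^\ast \in \C_{-\epsilon}$ opposite $D$ such that $\mathrm{Con}(D, D^\ast) \supseteq \alpha$. Fix an auxiliary twin apartment $\Sigma^{(0)}$ containing $\alpha$ (which exists because $\alpha$ is a convex hull of a fundamental pair) and let $D_0$ be the unique chamber of $\mathcal{P} \cap \Sigma^{(0)}_\epsilon$ other than $C$. If $D = D_0$, take $D^\ast = \mathrm{op}_{\Sigma^{(0)}}(D)$ and we are done. Otherwise I would first apply the single-building analogue of the lemma to the root $\alpha_\epsilon$ and the panel $\mathcal{P}$, obtaining an apartment $\Sigma_\epsilon = \mathrm{Con}(\alpha_\epsilon \cup \{D\})$ of $\C_\epsilon$ decomposed as $\alpha_\epsilon \cup (-\alpha_\epsilon)$ with $D \in -\alpha_\epsilon$; then I would define $D^\ast \in \C_{-\epsilon}$ to be the unique chamber characterized by the codistance identities $\delta^\ast(E, D^\ast) = \delta_\epsilon(E, D)$ for all $E \in \alpha_\epsilon$. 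Existence and uniqueness come from the twin-building axioms: axiom (3) supplies chambers adjacent to already-fixed opposites, and axiom (2) propagates codistance data from the fundamental pair outward by induction on $l(\delta_\epsilon(E,D))$. Setting $\Sigma := \mathrm{Con}(D, D^\ast)$, Lemma 1 identifies $\Sigma$ as a twin apartment containing $\alpha$, and minimality of convex hulls yields $\Sigma = \mathrm{Con}(D,\alpha)$.

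Injectivity is then immediate: if $\mathrm{Con}(D_1,\alpha) = \mathrm{Con}(D_2,\alpha) = \Sigma'$, then $D_1, D_2 \in \mathcal{P} \cap \Sigma'_\epsilon$, a two-chamber set one of whose chambers is $C \in \alpha_\epsilon$, forcing $D_1 = D_2$. For surjectivity, any $\Sigma' \in \mathcal{A}(\alpha)$ has $\mathcal{P} \cap \Sigma'_\epsilon = \{C, D\}$ for a unique $D \in \mathcal{P} \setminus \{C\}$; convexity of $\Sigma'$ gives $\mathrm{Con}(D,\alpha) \subseteq \Sigma'$, with equality because $\mathrm{Con}(D,\alpha)$ is itself already a twin apartment and so cannot be properly contained in another.

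The main obstacle is the construction of $D^\ast$: verifying that the prescribed codistances $\delta^\ast(E,D^\ast) = \delta_\epsilon(E,D)$ are internally consistent as $E$ ranges over $\alpha_\epsilon$ and really determine a single chamber. The twin-root hypothesis on $\alpha$ is essential here, since it guarantees that the codistance restricted to $\alpha_\epsilon \times \alpha_{-\epsilon}$ is already the one from an apartment and fits together with the single extra adjacency $\delta_\epsilon(C,D) = s$ under axioms (2) and (3) without conflict; this is the consistency check that carries the weight of the proof.
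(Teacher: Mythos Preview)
The paper does not prove this lemma; it is quoted without proof as Lemma 5.198 of \cite{AB08}. So there is no paper argument to compare against, and your proposal must stand on its own.

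Your injectivity and surjectivity arguments are fine. The gap is exactly where you flag it: the construction of $D^\ast$. Appealing to axioms (2) and (3) to ``propagate codistance data'' and build a chamber $D^\ast$ with $\delta^\ast(E,D^\ast)=\delta_\epsilon(E,D)$ for \emph{all} $E\in\alpha_\epsilon$ is not something those axioms give you directly; axiom (3) produces one neighbour at a time on the $\epsilon$-side, and nothing in your sketch explains why the choices can be made coherently to pin down a single chamber on the $-\epsilon$-side. As written this is a promissory note, not a proof.

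The missing observation is that $D^\ast$ is already sitting inside $\alpha_{-\epsilon}$, and is the \emph{same} chamber for every $D\in\mathcal{P}\setminus\{C\}$. Concretely: in your auxiliary twin apartment $\Sigma^{(0)}$, set $D':=\mathrm{op}_{\Sigma^{(0)}}(D_0)$. Since $D_0\notin\alpha_\epsilon$, one has $D'\in\alpha_{-\epsilon}$, and by Lemma~1(2) $\delta^\ast(C,D')=\delta_\epsilon(C,D_0)=s$, the type of $\mathcal P$. Now for any $D\in\mathcal P\setminus\{C\}$ one has $\delta_\epsilon(D,C)=s$ and $l(s\cdot s)<l(s)$, so axiom (Tw2) forces $\delta^\ast(D,D')=1$. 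Thus $D$ is opposite $D'$, and $\mathrm{Con}(D,D')$ is a twin apartment. Since $C=\mathrm{proj}_{\mathcal P}D'$ lies in $\mathrm{Con}(D,D')$ and $\alpha=\mathrm{Con}(C,D')$, you get $\alpha\subseteq\mathrm{Con}(D,D')$, hence $\mathrm{Con}(D,\alpha)=\mathrm{Con}(D,D')$. This replaces your inductive construction with a two-line codistance computation and also removes the unnecessary case split $D=D_0$ versus $D\neq D_0$.
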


Given a simplex $A$ in a twin apartment $\Sigma$ we say that $A$ is a \emph{boundary simplex} of a twin root $\alpha\in\Sigma$ if there are chambers $C$ and $D$ having $A$ as a face such that $C\in\alpha$ and $D\not\in\alpha$. Then the above lemma says that if $P$ is a codimension 1 boundary simplex of a twin root $\alpha$ and if $D$ is any chamber not in $\alpha$ which has $P$ as a face, then there is a twin apartment containing $D$ and $\alpha$.

\subsection{Simplicial Approach}

Let $\C=(\C_+,\C_-)$ be a twin building, for $\epsilon=\pm$, let $\Delta_\epsilon$ be the simplicial building associated to $\C_\epsilon$, and $\Delta=(\Delta_+,\Delta_-)$. Let $X_\epsilon=|\Delta_\epsilon|$, the geometric realization of $\Delta_\epsilon$, and $X=(X_+,X_-)$. 

These are three equivalent views towards twin buildings and we will use the notations interchangeably throughout this paper.

\subsubsection{Sign Sequences}

Let $\Sigma$ be a Coxeter complex and let $\mathcal{H}$ be the complete set of walls of $\Sigma$. Each wall $H$ defines a pair of roots $\pm\alpha$ of $\Sigma$. Each simplex $A$ of $\Sigma$ is either in $+\alpha$, $-\alpha$ or $H$. We can assign a sign $\sigma_H(A)\in\{+,-,0\}$ where $\sigma_H(A)=0$ if and only if $A\in H$. The \emph{support} of $A$ is the intersection of walls $H$ such that $\sigma_H(A)=0$ (note that $A$ has the same dimension as its support, Proposition 3.99 in \cite{AB08}). The \emph{sign sequence} is defined as $\sigma(A)=\{\sigma_{H}(A)\}_{H\in\mathcal{H}}$.

Let $\Sigma=(\Sigma_+,\Sigma_-)$ be a twin apartment with geometric realization $E=(E_+,E_-)$. A \emph{twin wall} is a pair $H=(H_+,H_-)$ of walls in $E_+$ and $E_-$ respectively such that $H_-=$op$_\Sigma H_+$. If $\sigma_H(A)$ is the sign of a simplex $A$ with respect to the wall $H$ then $\sigma_H($op$_\Sigma A)=-\sigma_H(A)$.

\section{Convexity}

Convex subcomplexes of a single apartment are well understood and there are several equivalent definitions including being an intersection of roots, closed under products/projections, and closed under straight line segments in the corresponding Tits cone.  A subcomplex of building is \emph{convex} if its intersection with every apartment is convex in the apartment.

Convex subcomplexes of twin buildings are not as well understood.  There is one main definition in the literature to date, namely: a subcomplex of a twin building is \emph{convex} if it is closed under projections (within each building and between the two buildings). Proposition 5.193 of \cite{AB08} says that if the subcomplex contains a chamber then being closed under projections is equivalent to the subcomplex being an intersection of roots.  We show that this is also true if the subcomplex does not necessarily contain any chambers but does contain a sufficient number of spherical simplices.

\subsection{Projections}\begin{defs} Given simplices $A$ and $B$ of a building $\mathcal{C}$ the \emph{product}, $AB$, is defined as the simplex with sign sequence given by 
	\[ \sigma_H(AB) = \left\{ \begin{array}{ll}
         \sigma_H(A) & \mbox{if } \sigma_H(A)\neq 0;\\
        \sigma_H(B) & \mbox{if } \sigma_H(A)=0.\end{array} \right. \]  where $H$ ranges over the set of walls in an apartment containing $A$ and $B$. This product is also called the \emph{projection} of $B$ onto $A$ and denoted proj$_AB$. \end{defs}
        
\begin{defs} Given a twin building $\mathcal{C}=(\mathcal{C}_+,\mathcal{C}_-)$ let $A\in\mathcal{C}_{\epsilon}$ be a spherical simplex,  $B\in\mathcal{C}_{-\epsilon}$ be any simplex and $C\in\mathcal{C}_{-\epsilon}$ be any chamber containing $B$. Then proj$_AC$ is the unique chamber having $A$ as a face which has maximal codistance to $C$ and proj$_AB=\bigcap\mbox{proj}_AC$ where $C$ ranges over all chambers having $B$ as a face. \end{defs}

We can also characterize the projection of $B$ onto $A$ in a twin building in terms of sign sequences. We will need the following lemma. This is Proposition 4 in \cite{Ab96} and the proof uses the $W$-metric approach. We restate it in terms of simplices and give a simplicial proof. Note that $E_A$ is the link of $A$ which is the simplical building of the corresponding residue of $A$.

\begin{lem}\label{linkop} Let $E=(E_+,E_-)$ be a twin apartment and let $A\in E_\epsilon$ and $B\in E_{-\epsilon}$ be simplices  with $A$ spherical.  Let $E_A$ be the corresponding apartment in the link of $A$. Then \[\mbox{proj}_AB\;\;\mbox{op}_{E_A}\;\;\mbox{proj}_A(\mbox{op}_E B)\].\end{lem}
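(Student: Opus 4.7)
The plan is to first settle the chamber case and then to reduce the general case to it via the intersection characterization of the projection.

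\textbf{Chamber case.} Suppose $B=C$ is a chamber of $E_{-\epsilon}$, and let $F$ be any chamber of $E_\epsilon$ containing $A$. Part 2 of the twin apartment lemma gives $\delta^*(F,C)=\delta_\epsilon(F,\mbox{op}_E C)$, so $l(\delta^*(F,C))=d_\epsilon(F,\mbox{op}_E C)$. Hence the chamber $\mbox{proj}_A C$, which by definition maximizes $l(\delta^*(\cdot,C))$ among chambers containing $A$, is also the chamber containing $A$ that maximizes $d_\epsilon(\cdot,\mbox{op}_E C)$. The gate property in the single apartment $E_\epsilon$ decomposes $d_\epsilon(F,\mbox{op}_E C)=d_\epsilon(F,\mbox{proj}_A(\mbox{op}_E C))+d_\epsilon(\mbox{proj}_A(\mbox{op}_E C),\mbox{op}_E C)$, and the second summand is independent of $F$. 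Since both $F$ and $\mbox{proj}_A(\mbox{op}_E C)$ contain $A$, the first summand is a distance in the link $E_A$, which is a spherical Coxeter complex because $A$ is spherical. The unique chamber of $E_A$ at maximum distance from a given chamber is its opposite, and so $\mbox{proj}_A C=\mbox{op}_{E_A}(\mbox{proj}_A(\mbox{op}_E C))$, as desired.

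\textbf{General case.} For arbitrary $B\in E_{-\epsilon}$, apply the definition $\mbox{proj}_A B=\bigcap_{C\supset B}\mbox{proj}_A C$, with the intersection running over chambers $C$ of $E_{-\epsilon}$ containing $B$. Using the chamber case together with the fact that $\mbox{op}_{E_A}$ is a simplicial isomorphism, and hence commutes with finite intersections, yields $\mbox{proj}_A B=\mbox{op}_{E_A}\bigl(\bigcap_{C\supset B}\mbox{proj}_A(\mbox{op}_E C)\bigr)$. Since $C\mapsto\mbox{op}_E C$ is a bijection between chambers of $E_{-\epsilon}$ containing $B$ and chambers of $E_\epsilon$ containing $\mbox{op}_E B$, the analogous intersection formula for the simplicial product in the single apartment $E_\epsilon$ (immediate from the sign-sequence definition of $\mbox{proj}_A$) identifies the inner intersection with $\mbox{proj}_A(\mbox{op}_E B)$, giving exactly the opposition asserted by the lemma.

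\textbf{Main obstacle.} The substantive step is the chamber case: the key move is to convert the codistance maximization that defines $\mbox{proj}_A C$ into an ordinary distance problem so that the gate property applies. The twin apartment lemma furnishes exactly this conversion via $\delta^*(F,C)=\delta_\epsilon(F,\mbox{op}_E C)$, after which sphericity of $E_A$ forces the opposition relationship. The extension to arbitrary $B$ is then a formal manipulation of intersections and the bijection induced by $\mbox{op}_E$.
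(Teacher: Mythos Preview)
Your proof is correct and follows essentially the same approach as the paper: convert the codistance maximization into an ordinary distance problem via $d^*(F,C)=d_\epsilon(F,\mbox{op}_E C)$, apply the gate property in $E_\epsilon$ to identify $\mbox{proj}_A C$ as the chamber of $E_A$ farthest from $\mbox{proj}_A(\mbox{op}_E C)$, and then pass to general $B$ by intersecting over chambers. Your explicit separation into the chamber case and the intersection reduction is in fact a bit cleaner than the paper's more compressed version.
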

  \begin{proof} By definition, $\mbox{proj}_AB=\bigcap_{D\geq B}C$ where $D$ runs over all chambers having $B$ as a face and $C$ is a chamber such that $d^*(C,D)= \max\{d^*(C',D)| C'\geq A\}$ and $\mbox{proj}_A(\mbox{op}_E B)= \bigcap_{D\geq (\mbox{\tiny{op}}_E B)} C$ where $D$ runs over all chambers having $\mbox{op}_E B$ as a face and $C$ is a chamber such that $d(C,D)=\min\{d(C',D)|C'\geq A\}$. Note that $d^*(C',D)=d(C',\mbox{op}_E D)$.
  
  Let $D$ be a chamber having $B$ as a face and let $C_1=\mbox{proj}_AD$ and $C_2=\mbox{proj}_A(\mbox{op}_EB)$. Then $d^*(C_1,D)=d(C_1,\mbox{op}_ED)$ is maximal among distances $d^*(C,D)$ with $C\geq A$ and $d(C_2,\mbox{op}_ED)$ is minimal among distances $d(C,\mbox{op}_ED)$ with $C\geq A$. Hence $d(C_2,C_1)=d(C_1,\mbox{op}_ED)-d(C_2,\mbox{op}_E D)$ is maximal in the link of $A$. So $C_1$ is opposite $C_2$ in $L_A$.  Therefore, $\mbox{proj}_AB\;\;\mbox{op}_{E_A}\;\;\mbox{proj}_A(\mbox{op}_EB)$.
  \end{proof}

\begin{prop}\label{sign} Let $E=(E_+,E_-)$ be a twin apartment. Given simplices $A\in E_\epsilon$ and $B\in E_{-\epsilon}$ with $A$ spherical the sign sequence of proj$_AB$ is \[ \sigma_H(\mbox{proj}_AB) = \left\{ \begin{array}{ll}
         \sigma_H(A) & \mbox{if } \sigma_H(A)\neq 0;\\
        \sigma_H(B) & \mbox{if } \sigma_H(A)=0.\end{array} \right. \] where $H$ ranges over the twin walls of $E$. \end{prop}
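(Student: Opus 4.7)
The plan is to split the argument into two cases based on whether $\sigma_H(A)=0$ or not, handling the easy case first and then reducing the hard case to the single-apartment projection formula (which is essentially built into the definition of the product in Definition 3.1) via Lemma \ref{linkop}.

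First I would set $C := \operatorname{proj}_A B$ and recall that $C$ is by definition a chamber of $E_\epsilon$ having $A$ as a face. For any twin wall $H=(H_+,H_-)$ with $\sigma_H(A)\neq 0$, the wall $H_\epsilon$ does not contain $A$, so $A$ lies in one of the open half-apartments determined by $H_\epsilon$; since $A$ is a face of $C$, the chamber $C$ must lie in the same half-apartment as $A$, giving $\sigma_H(C)=\sigma_H(A)$ immediately. This handles the first case of the formula.

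For the remaining case $\sigma_H(A)=0$, the wall $H_\epsilon$ contains $A$ and therefore restricts to a wall of the link $E_A$. By Lemma \ref{linkop}, the chambers $C=\operatorname{proj}_A B$ and $C':=\operatorname{proj}_A(\operatorname{op}_E B)$ are opposite in the spherical apartment $E_A$, so their signs with respect to any wall of $E_A$ (in particular $H_\epsilon$) are negatives of each other, yielding $\sigma_H(C)=-\sigma_H(C')$. Now $A$ and $\operatorname{op}_E B$ both lie in the single apartment $E_\epsilon$, so $C'=\operatorname{proj}_A(\operatorname{op}_E B)$ is the ordinary projection within $E_\epsilon$, whose sign sequence is given by Definition 3.1; in particular, since $\sigma_H(A)=0$, we get $\sigma_H(C')=\sigma_H(\operatorname{op}_E B)$. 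Combining this with the identity $\sigma_H(\operatorname{op}_E B)=-\sigma_H(B)$ recorded at the end of Section 2.2.1 gives $\sigma_H(C)=-(-\sigma_H(B))=\sigma_H(B)$, as required.

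I do not expect a major obstacle here: the two ingredients (Lemma \ref{linkop} and the sign-flip under $\operatorname{op}_E$) do essentially all the work, and the single-apartment formula is available because reflecting $B$ through $\operatorname{op}_E$ reduces us to the familiar projection inside $E_\epsilon$. The only point requiring care is bookkeeping with the twin-wall sign convention, namely verifying that $\sigma_H$ restricted to $E_\epsilon$ agrees with $\sigma_{H_\epsilon}$ and that opposite chambers in the link of a spherical simplex really do have opposite signs with respect to every wall of the link passing through the simplex — both of which are immediate from the definitions in Section 2.2.1.
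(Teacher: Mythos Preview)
Your proof is correct and follows essentially the same route as the paper's: both use Lemma~\ref{linkop} to relate $\operatorname{proj}_AB$ to the single-apartment projection $\operatorname{proj}_A(\operatorname{op}_E B)$, apply the sign-sequence formula of Definition~3 to the latter, and then flip signs via the link opposition together with $\sigma_H(\operatorname{op}_E B)=-\sigma_H(B)$; the only difference is organizational (you split into the cases $\sigma_H(A)\neq 0$ and $\sigma_H(A)=0$ first, whereas the paper computes the full sign sequence of $\operatorname{proj}_A(\operatorname{op}_E B)$ and then applies $\operatorname{op}_{E_A}$ uniformly). One small correction: $\operatorname{proj}_A B$ is in general only a simplex having $A$ as a face, not a chamber of $E_\epsilon$ (it is defined as an intersection of chambers $\operatorname{proj}_A D$), though this does not affect your argument since any simplex with $A$ as a face still satisfies $\sigma_H(\operatorname{proj}_AB)=\sigma_H(A)$ whenever $\sigma_H(A)\neq 0$.
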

        
\begin{proof} By Lemma \ref{linkop}, we know that proj$_AB=$op$_{E_A}(\mbox{proj}_A(\mbox{op}_EB)$. We also know that $\sigma_H(\mbox{op}_E B)=-\sigma_H(B)$. So we have the sign sequence
              \[\sigma_H(\mbox{proj}_A(\mbox{op}B))= \left\{ \begin{array}{ll}
                \sigma_H(A) & \mbox{if } \sigma_H(A)\neq 0;\\
                \sigma_H(\mbox{op $B$}) & \mbox{if } \sigma_H(A)=0\end{array} \right. \]
              \[ = \left\{ \begin{array}{ll}
                \sigma_H(A) & \mbox{if }\sigma_H(A)\neq 0;\\
                -\sigma_H(B) & \mbox{if }\sigma_H(A)=0. \end{array} \right. \]
                
Since the walls of $E_A$ correspond bijectively to the walls of $E$ containing $A$, the opposition involution op$_{E_A}$ negates only the signs corresponding to the walls containing $A$. Therefore, 
							\[\sigma_H(\mbox{proj}_AB) = \left\{\begin{array}{ll}
							  \sigma_H(A) & \mbox{if $\sigma_H(A)\neq 0$};\\
                \sigma_H(B) & \mbox{if $\sigma_H(A)=0$}.\end{array} \right. \]
                
\end{proof}

\subsection{Twin Tits cone}

Let $\Sigma=(\Sigma_+,\Sigma_-)$ be a twin apartment of type $(W,S)$, where $W$ is infinite and irreducible. The chambers of $\Sigma_+$ correspond to simplicial cones in a real vector space and the union of these cells $X_+$ is called the \emph{Tits cone} of $\Sigma_+$ as in section 2.6 of \cite{AB08}.  The subset $X_+$ of $V$ is a convex subset of $V$ and since $W$ is infinite $X_+\neq V$.  Let $X_-=-X_+$. So $X_-$ is a Tits cone representation of $\Sigma_-$ and $X_+\cap X_-=0$. We define the \emph{twin Tits cone} as $X=X_+\cup X_-$. Let $C$ be a chamber of $\Sigma_+$, and abusing notation also the corresponding simplicial cone in $X_+$. The simplicial cone $-C$ corresponds to the chamber of $\Sigma_+$ which is opposite $C$. Then in $X$, $-wC=w\mbox{op}_{\Sigma}C$ and two chambers, $D$ and $D'$, in $X$ are said to be \emph{opposite in $X$} if $D'=-D$. 

\begin{figure}\begin{center}\includegraphics[scale=.4]{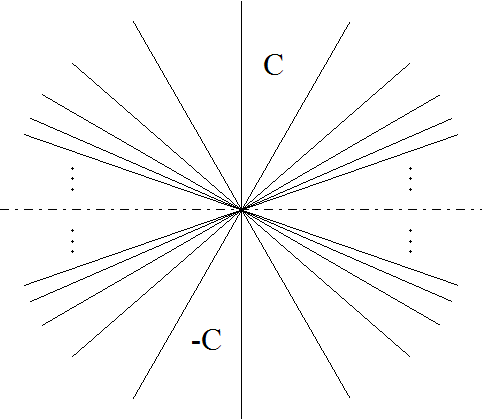}\caption{\label{titscone}The twin Tits cone for $D_\infty$}\end{center}\end{figure}

\begin{ex} Let $W=D_\infty=\left<s,t|s^2=t^2=1\right>$. The Tits cone corresponding to $W$ is the open upper half plane of $\R^2$ plus the origin and the twin Tits cone is $\R^2$ not inculding the ponits $(x,0)$ for $x\neq 0$ as in Figure \ref{titscone}. \end{ex}

\begin{prop} Two chambers $D\in \Sigma_+$ and $D'\in\Sigma_-$ are opposite in $\Sigma$ if and only if their corresponding chambers in $X$ are opposite.\end{prop}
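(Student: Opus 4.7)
The plan is to recast both notions of opposition in terms of sign sequences and check that they coincide. By part (2) of the Lemma cited earlier as (5.173 in \cite{AB08}), applied with ``$C$'' in the statement taken to be our $D$, we have $\delta^*(D,D') = \delta_+(D,\mbox{op}_\Sigma D')$, and this numerical codistance equals $1$ (i.e., $D$ and $D'$ are opposite in $\Sigma$) precisely when $D = \mbox{op}_\Sigma D'$, equivalently $D' = \mbox{op}_\Sigma D$ by part (1). Hence the proposition reduces to showing that the combinatorial equality $D' = \mbox{op}_\Sigma D$ in $\Sigma$ is equivalent to the geometric equality $D' = -D$ in $X$.

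The crux is that in the Tits cone realization every twin wall $H=(H_+,H_-)$ is represented by a single linear hyperplane through the origin in the ambient real vector space $V$: walls of $X_+$ are fixed sets of reflections in the linear $W$-action, and the construction $X_- = -X_+$ reflects the same hyperplanes back into $V$. The map $x\mapsto -x$ on $V$ therefore flips which side of every such hyperplane a point lies on, so the cone $-D\subset X_-$ satisfies $\sigma_H(-D) = -\sigma_H(D)$ for every twin wall $H$. On the combinatorial side, the remark at the end of the Sign Sequences subsection gives $\sigma_H(\mbox{op}_\Sigma D) = -\sigma_H(D)$ as well.

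Since chambers in an apartment are uniquely determined by their full sign sequences, the chamber $\mbox{op}_\Sigma D\in\Sigma_-$ and the cone $-D\subset X_-$ must correspond under the identification of $\Sigma_-$ with the cell decomposition of $X_-$. This yields $D' = \mbox{op}_\Sigma D$ if and only if $D' = -D$, which is the desired equivalence. The one subtle point is verifying that the identification of chambers of $\Sigma_-$ with simplicial cones in $X_-$ is the one that respects sign sequences; but this is implicit in the very definition $X_- := -X_+$, and is exactly the content of the relation $-wC = w\,\mbox{op}_\Sigma C$ recorded in the paper just before the proposition.
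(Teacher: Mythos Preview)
Your argument is correct, but it takes a different route from the paper's proof. The paper fixes a fundamental pair $(C,C')$ and works with Weyl distances: using part (3) of Lemma 5.173 it shows that $\delta^*(D,D')=1$ forces $\delta_+(C,D)=\delta_-(C',D')=:w$, whence $D=wC$ and $D'=wC'=-wC=-D$ in $X$; the converse reverses this. Your reduction via part (2) of the same lemma to the statement ``$D'=\mbox{op}_\Sigma D$ in $\Sigma$ iff $D'=-D$ in $X$'' is cleaner, and at that point the relation $-wC = w\,\mbox{op}_\Sigma C$ recorded just before the proposition already gives the result directly: if $D=wC$ then $\mbox{op}_\Sigma D = w\,\mbox{op}_\Sigma C$, which is by definition the chamber of $\Sigma_-$ identified with the cone $-wC=-D$. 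The intermediate sign-sequence computation is therefore not needed---it is correct, but the ``subtle point'' you flag (that the identification $\Sigma_-\leftrightarrow X_-$ respects signs) is precisely what you end up justifying via $-wC = w\,\mbox{op}_\Sigma C$ anyway, so you might as well invoke that relation immediately and skip the detour.
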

  \begin{proof} Let $(C,C')$ be a fundamental pair of $\Sigma$, and abusing notation, also the fundamental pair of the twin Tits cone $X$. Assume that $D$ and $D'$ are opposite in $\Sigma$, hence $\delta^*(D,D')=1$. From Lemma 5.173 in \cite{AB08}, we get \[\delta_+(C,D)=\delta^*(C,C')\delta_-(C',D')\delta^*(D',D)=\delta_-(C',D').\] Let $w=\delta_+(C,D)=\delta_-(C',D')$. Hence in the Tits cone $D'=wC'=-wC=-D$ and so $D$ and $D'$ are opposite in $X$.
  
  Conversely, suppose $D'=-D$ in $X$. Let $w=\delta_+(C,D)$, then in $X$, $D=wC$ so $D'=-wC=wC'$. Then we have that $\delta_-(C',D')=w$ hence \[\delta^*(D,D')=\delta_+(D,C)\delta^*(C,C')=\delta_-(C',D').\] So $\delta^*(D,D')=w^{-1}w=1$.
  \end{proof}
  
Since the twin Tits cone $X$ is not convex in $V$, a slightly different definition of a convex subset of $X$ is needed.

\begin{defs} A union of cells $Y$ contained in $X$ is \emph{convex} in $X$ if for any two points $x,y\in Y$, $[x,y]\cap X\subseteq Y$, where $[x,y]$ is the straight line connecting $x$ and $y$ in $V$. \end{defs}

\begin{figure}\begin{center}\includegraphics[scale=.4]{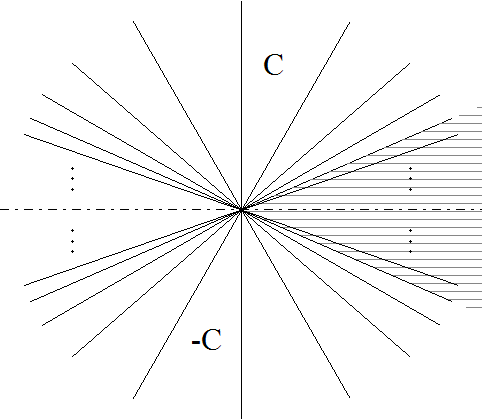}\caption{\label{titscone2}A convex subset of the twin Tits cone for $D_\infty$}\end{center}\end{figure}

\begin{ex}For $W=D_\infty=\left<s,t|s^2=t^2=1\right>$ with twin Tits cone $X$. The shaded region minus the dotted line in Figure \ref{titscone2}is convex in $X$.\end{ex}

\subsection{Convexity in a Twin Apartment}

Given a simplicial complex $\Delta$ of finite dimension, we say that $\Delta$ is a \emph{chamber complex} if all maximal simplices have the same dimension and can be connected by a gallery. Any building, and any apartment in a building is a chamber complex. Also, any convex subcomplex $\Sigma'$ of an apartment $\Sigma$ is a chamber complex (though the chambers of $\Sigma'$ may not be chambers of $\Sigma$)(Proposition 3.136 of \cite{AB08}).

We need the following lemma which guarantees a certain number of spherical simplices given at least one of maximal dimension.

\begin{lem}\label{sphcha} Let $\Sigma$ be a Coxeter complex and let $\Sigma'$ be a convex subcomplex of $\Sigma$ which contains at least one spherical simplex. Then every $\Sigma'$-chamber is spherical.\end{lem}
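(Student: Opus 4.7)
The plan is to show that every $\Sigma'$-chamber has the same pointwise stabilizer in $W$, and that this common stabilizer is finite; since a simplex is spherical precisely when its stabilizer in $W$ is finite, this will give the lemma.

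First I introduce the candidate common support
$$L := \bigcap \{ H \in \mathcal{H} : \sigma_H(A) = 0 \text{ for every } A \in \Sigma'\},$$
the intersection of the walls of $\Sigma$ that contain every simplex of $\Sigma'$. Tautologically every $A \in \Sigma'$ satisfies $\mbox{supp}(A) \subseteq L$. The key geometric assertion is that this inclusion is an \emph{equality} whenever $A$ is a $\Sigma'$-chamber $C$. Indeed, suppose some wall $H$ has $\sigma_H(C) = 0$ while $\sigma_H(A) \neq 0$ for some $A \in \Sigma'$. Then by the sign-sequence formula for the product, $CA$ agrees with $C$ on every wall where $\sigma_{H'}(C) \neq 0$ and satisfies $\sigma_H(CA) = \sigma_H(A) \neq 0 = \sigma_H(C)$, so $C$ is a proper face of $CA$. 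Since convexity of $\Sigma'$ means closure under projections (which coincide with products inside a single Coxeter complex), this places $CA \in \Sigma'$, contradicting the maximality of $C$.

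Once $\mbox{supp}(C) = L$ holds for every $\Sigma'$-chamber $C$, the walls of $\Sigma$ containing $C$ are exactly those containing $L$, a set that does not depend on $C$. Hence the subgroup $W_L \leq W$ generated by reflections in these walls is simultaneously the pointwise stabilizer of every $\Sigma'$-chamber. To force $W_L$ to be finite I use the fact recorded just before the lemma that $\Sigma'$ is a chamber complex: the spherical simplex $B$ is then a face of some $\Sigma'$-chamber $C_0$, and every wall through $C_0$ passes through $B$, so $W_L = \mbox{stab}(C_0) \subseteq \mbox{stab}(B)$, which is finite because $B$ is spherical. Therefore every $\Sigma'$-chamber has finite stabilizer and is spherical.

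I expect the main obstacle to be the equality $\mbox{supp}(C) = L$ for a $\Sigma'$-chamber $C$: this is precisely the step where one must combine convexity (to promote $C$ to $CA$) with the maximality of $C$ in $\Sigma'$ (to derive the contradiction) in order to force every $\Sigma'$-chamber to share the same support. Everything after this point is formal bookkeeping that converts a shared support into a shared stabilizer and then into simultaneous sphericity.
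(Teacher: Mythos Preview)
Your proof is correct and uses essentially the same mechanism as the paper: the sign-sequence formula for the product $CA$, combined with the maximality of a $\Sigma'$-chamber $C$, forces the walls through $C$ to be only those containing all of $\Sigma'$ (equivalently, those through the spherical $\Sigma'$-chamber having the given spherical simplex as a face). The paper phrases this as computing $BA$ for a spherical $\Sigma'$-chamber $A$ and an arbitrary $\Sigma'$-chamber $B$, observing $BA=B$ by maximality, and then reading off directly from the sign sequence that $\sigma_H(BA)=0$ for only finitely many $H$; you instead introduce the common support $L$, pass through stabilizers, and use $W_L\subseteq\mathrm{stab}(B)$, but the underlying argument is the same.
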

  \begin{proof} Since $\Sigma'$ is convex and contains a spherical simplex $C$, it must contain a spherical $\Sigma'$-chamber $A$ which has $C$ as a face. Now assume that there is a $\Sigma'$-chamber $B$ which is not spherical and consider $BA=$proj$_BA$. Since $A$ and $B$ both have maximal dimension in $\Sigma'$ and $\Sigma'$ is closed under projections, we must have that $BA$ has maximal dimension in $\Sigma'$, hence $BA=B$. Consider the sign sequence definition of projection:
  \[ \sigma_H(BA) = \left\{ \begin{array}{ll}
         \sigma_H(B) & \mbox{if $\sigma_H(B)\neq 0$};\\
        \sigma_H(A) & \mbox{if $\sigma_H(B)=0$}.\end{array} \right. \] 
Then $BA$ is spherical if and only if $\sigma_H(BA)=0$ for finitely many $H$.  Since $A$ is spherical, $\sigma_H(A)=0$ for finitely many $H$, hence $\sigma_H(BA)=0$ for finitely many $H$, so $BA=B$ is spherical which is a contradiction, so $B$ does not have maximal dimension in $\Sigma'$.\end{proof}

This brings us to our main result, giving several equivalent definitions of convexity in a twin apartment.

\begin{thm}\label{convex}Let $\Sigma'=(\Sigma'_+,\Sigma'_-)$ be a pair of nonempty subcomplexes of a twin apartment $\Sigma$ such that $\Sigma'_+$ and $\Sigma'_-$ each contain a spherical simplex. Then the following are equivalent:
  \begin{enumerate}
    \item $\Sigma'$ is convex in $\Sigma$, i.e. closed under projections.
    \item $\Sigma'$ is an intersection of twin roots.
    \item Let $X'$ be the union of the cells corresponding to $\Sigma'$ in the twin Tits cone $X$. Then $X'$ is convex in $X$.
  \end{enumerate}
\end{thm}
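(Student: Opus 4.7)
I would prove the three-way equivalence via the cycle $(2) \Rightarrow (1) \Rightarrow (3) \Rightarrow (2)$, leaning on Proposition \ref{sign} as the main computational tool and on Lemma \ref{sphcha} to guarantee that every $\Sigma'$-chamber is spherical, so that projections between simplices of $\Sigma'$ are always defined.

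For $(2) \Rightarrow (1)$, it suffices to show that a single twin root $\alpha$ is closed under projections, since this property passes to intersections. Let $H$ be the twin wall bounding $\alpha$, and let $A \in \alpha$ be spherical with $B \in \alpha$ arbitrary. By Proposition \ref{sign}, $\sigma_H(\mbox{proj}_A B)$ equals $\sigma_H(A)$ when $\sigma_H(A) \neq 0$ and $\sigma_H(B)$ otherwise; in both cases the value lies in the set $\{+,0\}$ of signs defining $\alpha$, so $\mbox{proj}_A B \in \alpha$. For $(1) \Rightarrow (3)$, pick $x$ in the relative interior of $A \in \Sigma'$ and $y$ in the relative interior of $B \in \Sigma'$. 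A direct sign-sequence check shows that for small $t > 0$ the point $x + t(y - x)$ lies in the simplex $AB = \mbox{proj}_A B$; iterating this argument at each crossing shows that the cells of $X$ traversed by $[x,y] \cap X$ are obtained by a finite sequence of projections among simplices of $\Sigma'$, hence by (1) all lie in $\Sigma'$, so $[x,y] \cap X \subseteq X'$.

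The real content is $(3) \Rightarrow (2)$. Let $\Sigma''$ be the intersection of all twin roots of $\Sigma$ containing $\Sigma'$; the inclusion $\Sigma' \subseteq \Sigma''$ is immediate. For the converse, suppose $B \in \Sigma$ is not in $\Sigma'$. Choose a $\Sigma'$-chamber $C$, which is spherical by Lemma \ref{sphcha}, pick interior points $x \in C$ and $z \in B$, and let $H$ be the first twin wall that the segment $[x,z] \cap X$ crosses. Let $\alpha$ be the twin root containing $C$ bounded by $H$; then $B \notin \alpha$ by construction. If some $A \in \Sigma'$ were strictly on the opposite side of $H$ from $C$, then picking a point in the relative interior of $A$ and joining it to $x$ by a straight segment in $X$ would, by the convexity (3) of $X'$, force a cell of $X'$ onto the opposite side of $H$, contradicting the choice of $H$ as the first wall crossed. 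Hence $\Sigma' \subseteq \alpha$ and $B \notin \Sigma''$, as required.

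The main obstacle is making the final step watertight when $\Sigma'$ contains no chamber of $\Sigma$ itself. Two things need care. First, I must verify that the half-apartment identified by ``first wall crossed'' really is one side of a \emph{twin} root of $\Sigma$ and not merely a root of one of the two halves; this is where I expect Lemma \ref{twnewapart} to do the work, by exhibiting the required twin apartment extension of the local configuration near $C$. Second, the geometric-to-simplicial translation — matching ``wall crossed by a straight segment in $X$'' with an actual boundary simplex of a twin root — has to be handled carefully for segments that pass from $X_+$ into $X_-$, which is exactly where the modified definition of convexity in $X$ becomes relevant.
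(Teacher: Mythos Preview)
Your cycle $(2)\Rightarrow(1)\Rightarrow(3)\Rightarrow(2)$ runs opposite to the paper's $(1)\Rightarrow(2)\Rightarrow(3)\Rightarrow(1)$, and the step you flag as ``the real content'', namely $(3)\Rightarrow(2)$, has a genuine gap. The separating-wall argument does not work as stated: take $\Sigma'$ to be a single twin root $\alpha$, let $C$ be a $\Sigma$-chamber in $\alpha$ that is \emph{not} adjacent to $\partial\alpha$, and let $B$ lie in $-\alpha$. The first wall $H$ crossed by $[x,z]$ is then a bounding wall of the chamber $C$, not $\partial\alpha$, and $\Sigma'=\alpha$ has simplices strictly on both sides of $H$. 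Your proposed contradiction (``forcing a cell of $X'$ onto the opposite side of $H$'') is no contradiction at all: the assertion ``$H$ is the first wall crossed by $[x,z]$'' is a statement about that one segment and says nothing about whether $X'$ meets the far side of $H$. Moreover, the difficulties you single out are red herrings: in a fixed twin apartment every wall of $\Sigma_\epsilon$ is automatically one half of a twin wall (pair it with its image under $\mbox{op}_\Sigma$), and every root extends to a twin root, so Lemma~\ref{twnewapart} is irrelevant here---that lemma manufactures \emph{new} twin apartments, which is not what you need.

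The paper places the corresponding work at $(1)\Rightarrow(2)$ instead, and the mechanism is different. It first invokes the single-building fact that each $\Sigma'_\epsilon$ is cut out by roots $\alpha_\epsilon$ determined by its \emph{boundary panels} $A$ (spherical because, via Lemma~\ref{sphcha}, they are codimension-one faces of spherical $\Sigma'$-chambers). It then uses projection-closure directly: if some $B\in\Sigma'_{-\epsilon}$ lay outside $\alpha_{-\epsilon}$, then $\mbox{op}_\Sigma B\in\alpha_\epsilon\setminus\partial\alpha_\epsilon$, and Proposition~\ref{sign} (or equivalently Lemma~\ref{linkop}) forces $\mbox{proj}_A B$ strictly into $-\alpha_\epsilon$, contradicting (1). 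This sidesteps any separating-hyperplane search. The paper's $(3)\Rightarrow(1)$ then needs only the single observation you already made---that the \emph{initial} cell along $[x,y]$ from $x\in A$ is $\mbox{proj}_A B$---with no iteration, so the infinitely-many-crossings and face-sphericity issues lurking in your $(1)\Rightarrow(3)$ never arise.
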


\begin{proof} We will prove $(1)\Rightarrow (2) \Rightarrow (3) \Rightarrow (1)$.
    \begin{enumerate}
      \item[$(1) \Rightarrow(2)$] 
      	Let $\mathcal{S}$ be the support of $\Sigma'_\epsilon$. Then $\mathcal{S}$ is a convex subcomplex of $\Sigma_\epsilon$ containing at least one spherical simplex and by Lemma \ref{sphcha}, all $\mathcal{S}$-chambers are also spherical. Also, all $\Sigma'_\epsilon$-chambers are spherical.      	
      
      	We know from Lemma 3.137 in [1] that $\Sigma'_{\epsilon}$ is an intersection of roots $\alpha_\epsilon$, each defined by a boundary panel of $\Sigma'_\epsilon$. This boundary panel, as defined in the proof of the Lemma, is the face of exactly two spherical $\mathcal{S}$-chambers, so it is spherical.  We need to show that $\Sigma'_{-\epsilon}\subseteq\alpha_{-\epsilon}$. We will use contradiction.
          
          Let $B$ be a simplex of $\Sigma'_{-\epsilon}$ and $A$ a boundary panel in  $\Sigma'_{\epsilon}\cap\partial\alpha_\epsilon$. Note that proj$_AB=\bigcap_{D\geq B}\mbox{proj}_AD$. So let $D$ be any chamber of $\Sigma$ having $B$ as a face and let $C=\mbox{proj}_AD$.
          
          Now assume that $B\not\in\alpha_{-\epsilon}$. Then op $B\in\alpha_{\epsilon}\setminus\partial\alpha_\epsilon$ and op $D\in \alpha_\epsilon$. Hence $C=\mbox{proj}_AD\in -\alpha_{\epsilon}$. Since this holds for all chambers having $B$ as a face we must have that proj$_AB\in -\alpha_\epsilon\setminus\partial\alpha_\epsilon$ which is a contradiction. Therefore, $\Sigma'$ is the intersection of twin roots with $\Sigma'\subset\alpha$ and $\Sigma'\cap\partial\alpha\neq\emptyset$.
          
      \item[$(2)\Rightarrow (3)$] It is enough to show that twin roots in $\Sigma$ correspond to half-spaces in $X$. Then an intersection of twin roots in $\Sigma$ corresponds to an intersection of half-spaces in $X$, which is a convex set. To show this, note that roots in $X_+$ correspond to roots in $\Sigma_+$. So for a given $\alpha_+\subset\Sigma_+$ and corresponding $(\alpha_X)_+\subset X_+$ it suffices to show that $\alpha_-\subset\Sigma_-$corresponds to $(\alpha_X)_-\subset X_-$. This follows from the fact that opposition is preserved:
          \[\alpha_-=\mbox{op}_\Sigma(-\alpha_+)\leftrightarrow\mbox{op}_X(-(\alpha_X)_+)=(\alpha_X)_-.\]
          
      \item[$(3)\Rightarrow (1)$] Given $A,B\in \Sigma'$ with $A$ spherical, we want to show proj$_AB\in \Sigma'$. We may assume $A\in\Sigma'_+$ and $B\in\Sigma'_-$. Let $x$ be a point in the interior of $A$ and $y$ a point in the interior of $B$, with $A$ and $B$ viewed as cells of $X$. Let $y'=\mbox{op}_Xy$.
          
          Let $l_1$ be the segment of the line $[x,y]$ starting at $x$ and having length $\epsilon$. Let $C$ be the cell of minimal dimension containing $l_1$. We claim that proj$_AB=C$. Then since $X'$ is convex, any cell meeting [x,y] in its interior is in $X'$. Hence proj$_AB$ is in $\Sigma'$.
          
          To prove the claim, first note that $D=\mbox{proj}_A(\mbox{op }B)$ corresponds to the cell containing a segment of $[y',x]$ starting at $x$ and having length $\epsilon$. In the link of $A$, the cell opposite $D$ corresponds to the cell containing an extension of $[y',x]$ starting at $x$ and having length $\epsilon$; call this extension $l_2$. Let $C'$ be the cell of minimal dimension containing $l_2$. By Lemma \ref{linkop}, $C'=\mbox{proj}_AB$. It remains to show that $C=C'$. This amounts to showing that $l_1$ and $l_2$ are not separated by any hyperplane of $X$. For any hyperplane $H$ of $X$ there are three cases to consider: $A\not\in H$, $A\in H$ and $B\not\in H$, and $A,B\in H$.
          
          First, assume $A\not\in H$. Then there is some positive distance between $x$ and $H$. Since $\epsilon$ is arbitrarily small, $l_1$ and $l_2$ are not separated by $H$. Second, assume that $A\in H$ and $B\not\in H$. Then by definition, $l_1$ is on the same side of $H$ as $B$ and $l_2$ is on the opposite side of $H$ as op $B$. Hence $l_2$ is on the same side of $H$ as $B$ and $l_1$. Thirdly, assume $A,B\in H$. Then op $B\in H$ so $l_1$ and $l_2$ are in $H$.

    \end{enumerate}
  \end{proof}
  
\subsection{``Coconvexity''}

In \cite{Ab96}, P. Abramenko discusses a notion of ``coconvexity'' which is defined as closure under projections, but only those projections between the two components of the twin building, not projections within each component. In that book P. Abramenko states without proof the following proposition which we prove here.

\begin{prop} Let $A\in\Sigma_+$, $B\in\Sigma_-$ be spherical simplices. Then the coconvex hull of $A$ and $B$, Con$^*(A,B)$, is the intersection of all twin roots containing $A$ and $B$.\end{prop}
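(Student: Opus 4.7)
The plan is to prove the two set inclusions separately. For the easier direction, $\mbox{Con}^*(A,B)\subseteq\bigcap_{\alpha\ni A,B}\alpha$, I would fix an arbitrary twin root $\alpha$ containing $A$ and $B$, with defining twin wall $H$, and verify that $\alpha$ is closed under the two operations generating $\mbox{Con}^*(A,B)$: closure under faces is automatic since $\alpha$ is a subcomplex, and closure under cross-component projections follows from Proposition \ref{sign}, which gives $\sigma_H(\mbox{proj}_X Y)\in\{\sigma_H(X),\sigma_H(Y)\}$ for $X\in\alpha_\epsilon$ spherical and $Y\in\alpha_{-\epsilon}$; both candidate values lie in $\{+,0\}$ when $X,Y\in\alpha$, so $\mbox{proj}_X Y\in\alpha$. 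Hence $\alpha\supseteq\mbox{Con}^*(A,B)$, and intersecting over all such $\alpha$ yields this inclusion.

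For the reverse inclusion, my strategy is to show that $\mbox{Con}^*(A,B)$ is convex in the sense of Theorem \ref{convex}, and then invoke that theorem to express $\mbox{Con}^*(A,B)$ as an intersection of twin roots. Each such twin root must contain $A$ and $B$ (since $\mbox{Con}^*(A,B)$ does), so $\mbox{Con}^*(A,B)$ will contain the intersection of all twin roots through $\{A,B\}$. By construction $\mbox{Con}^*(A,B)$ is closed under cross-component projections and under faces, so the only remaining step is closure under projections within a single component: given $A_1,A_2\in\mbox{Con}^*(A,B)\cap\Sigma_\epsilon$, one needs $\mbox{proj}_{A_1}A_2\in\mbox{Con}^*(A,B)$. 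Both $A_1$ and $A_2$ are spherical, since by the formula of Proposition \ref{sign} the sign $\sigma_H$ vanishes on a projection only when it vanishes on both inputs, so sphericality is preserved by cross-projection and by face-passage; in particular, $\mbox{proj}_{A_1}A_2$ is defined.

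This within-component closure is the main obstacle. My approach is to track sign sequences via Proposition \ref{sign} and to realize $\mbox{proj}_{A_1}A_2$ through a detour through the opposite component: set $B_\ast:=\mbox{proj}_B A_2\in\mbox{Con}^*(A,B)\cap\Sigma_{-\epsilon}$ and form the cross-projection $\mbox{proj}_{A_1}B_\ast\in\mbox{Con}^*(A,B)$. Its sign sequence agrees with that of $\mbox{proj}_{A_1}A_2$ at every wall except possibly those $H$ where $\sigma_H(A_1)=0$ and $\sigma_H(A_2),\sigma_H(B)$ carry opposing nonzero signs; at such walls I would eliminate the discrepancy by passing to a face supported at those walls, or by iterating the detour through a different intermediate element of $\mbox{Con}^*(A,B)$ (for instance, $\mbox{proj}_B(\mbox{proj}_{A_1}B_\ast)$ followed by another cross-projection). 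As a cleaner fallback, one can prove directly that $\mbox{Con}^*(A,B)$ is Tits-cone convex in $X$ and conclude via $(3)\Rightarrow(1)\Rightarrow(2)$ of Theorem \ref{convex}, using the description of cross-projection as the cell containing an infinitesimal segment from the proof of $(3)\Rightarrow(1)$.
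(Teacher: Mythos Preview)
Your argument for the inclusion $\mbox{Con}^*(A,B)\subseteq\bigcap_{\alpha\ni A,B}\alpha$ via Proposition~\ref{sign} is correct and in fact cleaner than the paper's, which routes through $\mbox{Con}^*(A,B)\subseteq\mbox{Con}(A,B)$ and Theorem~\ref{convex}.

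The reverse inclusion, however, has a real gap. Your plan is to show that $\mbox{Con}^*(A,B)$ is closed under \emph{within}-component projections and then invoke Theorem~\ref{convex}. The detour $\mbox{proj}_{A_1}(\mbox{proj}_B A_2)$ does not achieve this at a wall $H$ with $\sigma_H(A_1)=0$, $\sigma_H(B)\neq 0$, and $\sigma_H(A_2)=-\sigma_H(B)$ (such walls exist: necessarily $\sigma_H(A)=-\sigma_H(B)$, so neither root of $H$ contains both $A$ and $B$). At such $H$ you need the sign $\sigma_H(A_2)=-\sigma_H(B)$, but any cross-projection whose target has nonzero $H$-sign inherits that sign, and the only seed in $\Sigma_{-\epsilon}$ is $B$ with sign $\sigma_H(B)$; iterating through $B$, or through $\mbox{proj}_B(\cdot)$, keeps returning $\sigma_H(B)$. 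Passing to a face can zero a sign out but cannot flip it. So neither of your proposed fixes produces $\mbox{proj}_{A_1}A_2$, and you have not exhibited any element of $\mbox{Con}^*(A,B)\cap\Sigma_{-\epsilon}$ with $H$-sign $-\sigma_H(B)$ to route through. (A side remark: your claim that face-passage preserves sphericality is false in general, so if $A_1$ arises as a face it may be nonspherical and your cross-projection $\mbox{proj}_{A_1}B_\ast$ is then undefined; this is a secondary issue but worth noting.)

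The paper avoids the within-component closure question altogether. It works in the twin Tits cone, passes to the common support $\mathcal{D}$ (the intersection of all hyperplanes containing both $A$ and $B$), and shows by a gallery induction that every $\mathcal{D}$-chamber of $\bigcap_{\alpha\ni A,B}\alpha$ lies in $\mbox{Con}^*(A,B)$. The base case is $C_0=\mbox{proj}^*_A B$, identified by its sign sequence as the unique $\mathcal{D}$-chamber of the intersection having $A$ as a face. For an adjacent chamber $C_1$, the shared panel $D_1=C_0\cap C_1$ is a face of $C_0$, hence already in $\mbox{Con}^*(A,B)$; one then checks, again via sign sequences, that $C_1=\mbox{proj}^*_{D_1}B$, so $C_1\in\mbox{Con}^*(A,B)$. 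Induct on gallery distance from $C_0$. The point is that one only ever cross-projects $B$ onto panels that are faces of chambers already obtained, never needing a within-component projection. Your ``fallback'' of proving Tits-cone convexity directly is in the right spirit, but you would still need some concrete mechanism like this gallery step to carry it out.
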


\begin{proof} Since Con$^*(A,B)$ is contained in the convex hull of $A$ and $B$, Proposition \ref{convex} gives the inclusion $\mbox{Con}^*(A,B)\subseteq \bigcap\{\alpha|\,A,B\in\alpha\}$. Note that twin roots of $\Sigma$ are in one to one correspondence with half-spaces of the twin Tits cone $X$ so we need to show that $\mbox{Con}^*_X(A,B)\supseteq \bigcap\{\mbox{half-spaces containing $A$ and $B$}\}$.  Let $\mathcal{D}$ be the intersection of hyperplanes containing $A$ and $B$. Note that dim$(\mathcal{D})=$dim$(\mbox{Con}^*(A,B))$: since $\mbox{Con}^*(A,B)$ is contained in $ \mathcal{D}$ we know that dim$(\mbox{Con}^*(A,B))\leq $dim$(\mathcal{D})$, and by the sign sequence of proj$^*_AB$ we know that the hyperplanes containing proj$^*_AB$ are exactly those containing both $A$ and $B$ so that dim$(\mbox{proj}_A^*B)=$dim$(\D)$ hence dim$(\mathcal{D})\leq$ dim$(\mbox{Con}^*(A,B))$. Since $X_+$ contains an infinite hyperplane arrangement and $\mathcal{D}_+$ is a convex subcomplex of $X_+$, all the results in [\cite{AB08}, section 2.7] apply to $\mathcal{D}_+$. So for the remainder of the proof, we will be working in $\mathcal{D}$, so by ''chamber'' we will mean $\mathcal{D}$-chamber, etc.

Let $\mathcal{D}_0$ be the intersection of half-spaces of $X_+$ containing $A$ but not op $B$, which is the intersection of $X_+$ with the intersection of the half-spaces of $X$ containing $A$ and $B$. Note that there is only one chamber of $\D_0$ having $A$ as a face: any two chambers containing $A$ are separated by at least one hyperplane $H$, one of these chambers would have to be on the same side of $H$ as op $B$ and therefore would not be in $\D_0$. Consider the sign sequence of $C_0$: since $C_0$ has $A$ as a face, if $\sigma_H(A)\neq 0$ then $\sigma_H(C_0)=\sigma_H(A)$, and since all the hyperplanes containing $A$ separate $C_0$ from op$B$, if $\sigma_H(A)=0$ then $\sigma_H(C_0)=-\sigma_H(\mbox{op } B)$ which is the same sign sequence as proj$_A^*B$ from Lemma \ref{sign}. Hence, $C_0=$proj$_A^*B$.

Now let $C_1$ be in $\D_0$ with distance 1 from $C_0$. Let $D_1=C_0\cap C_1$. Since $C_1\in\D_0$, $|\Se(\mbox{op} B, C_1)|\geq |\Se(\mbox{op }B,C_0)|$, and since $C_0$ is the only chamber of $\D_0$ containing $A$, we have strict inequality. Hence the hyperplane defined by $D_1$ separates op $B$ from $C_1$ so $C_1\in\D_1$ which is defined to be the intersection of half-spaces containing $D_1$ and $B$, and $C_1$ is the only chamber of $\D_1$ containing $D_1$. By the above argument $C_1=$proj$^*_{D_1}B$, hence $C_1\in\mbox{Con}^*(A,B)$.

We continue by inducting on the distance from $C_0$. Assume that all chambers of distance less than $n$ from $C_0$ are in $\mbox{Con}^*(A,B)$. Let $C_n$ be a chamber of $\D_0$ of distance $n$ from $C_0$. Then $C_n$ is adjacent to a chamber $C_{n-1}$ which is in $\mbox{Con}^*(A,B)$ and $C_{n-1}=$proj$^*_{D_{n-1}}B$ for some $D_{n-1}$. Let $\D_{n-1}$ be the intersection of halfspaces containing $D_{n-1}$ and $B$. If, in the above proof that $C_1\in\mbox{Con}^*(A,B)$, we make the following identifications: \[\D_{n-1}\longrightarrow \D_0\] \[ D_{n-1}\longrightarrow A\] \[C_{n-1}\longrightarrow C_0\] \[C_n\longrightarrow C_1\] we get $C_n\in\mbox{Con}^*(D_{n-1},B)\subset \mbox{Con}^*(A,B)$ because $D_{n-1}\in\mbox{Con}^*(A,B)$.

\end{proof}

The next example shows that being closed only under projections between the two components does not guarantee convexity in each component of the twin building, leading to the conclusion that we need to require projections within each component in our definition of a convex subcomplex.

\begin{center}\begin{figure}\includegraphics[scale=.5]{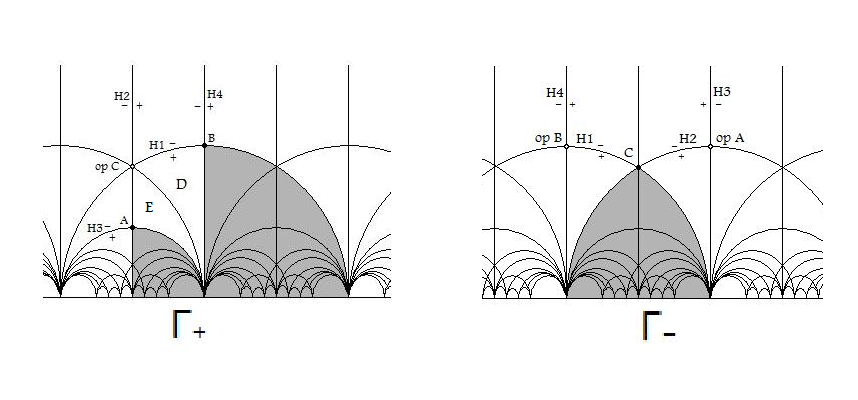}\caption{\label{example}A ``coconvex'' subcomplex which is not convex.}\end{figure}\end{center} 

\begin{ex} Consider the group $W=\left<u,v,w|u^2=v^2 = w^2 = (uv)^3 = (uw)^2\right>$ with generating set $S=\{u,v,w\}$. Then the hyperbolic planes, $\Sigma=(\Sigma_+,\Sigma_-)$ in Figure \ref{example} form a thin twin building of type $(W,S)$. Let $C$ be a vertex of $\Sigma_-$ of type $\{w\}$ and let $H_1$ and $H_2$ be walls containing $C$. Let $B$ be a spherical simplex of $\Sigma_+$ of type $\{v\}$ which is in $H_1$ such that $B$ and op$_\Sigma C$ are vertices of a common chamber. Similarly, let $A$ be a spherical simplex of $\Sigma_+$ of type $\{v\}$ which is in $H_2$ such that $A$ and op$_\Sigma C$ are vertices of a common chamber. Let $H_3$ be the wall containing $A$ but not op$_\Sigma C$ and let $H_4$ be the wall containing $B$ but not op$_\Sigma C$ (since $A$ and $B$ have type $\{v\}$, their links are isomorphic to a Coxeter complex of type $(W_J,J)$ where $J=\{u,w\}$ and $W_J=\left<u,w|u^2=w^2=(uw)^2=1\right>$ which has exactly two walls). 

The coconvex hull of $A$, $B$, and $C$ is the shaded subcomplex $\Gamma=(\Gamma_+,\Gamma_-)$ in the Figure \ref{example}. Since $\sigma_1(C)=\sigma_1(B)=0$ and $\sigma_1(A)=+$ we know that $\sigma_1(s)\geq 0$ for all $s\in\Gamma$ and similarly $\sigma_2(s)\geq 0$ for all $s\in \Gamma$. Since the roots defined by $H_1$ and $H_3$ are nested and similarly for the roots defined by $H_2$ and $H_4$ we have that $\sigma_3(s)\geq 0$ and $\sigma_4(s)\geq 0$ for all $s\in\Gamma_-$.  The sign sequence for $B$ with respect to these hyperplanes is $\{0\;+\;-\;0\}$ and for $A$ we have $\{+\;0\;0\;-\}$. From what we just said about $\Gamma_-$ we know that these zeros can only be replaced with $+$ hence there is no way to get the sign sequence $\{+\;+\;-\;-\}$ which is the sign sequence for both $D$ and $E$.  Therefore, neither $D$ nor $E$ is in $\Gamma$ and $\Gamma_+$ is not convex.
\end{ex}

\section{Twin Buildings at Infinity}

\subsection{A Single Building at Infinity}

To every Euclidean building we can associate a spherical building by attaching a sphere at infinity to each apartment. This is achieved as follows (see chapter 11 of \cite{AB08}).

Let $E$ be the geometric realization of a Euclidean Coxeter complex of type $(W,S)$ with $\mathcal{H}$ the corresponding set of hyperplanes in $E$. Let $x$ be a point of $E$ and $\overline{\mathcal{H}}$ be the set of hyperplanes through $x$ which are parallel to some element in $\mathcal{H}$. Then $\overline{\mathcal{H}}$ defines a decomposition of $E$ into conical cells, called \emph{conical cells based at x}. If $x$ is a special vertex (every hyperplane of $\mathcal{H}$ is parallel to a hyperplane of $\overline{\mathcal{H}}$) then $\overline{\mathcal{H}}$ is a subset of $\mathcal{H}$ and is isomorphic to the set of hyperplanes corresponding to a Coxeter complex of type $(\overline{W},S)$ where $\overline{W}$ is the finite reflection group consisting of the linear parts of the elements of $W$. 

Let $\mathfrak{D}$ be a cell associated to $\overline{W}$, then for any point $y\in E$ the conical cells based at $E$ are the translates $\mathfrak{A}=y+\mathfrak{D}$, and if $\mathfrak{D}$ is a chamber, then $\mathfrak{A}$ is called a \emph{sector}. Figure \ref{sector} shows a sector based at a vertex $y$. The bold lines in the figure, which are called rays, are also conical cells based at $y$. The vertex $x$ is a special vertex and $y$ is not a special vertex. 

\begin{figure}\begin{center}\includegraphics[scale=.4]{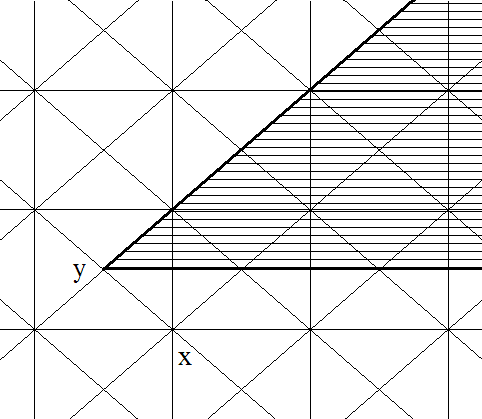}\caption{\label{sector}A sector based at vertex $x$.}\end{center}\end{figure}

Let $X$ be the geometric realization of a Euclidean building of type $(W,S)$. Then the building at infinity, $X^\infty$, is the collection of ends of parallel classes of rays.  The simplices of $X^\infty$ correspond to parallel classes of conical cells and the chambers of $X^\infty$ correspond to parallel classes of sectors.  Two conical cells are parallel if the distance between them is bounded. For sectors this implies that their intersection contains a sector. A sector $\mathfrak{C}'\subseteq\mathfrak{C}$ is called a \emph{subsector} of $\mathfrak{C}$. Note that $X^\infty$ is a spherical building of type $(\overline{W},S)$.

Let $\A=x+\mathfrak{D}$ be a conical cell based at $x$ with direction $\mathfrak{D}$ in an apartment $E$. Let $\mathfrak{D}'$ be the cell associated to $\overline{W}$ which is opposite $\mathfrak{D}$. Define the \emph{reversal} of $\A$ in $E$ as rev$_E\A:=x+\mathfrak{D}'$.  This is equivalent to the definition given in \cite{Ro03}, where rev$_E\A$ is defined as the image of $\A$ under the isometry sending each point of $\A$ to the point diametrically opposite to it with respect to the base point $x$. 

The following lemma is a generalization of exercise 11.50 in \cite{AB08}.

\begin{lem}\label{rootcon} Let $H$ be a wall, $\mathfrak{A}$ a conical cell in an apartment $E$. Then one of the roots of $E$ determined by $H$ contains a conical cell $\mathfrak{A}'$ such that $\mathfrak{A}'\subseteq\mathfrak{A}$ has the same direction as $\mathfrak{A}$.\end{lem}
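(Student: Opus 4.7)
The plan is to split the argument on whether the direction of $\mathfrak{A}$ is parallel to $H$. Write $\mathfrak{A} = y + \mathfrak{D}$, where $\mathfrak{D}$ is a cell in the central arrangement of hyperplanes through the origin that are parallel to walls of $\mathcal{H}$. Let $\bar{H}$ denote the linear hyperplane parallel to $H$. The key preliminary observation is that $\bar{H}$ is itself one of the hyperplanes defining this central arrangement (because $H$ is a wall of $\mathcal{H}$, and the directions of walls form a finite central arrangement associated to $\bar{W}$). Consequently, the open cell $\mathfrak{D}$ cannot cross $\bar{H}$: it either lies in $\bar{H}$ or strictly in one open half-space of $\bar{H}$, which I will denote by $\bar{H}^+$.

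In the parallel case $\mathfrak{D} \subseteq \bar{H}$, the whole conical cell $\mathfrak{A}$ lies in the affine hyperplane $y + \bar{H}$, which is parallel to $H$. Hence $y + \bar{H}$ either coincides with $H$ or is disjoint from $H$, and in either situation $\mathfrak{A}$ is contained in one of the two closed roots defined by $H$. Taking $\mathfrak{A}' = \mathfrak{A}$ finishes this case.

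In the non-parallel case, pick a linear functional $f$ with $H = f^{-1}(c)$ and $\bar{H}^+ = \{f > 0\}$, and choose any nonzero $d \in \mathfrak{D}$. Then $f(td) = t\,f(d) \to \infty$ as $t \to \infty$, so for all sufficiently large $t$ the point $y + td$ lies in the root $\alpha^+ = \{p : f(p) \geq c\}$. I propose to set $\mathfrak{A}' = (y + td) + \mathfrak{D}$. Since $\mathfrak{D}$ is a cone, $td + \mathfrak{D} \subseteq \mathfrak{D}$, giving $\mathfrak{A}' \subseteq \mathfrak{A}$; and for any $d' \in \overline{\mathfrak{D}} \subseteq \overline{\bar{H}^+}$ one has $f(y + td + d') = f(y + td) + f(d') \geq c$, so $\mathfrak{A}' \subseteq \alpha^+$. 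Clearly $\mathfrak{A}'$ has the same direction $\mathfrak{D}$ as $\mathfrak{A}$.

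The main obstacle is the preliminary observation that $\bar{H}$ appears in the central arrangement at every base point, which is what forces the clean dichotomy ``either $\mathfrak{D} \subseteq \bar{H}$ or $\mathfrak{D}$ lies strictly on one side of $\bar{H}$.'' Without this, $\mathfrak{D}$ could in principle straddle $\bar{H}$ and neither case would work. Once this is in place, the two cases reduce to elementary affine geometry, and I do not anticipate further difficulty.
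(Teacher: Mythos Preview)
Your proposal is correct and takes essentially the same approach as the paper. The paper's proof is terser: it simply chooses the sign of the linear functional $f$ defining $H$ so that $f \geq 0$ on $\mathfrak{D}$ (which is exactly your preliminary observation, stated without justification), picks any $x \in \mathfrak{A}$ with $f(x) \geq c$, and sets $\mathfrak{A}' = x + \mathfrak{D}$; your two-case split just makes the parallel case explicit rather than absorbing it into the sign choice.
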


\begin{proof} Let $\mathfrak{D}$ be the direction of $\mathfrak{A}$, and let the wall $H$ be determined by an equation $f=c$.  We may assume that $f\geq 0$ on $\mathfrak{D}$ so that $f(x)\geq c$ for some $x\in\mathfrak{A}$. Then the conical cell $\mathfrak{A}'=x+\mathfrak{D}$ of $\mathfrak{A}$ is contained in the root determined by $f\geq c$. \end{proof}

\subsection{Twin Buildings at Infinity}

Now consider a Euclidean twin building $X=(X_+,X_-)$. 

\subsubsection{Conical Cells and Twin Apartments}

Let $E=(E_+,E_-)$ be a twin apartment, and $\mathfrak{A}=x+\mathfrak{D}$ a conical cell based at $x$ with direction $\mathfrak{D}$ in $E_\epsilon$ for $\epsilon=+$ or $-$. Then op$_E\A$ is a conical cell based at op$_E x$ and the \emph{twin} of $\A$ in $E$, tw$_E\A$, is the reversal of the opposite of $\A$. So tw$_E\A=$rev$_E($op$_E\A)=$op$_E($rev$_E\A)$ (see Figure \ref{twinsector}) . Note that if $\A$ is a sector twinned with $\A'$ and $\mathfrak{C}$ is any sector containing $\A$ then $\A'$ contains a sector $\mathfrak{C}'$ twinned with $\mathfrak{C}$.

The following are generalizations of Proposition 11.62 and Theorem 11.63(1) of \cite{AB08} for twin apartments and general conical cells.

\begin{figure}\begin{center}\includegraphics[scale=.5]{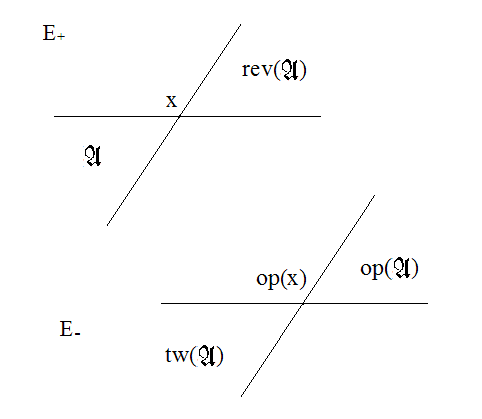}\caption{\label{twinsector}Twin sectors in a twin apartment}\end{center}\end{figure}

\begin{prop}If $\A$ is a conical cell of a twin apartment $E$, then $\A$ is a conical cell of every twin apartment containing it.
\end{prop}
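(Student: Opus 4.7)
The plan is to reduce this to the corresponding statement for a single Euclidean building, namely that a conical cell of an apartment $E_\epsilon$ of $X_\epsilon$ remains a conical cell of every apartment of $X_\epsilon$ containing it. This is Proposition 11.62 together with Theorem 11.63(1) of \cite{AB08}, and it is available to us since conical cells are defined and studied apartment-by-apartment in each individual Euclidean building.

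Recall from the definition just given that a conical cell $\mathfrak{A}$ of a twin apartment $E=(E_+,E_-)$ lies entirely in one of the two components, say $E_\epsilon$, and is by definition a conical cell of the single Euclidean apartment $E_\epsilon$. In particular $\mathfrak{A}\subseteq X_\epsilon$.

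Now let $E'=(E'_+,E'_-)$ be any twin apartment containing $\mathfrak{A}$, in the sense that $\mathfrak{A}\subseteq E'_+\cup E'_-$. Because $X_+$ and $X_-$ are disjoint (they are the realizations of two distinct Euclidean buildings), we have
\[
\mathfrak{A}\;\subseteq\;(E'_+\cup E'_-)\cap X_\epsilon\;=\;E'_\epsilon.
\]
Thus $E_\epsilon$ and $E'_\epsilon$ are two apartments of the single Euclidean building $X_\epsilon$ that both contain the conical cell $\mathfrak{A}$. Applying the single-building result cited above to these two apartments, $\mathfrak{A}$ is a conical cell of $E'_\epsilon$, and therefore by definition a conical cell of the twin apartment $E'$.

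There is essentially no genuine obstacle here: the twin-apartment structure contributes nothing beyond the single-building case, since a conical cell is confined to one component of the twin building. The only point that needs to be stated carefully is that ``contained in $E'$'' forces containment in the same component $E'_\epsilon$, which follows immediately from $X_+\cap X_-=\emptyset$. The proposition really serves to confirm that the terminology ``conical cell of a twin apartment'' is well-behaved, so that the subsequent generalizations of 11.63 in \cite{AB08} can be stated cleanly without ambiguity about which apartment a given cell belongs to.
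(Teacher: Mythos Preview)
Your proposal is correct and takes essentially the same approach as the paper: the paper's own proof simply says ``This proof is the same as that for Proposition 11.62 in \cite{AB08},'' and your argument is precisely the reduction to that single-building statement via the observation that a conical cell lies in one component $E_\epsilon$. The only minor remark is that you need not invoke Theorem~11.63(1) here; Proposition~11.62 alone suffices, and the paper's phrase ``generalizations \ldots\ for twin apartments and general conical cells'' indicates that if 11.62 is stated only for sectors, one repeats its proof verbatim rather than citing it as a black box---but this is a cosmetic distinction.
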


\begin{proof} This proof is the same as that for Proposition 11.62 in \cite{AB08}.\end{proof}

\begin{prop}\label{comapart}Given a conical cell $\A=x+\mathfrak{D}$ in a twin apartment $E_0$ and a simplex $A$ in $X$, there is a twin apartment containing $A$ and a conical cell $\A'\subseteq\A$ having the same direction as $\A$.\end{prop}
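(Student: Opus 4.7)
The plan is to generalize the single-building argument (Theorem 11.63(1) of \cite{AB08}) by threading the twin structure through the inductive construction, using Lemma \ref{twnewapart} at each step. First I would reduce to the case where $A$ is a chamber, since any twin apartment containing a chamber $C$ with $A$ as a face also contains $A$. Let $\sigma \in \{+,-\}$ be the sign with $A \in X_\sigma$ and let $\epsilon$ be the sign with $\A \subset E_{0,\epsilon}$. I would then induct on the gallery distance $n = d(A, E_{0,\sigma})$ in the single building $X_\sigma$ from $A$ to a chamber of $E_{0,\sigma}$. The base case $n = 0$ gives $A \in E_{0,\sigma} \subset E_0$, so $E_0$ itself works with $\A' = \A$.

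For the inductive step I would choose a chamber $A' \in X_\sigma$ adjacent to $A$ along a panel $P$ with $d(A', E_{0,\sigma}) = n - 1$, and apply the inductive hypothesis to $A'$ to obtain a twin apartment $E_1$ containing $A'$ and a conical cell $\A_1 \subseteq \A$ with the same direction as $\A$. If $A \in E_1$ we are done; otherwise $P$ lies in $E_{1,\sigma}$ and determines a wall there, yielding a twin root $\alpha = (\alpha_+,\alpha_-)$ of $E_1$ with $A' \in \alpha_\sigma$. Applying Lemma \ref{rootcon} in the apartment $E_{1,\epsilon}$ to the wall $H_\epsilon$ of $\alpha_\epsilon$ and to the conical cell $\A_1$, I obtain $\A_2 \subseteq \A_1$ of the same direction contained in one of $\pm\alpha_\epsilon$. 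Let $\beta$ be the twin root ($\alpha$ or $-\alpha$) whose $\epsilon$ half contains $\A_2$. The unique chamber of $\beta_\sigma$ having $P$ as a face lies in $E_{1,\sigma}$, whereas $A \notin E_1$, so these are distinct chambers in the panel's star; by Lemma \ref{twnewapart}, $\mbox{Con}(A, \beta)$ is a twin apartment. It contains $A$ and $\A_2 \subseteq \A$, closing the induction.

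The main obstacle is the ambiguity in Lemma \ref{rootcon}: it guarantees only that one of $\pm\alpha_\epsilon$ contains a subcell of the correct direction, without saying which. The resolution is that Lemma \ref{twnewapart} is symmetric in $\alpha$ and $-\alpha$: whichever side of $H_\epsilon$ contains $\A_2$, the corresponding twin root has its unique $P$-chamber inside $E_{1,\sigma}$, and since $A \notin E_1$, $A$ is a legitimate input to the bijection. A subsidiary bookkeeping point is that $\sigma$ need not equal $\epsilon$; the argument is uniform because Lemma \ref{rootcon} is always applied on the $\epsilon$ side where $\A$ lives, Lemma \ref{twnewapart} is always applied at the panel $P$ on the $\sigma$ side, and the two halves are linked through the twin root.
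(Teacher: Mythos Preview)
Your proposal is correct and follows essentially the same strategy as the paper: at each step, use Lemma \ref{rootcon} to push a sub-conical-cell into one half of the twin root determined by the relevant panel, then invoke Lemma \ref{twnewapart} to produce a new twin apartment containing that half together with a chamber one step closer to $A$. The paper organizes this as a direct iteration along a fixed minimal gallery from $A$ to $E_0$ (working from the $E_0$ end inward), whereas you organize it as an induction on $d(A,E_{0,\sigma})$ (working from $A$ outward); the mechanism is identical. Your uniform use of Lemma \ref{twnewapart} for both the same-sign and opposite-sign cases is slightly cleaner than the paper's case split between Exercise 5.83 and Lemma 5.198.
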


\begin{proof} Let $E_0$ be a twin apartment containing $\A$. Consider a minimal gallery from $A$ to $E_0$, $\Gamma:A\leq C_0,\ldots, C_{n-1},C_n$ where all chambers of $\Gamma$ are not in $E_0$ except $C_n$. The chambers $C_{n-1}$ and $C_n$ are in a panel $\mathcal{P}$, defining a wall $H$ of $E_0$.  By Lemma \ref{rootcon}, one of the roots of $E_0$ defined by $H$ contains a conical cell $\A_1=x_1+\mathfrak{D}$ contained in $\A$, call it $\alpha$.  Then $\mathcal{P}$ intersects $\alpha$ in a single chamber $C$, and since $C_{n-1}\not\in E_0$, we have $C_{n-1}\in\mathcal{P}\setminus \{C\}$. If $\A$ and $A$ are both in $X_\epsilon$ then by Exercise 5.83 in [AB08], there is an apartment $E_1$ containing $\alpha$ and $C_{n-1}$. If $\A$ and $A$ are not both in $X_\epsilon$, then by Lemma 5.198 in [AB08] we have the same conclusion. So $E_1$ contains $\A_1$ and $C_{n-1}$. 

In the gallery $\Gamma$, there is an $m<n$ such that $C_m\in E_1$ and $C_{m-1}\not\in E_1$. We can argue as above to find an apartment $E_2$ containing a conical cell $\A_2=x_2+\mathfrak{D}$ of $\A$ and $C_{m-1}$. Continue to construct apartments $E_i$ in this way. Since the distance from $E_{i+1}$ to $A$ is strictly less than the distance from $E_i$ to $A$, we will find an apartment $E$ containing a conical cell $\A'=x'+\mathfrak{D}\subseteq\A$ and $A$ in at most $n$ iterations of this construction. \end{proof}

\subsubsection{Interior Sub-buildings at Infinity}

For a twin Euclidean building $X=(X_+,X_-)$ there are corresponding spherical buildings at infinity $(X_+)^\infty$ and $(X_-)^\infty$. Following \cite{Ro03}, the sectors that lie in a twin apartment of $X$ are called \emph{interior}, and if two sectors are parallel and one is interior so is the other. The chambers of $(X_\pm)^\infty$ that are parallel classes of interior sectors are called \emph{interior chambers} and if $E=(E_+,E_-)$ is a twin apartment, then $(E_+)^\infty$ and $(E_-)^\infty$ are \emph{interior apartments}.  The subcomplexes of $(X_\pm)^\infty$ consisting of interior chambers will be denoted $I_\pm$.   

It is important to note that not every apartment of $I_\epsilon$ is an interior apartment. For example, consider the case where $X=(X_+,X_-)$ is a twin tree. Then $I_\epsilon$ is a disjoint set of points for each $\epsilon=+$ or $-$ and any pair $x,y\in I_\epsilon$ forms an apartment in $I_\epsilon$ and it is known that not every apartment in $X_\epsilon$ is part of a twin apartment in $X$.  

In \cite{Ro03}, it is shown that $I_+$ and $I_-$ are sub-buildings of $(X_+)^\infty$ and $(X_-)^\infty$ and will be called \emph{interior sub-buildings at infinity}. The following are results in \cite{Ro03} which imply that the twinning of sectors mentioned in Section 4.2.1 induces a canonical isomorphism between $I_+$ and $I_-$.

\begin{prop} Let $\mathfrak{C}$ be a sector twinned with sectors $\mathfrak{C}_1$ and $\mathfrak{C}_2$. Then $\mathfrak{C}_1$ and $\mathfrak{C}_2$ are parallel.\end{prop}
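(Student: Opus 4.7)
The plan is to use Proposition \ref{comapart} to transport subsectors of $\mathfrak{C}_1$ and $\mathfrak{C}_2$ into a common twin apartment, and then to exploit the fact that within a single twin apartment the twin operation on sectors preserves direction at infinity. Once both $\mathfrak{C}_1$ and $\mathfrak{C}_2$ are seen to have subsectors in a single apartment sharing the same direction, parallelism follows immediately.

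First, I would establish the following structural fact: for any twin apartment $E=(E_+,E_-)$ and any sector $\mathfrak{A}=x+\mathfrak{D}$ in $E_+$, the twin sector $\mathrm{tw}_E\mathfrak{A}$ has the same direction $\mathfrak{D}$. Unpacking the definition $\mathrm{tw}_E=\mathrm{op}_E\circ\mathrm{rev}_E$: the reversal $\mathrm{rev}_E$ takes $\mathfrak{A}$ to $x-\mathfrak{D}$, and the opposition $\mathrm{op}_E$, realized in the twin Tits cone by $y\mapsto -y$, sends this to $-x+\mathfrak{D}$. So $\mathrm{tw}_E\mathfrak{A}$ is a sector in $E_-$ whose direction agrees with $\mathfrak{A}$'s under the canonical identification of $(E_+)^\infty$ and $(E_-)^\infty$ coming from the opposition.

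Now let $E^1,E^2$ be the twin apartments in which $\mathfrak{C}$ is twinned with $\mathfrak{C}_1$ and $\mathfrak{C}_2$ respectively, and pick a chamber $A$ of $\mathfrak{C}_2$. Applying Proposition \ref{comapart} to the conical cell $\mathfrak{C}_1$ in $E^1$ and the simplex $A$ yields a twin apartment $E$ containing $A$ together with a subsector $\mathfrak{C}'_1\subseteq\mathfrak{C}_1$ of the same direction. By the structural fact, the twin of $\mathfrak{C}'_1$ in $E$ is a sector $\mathfrak{D}\subseteq E_+$ whose direction matches that of $\mathfrak{C}_1$; and the observation following Section 4.2.1 that twinning descends to subsectors, applied to $\mathfrak{C}'_1\subseteq\mathfrak{C}_1$ inside $E^1$, shows that $\mathfrak{D}$ is in fact parallel to (indeed a subsector of) $\mathfrak{C}$. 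A symmetric application, starting from $E^2$ and a chamber of $\mathfrak{C}'_1$, produces a twin apartment containing a subsector $\mathfrak{C}'_2\subseteq\mathfrak{C}_2$ that is twinned with a subsector of $\mathfrak{C}$.

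To close the argument, I would compare the two twinning computations at infinity. Both $\mathfrak{C}'_1$ and $\mathfrak{C}'_2$ are, by construction, twins in their respective twin apartments of subsectors of $\mathfrak{C}$, and the direction-preservation step forces each of them to have the image of the direction of $\mathfrak{C}$ under the opposition on the underlying apartment at infinity. Since the opposition involution on the spherical apartment at infinity is determined entirely by the finite Weyl group $\overline{W}$ and thus is the same in every twin apartment, $\mathfrak{C}'_1$ and $\mathfrak{C}'_2$ determine the same chamber of $(X_-)^\infty$, so $\mathfrak{C}_1\parallel\mathfrak{C}'_1\parallel\mathfrak{C}'_2\parallel\mathfrak{C}_2$. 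The main obstacle is this final comparison across different apartments: one must verify that the twin operations in two different twin apartments agree on a common sector (up to taking subsectors), and this is precisely where the canonicity of opposition at infinity, together with the direction-preservation established in the first paragraph, does the essential work.
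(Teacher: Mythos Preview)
First, note that the paper does not supply its own proof of this proposition: it is quoted as a result of Ronan \cite{Ro03}, so there is no in-paper argument to compare against. What follows is an assessment of correctness.

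Your direction-preservation observation is correct and useful: in the twin Tits cone model of a \emph{single} twin apartment $E$, the map $\mathrm{tw}_E=\mathrm{op}_E\circ\mathrm{rev}_E$ sends $x+\mathfrak{D}$ to $-x+\mathfrak{D}$, so it preserves direction. The difficulty is that this is a statement internal to one twin apartment, and the proposition is precisely about comparing twins taken in two \emph{different} twin apartments.

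The gap appears where you pass from $\mathrm{tw}_{E^1}\mathfrak{C}'_1$ (which is parallel to $\mathfrak{C}$, by your structural fact applied in $E^1$) to $\mathrm{tw}_E\mathfrak{C}'_1$ (the twin in the new apartment $E$ produced by Proposition~\ref{comapart}). You assert that $\mathfrak{D}:=\mathrm{tw}_E\mathfrak{C}'_1$ is parallel to $\mathfrak{C}$, but the only justification offered is that twinning descends to subsectors ``inside $E^1$''. That reasoning takes place in $E^1$, not in $E$; nothing so far forces $\mathrm{tw}_E\mathfrak{C}'_1$ and $\mathrm{tw}_{E^1}\mathfrak{C}'_1$ to be parallel. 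Establishing that they are is exactly an instance of the proposition you are trying to prove.

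The closing appeal to ``canonicity of opposition at infinity'' does not rescue this. Saying the opposition involution on a spherical apartment is determined by $\overline{W}$ only tells you that the abstract type-map is the same; it does not identify the chambers $[\mathfrak{C}_1]$ and $[\mathfrak{C}_2]$ as the \emph{same} chamber of $(X_-)^\infty$, because they lie a priori in different interior apartments $(E^1_-)^\infty$ and $(E^2_-)^\infty$. Matching abstract types across two apartments is not the same as coinciding in the building. In short, the final step presupposes the well-definedness of twinning on parallel classes, which is the content of the proposition itself. A correct argument (as in \cite{Ro03}) has to exploit more than the internal Tits-cone description, for instance an isomorphism $E^1\to E^2$ fixing $E^1\cap E^2\supseteq\mathfrak{C}$ together with control over what this isomorphism does on the negative halves, or a retraction-based comparison that genuinely pins down $[\mathfrak{C}_1]$ and $[\mathfrak{C}_2]$ in $(X_-)^\infty$.
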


\begin{coro} Let $\mathfrak{C}_1$ and $\mathfrak{C}_2$ be parallel sectors, twinned with $\A_1$ and $\A_2$ respectively. Then $\A_1$ and $\A_2$ are parallel.\end{coro}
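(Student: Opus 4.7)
The plan is to find an intermediate sector $\mathfrak{C}_3$ common to $\mathfrak{C}_1$ and $\mathfrak{C}_2$, and then invoke the preceding Proposition to connect $\A_1$ and $\A_2$ through the twins of $\mathfrak{C}_3$. Since $\mathfrak{C}_1 \parallel \mathfrak{C}_2$, their intersection contains a sector; fix a subsector $\mathfrak{C}_3 \subseteq \mathfrak{C}_1 \cap \mathfrak{C}_2$. Without loss of generality say $\mathfrak{C}_1, \mathfrak{C}_2, \mathfrak{C}_3 \in X_+$ and $\A_1, \A_2 \in X_-$. For each $i \in \{1,2\}$ I would choose a twin apartment $E^{(i)}$ containing both $\mathfrak{C}_i$ and its twin $\A_i$; such an apartment exists by the very definition of being twinned. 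Because $\mathfrak{C}_3 \subseteq \mathfrak{C}_i \subseteq E^{(i)}_+$, the sector $\mathfrak{C}_3$ itself lies in the twin apartment $E^{(i)}$, and hence has a well-defined twin in that apartment, namely $\A_3^{(i)} := \mbox{tw}_{E^{(i)}} \mathfrak{C}_3$, a sector of $E^{(i)}_-$.

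The geometric claim I would then establish is that inside any single twin apartment $E$, the twin operation $\mbox{tw}_E$ preserves parallel classes of sectors. This uses the formula $\mbox{tw}_E = \mbox{rev}_E \circ \mbox{op}_E$: the isomorphism $\mbox{op}_E \colon E_+ \to E_-$ sends sectors to sectors while preserving parallelism (two sectors sharing a direction go to two sectors sharing the image direction), and $\mbox{rev}_E$ merely replaces a sector's direction by its opposite without altering the base point. Hence a pair of parallel sectors in $E_+$ is carried to a pair of parallel sectors in $E_-$. Applying this inside $E^{(i)}$ to the obviously parallel pair $\mathfrak{C}_3 \subseteq \mathfrak{C}_i$ (they share the direction $\mathfrak{D}$) yields $\A_3^{(i)} \parallel \A_i$ in $X_-$ for each $i$.

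Finally I would invoke the preceding Proposition: since the single sector $\mathfrak{C}_3$ is twinned with $\A_3^{(1)}$ inside $E^{(1)}$ and with $\A_3^{(2)}$ inside $E^{(2)}$, that Proposition forces $\A_3^{(1)} \parallel \A_3^{(2)}$. Chaining the three parallelisms gives $\A_1 \parallel \A_3^{(1)} \parallel \A_3^{(2)} \parallel \A_2$, and transitivity of parallelism of sectors then yields $\A_1 \parallel \A_2$. The main obstacle in this plan is the intermediate step showing that $\mbox{tw}_E$ respects parallel classes; while it is intuitively clear from the definition, a rigorous verification requires carefully unpacking how $\mbox{op}_E$ interacts with the affine structure of the two apartment halves and with the notion of direction of a conical cell, so that one can say unambiguously that sectors with a common direction in $E_+$ have twins with a common direction in $E_-$.
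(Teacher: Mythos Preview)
The paper does not supply its own proof of this corollary: both the preceding Proposition and this Corollary are stated as results taken from Ronan \cite{Ro03}, without argument. Your deduction of the Corollary from the Proposition is correct and is the natural route.

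Your flagged ``obstacle'' is lighter than you suggest. Rather than arguing abstractly that $\mbox{tw}_E$ preserves parallel classes, note that it \emph{reverses containment} of sectors: $\mbox{op}_E$ is a simplicial isomorphism $E_+\to E_-$ of Euclidean Coxeter complexes, hence an affine isometry, so it preserves containment and direction; and $\mbox{rev}_E$ reverses containment (if $x+\mathfrak{D}\subseteq y+\mathfrak{D}$ then $x-y\in\overline{\mathfrak{D}}$, whence $y-x\in-\overline{\mathfrak{D}}$ and $y+(-\mathfrak{D})\subseteq x+(-\mathfrak{D})$). Thus from $\mathfrak{C}_3\subseteq\mathfrak{C}_i$ in $E^{(i)}$ you get directly
\[
\A_i=\mbox{tw}_{E^{(i)}}\mathfrak{C}_i\subseteq\mbox{tw}_{E^{(i)}}\mathfrak{C}_3=\A_3^{(i)},
\]
and containment of sectors immediately gives parallelism. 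This is exactly the content of the remark the paper makes just before the Proposition (``if $\A$ is a sector twinned with $\A'$ and $\mathfrak{C}$ is any sector containing $\A$ then $\A'$ contains a sector $\mathfrak{C}'$ twinned with $\mathfrak{C}$''). The chain $\A_1\subseteq\A_3^{(1)}\parallel\A_3^{(2)}\supseteq\A_2$ then finishes the argument as you wrote.
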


Let $A$ and $A'$ be simplices of $I_+$ and $I_-$ respectively. Then $A$ and $A'$ correspond to classes of parallel interior conical cells $[\mathfrak{A}]$ and $[\mathfrak{A}']$ respectively. We say that $A$ and $A'$ are \emph{interior opposite} if and only if there exist conical cells $\mathfrak{U}\in [\mathfrak{A}]$ and $\mathfrak{U}'\in [\mathfrak{A}']$ such that $\mathfrak{U}$ and $\mathfrak{U}'$ are opposite conical cells in a twin apartment. This is equivalent to saying that $A$ and $A'$ are opposite in an interior apartment. It is important to note that interior opposition is a stronger condition than opposition in the spherical building $I_\pm$ as can be seen in the case of a twin tree.

\section{Complete Reducibility}

Let $X$ be the geometric realization of a spherical building. In \cite{Se04}, Serre defines the notion of complete reducibility for a convex subcomplex of $X$ and gives equivalent criteria to determine if a convex subcomplex is completely reducible.

\begin{defs} A convex subcomplex $Y$ of $X$ is said to be \emph{completely reducible} if for every point $y\in Y$, there exists a point $y'\in Y$ such that $y$ is opposite $y'$, or equivalently for every simplex $s$ of $Y$ there exists an opposite simplex $s'$ of $Y$.\end{defs}

\begin{thm}\label{Serthm}[Theorem 2.1, \cite{Se04}] Let $Y$ be a convex subcomplex of $X$. Then the following are equivalent:
\begin{enumerate}
\item[(a)] $Y$ is completely reducible in $X$.
\item[(b)] $Y$ contains a pair of opposite simplices which have the same dimension as $Y$.
\item[(c)] $Y$ contains a Levi sphere of the same dimension as $Y$.
\item[(d)] $Y$ is not contractible.
\item[(e)] For every vertex of $Y$, $Y$ contains an opposite vertex.
\end{enumerate}
\end{thm}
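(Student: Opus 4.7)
The plan is to establish the equivalences through the cycle $(a) \Rightarrow (b) \Rightarrow (c) \Rightarrow (d) \Rightarrow (a)$, together with the trivial $(a) \Rightarrow (e)$ and a separate implication $(e) \Rightarrow (b)$ which closes the loop back to $(a)$. The trivial implications come first: $(a) \Rightarrow (e)$ is immediate because vertices are simplices, and $(a) \Rightarrow (b)$ follows by choosing a $Y$-chamber $A$ (a simplex of $Y$ of maximal dimension), using $(a)$ to produce an opposite simplex $A' \in Y$, and observing that the opposition involution on an apartment containing $A$ and $A'$ preserves dimension, so $A'$ is itself a $Y$-chamber.

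For $(b) \Rightarrow (c)$, given opposite simplices $A, A'$ of dimension $\dim Y$ in $Y$, their convex hull in $X$ lies inside $Y$ by convexity. Structurally this convex hull is the join of $A$ with the unique apartment of the link $X_A$ determined by $A'$, which is topologically a sphere of dimension $\dim Y$; this is the Levi sphere. Then $(c) \Rightarrow (d)$ is pure topology: a sphere of dimension equal to $\dim Y$ has nonzero top-dimensional homology, so the containing complex $Y$ of the same dimension cannot be contractible.

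The main obstacle is $(d) \Rightarrow (a)$, which I would argue by contrapositive: assuming some simplex $s$ of $Y$ has no opposite in $Y$, produce an explicit deformation retraction of $Y$ onto a proper subcomplex and conclude that $Y$ is contractible. Fix an interior point $x$ of $s$, and for each $y \in Y$ flow along the spherical geodesic from $y$ toward $x$ inside $Y$; the only obstruction to this flow reaching $x$ would be the presence of a simplex opposite $s$ somewhere in $Y$, which by hypothesis does not occur. The delicate part is purely combinatorial: verifying that the retraction stays in $Y$ and is continuous across panel boundaries, which uses the sign-sequence characterization of projection from Section 3 and the description of $Y$ as an intersection of roots. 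This step is where the substance of the theorem sits, and one expects to invoke Serre's original argument on ``non-opposite'' simplices yielding a contracting vector field.

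Finally, for $(e) \Rightarrow (b)$, I would induct on $\dim Y$. Pick a $Y$-chamber $A$ and write $A = v * A_0$ with $v$ a vertex and $A_0$ a codimension-one face. Apply $(e)$ to $v$ to obtain an opposite vertex $v'$ in $Y$. The link $Y_v$ is a convex subcomplex of the spherical building $X_v$ of strictly smaller dimension, and inherits a version of $(e)$: any vertex $w$ of $Y_v$ corresponds to an edge $vw$ in $Y$, and $(e)$ applied to $w$ followed by projection across $v$ yields a vertex of $Y_v$ opposite to $w$. The inductive hypothesis then furnishes a $Y$-chamber of $Y_v$ opposite to the image of $A_0$, which combined with $v'$ (after replacing $v'$ by $\mathrm{proj}$ into the appropriate apartment, if needed) produces the required opposite $Y$-chamber of $A$. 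The subtle point here is ensuring that the vertex $v'$ provided by $(e)$ can be aligned with the inductively constructed opposite of $A_0$, which requires a projection-and-convexity argument using the intersection-of-roots description of $Y$.
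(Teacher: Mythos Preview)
Your outline essentially reproduces Serre's argument, and the paper does not itself prove this theorem: it is quoted from \cite{Se04}, and the paper explicitly records that Serre's route runs $(c)\Rightarrow(d)\Rightarrow(a)$ via contractibility, exactly as you sketch.

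The one place where the paper contributes something new is a direct combinatorial proof of $(c)\Rightarrow(a)$ (Proposition~\ref{Levisph}), inserted precisely because the topological detour through $(d)$ does not generalize to twin buildings. Given a full-dimensional Levi sphere $S\subset Y$ and a $Y$-chamber $A$ adjacent to $S$ along a panel $x$, the paper takes the opposite panel $y$ in $S$, chooses a $Y$-chamber $B$ in $S$ containing $y$, and works in an apartment $E\supset A\cup B$: convexity forces $\mbox{Con}(A,B)$ into a full-dimensional Levi sphere of $E$, and since $A\neq\mbox{proj}_xB$ one gets $A$ opposite $B$. So your geodesic-contraction argument for $(d)\Rightarrow(a)$ is correct in the spherical case but is exactly the step the paper replaces; the elementary gallery argument is what survives in the twin setting and is reused later. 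Your link-induction for $(e)$ is also in the right spirit; the paper's twin analogue (Proposition~\ref{oppver}) runs the same induction but aims at $(a)$ directly via Proposition~\ref{resbuild}, which sidesteps the alignment issue you flag at the end of your sketch.
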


A Levi sphere $S$ of $X$ is a subcomplex of an apartment $E$ of $X$ which is the convex hull of a pair of opposite simplices, $(s,s')$. Note that $S$ is the support of $s$, which is the intersection of walls containing $s$. If $E\cong S^2$, then the Levi spheres are $E$ itself, any subcomplex which is a great circle, and any pair of opposite vertices. 

In Serre's proof of this theorem, he shows (c) implies (d) implies (a). Since this argument does not generalize to twin buildings we give a direct proof of (c) implies (a).

\begin{prop}\label{Levisph}  Let $Y$ be a convex subcomplex of a spherical building $X$. If $Y$ contains a Levi sphere, $S$, with $\dim(S)=\dim(Y)$ then $Y$ is completely reducible.\end{prop}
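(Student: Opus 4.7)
I would prove this directly, without invoking the topological argument through (d) used in Serre's original proof, by constructing an opposite in $Y$ for each simplex $A \in Y$. The Levi sphere is by definition $S = \mbox{conv}(s, s')$ for a pair of opposite simplices $s, s' \in E_0$; since $\dim S = \dim s = \dim Y$, the hypothesis provides opposite simplices $s, s' \in Y$ of dimension $\dim Y$.

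The first step is a rigidity observation: for every simplex $A \in Y$, $\mbox{proj}_s A = s$ and $\mbox{proj}_{s'} A = s'$. Indeed, $\mbox{proj}_s A$ lies in $Y$ by convexity, contains $s$ as a face (so has dimension $\geq \dim s = \dim Y$), and lies in $Y$ (so has dimension $\leq \dim Y$); these inequalities force $\mbox{proj}_s A = s$. By the sign-sequence formula for projection, this means that in any apartment $E$ containing $A$ and $s$ one has $\sigma_H(A) = 0$ whenever $\sigma_H(s) = 0$; equivalently, $A$ lies in the support of $s$ within $E$.

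The second step is to exhibit an apartment $E$ containing all three of $A$, $s$, and $s'$. In such an $E$, $\mbox{op}_E s = s'$, so the support of $s$ in $E$ coincides with $S = \mbox{conv}(s, s')$, and Step 1 places $A$ in $S$. Since $\mbox{op}_E$ preserves the support of $s$, the simplex $B := \mbox{op}_E A$ also lies in $S \subseteq Y$ and is opposite to $A$, which completes the proof.

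The main obstacle is producing this common apartment, since three simplices need not lie in a common apartment even when they do pairwise. I would address this by leveraging the rigidity of Step 1: pick a chamber $D$ of $X$ containing $A$, and attempt to show via the sign-sequence description of projections and the gate property that the chamber projections of $D$ onto the residues of $s$ and $s'$ are opposite chambers of $X$; the unique apartment through this opposite pair then contains $A$, $s$, and $s'$. Verifying this oppositeness claim is where the technical content of the argument concentrates.
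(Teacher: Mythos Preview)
Your Step 1 is correct: $\mbox{proj}_s A = s$ does force $A$ into the support of $s$ in any apartment containing both. But Step 2 cannot succeed, because the common apartment you seek does not exist when $A \notin S$. Indeed, if an apartment $E$ contained $A$, $s$, and $s'$, then $s' = \mbox{op}_E s$, so Step 1 would give $A \in \mbox{supp}_E(s) = \mbox{conv}(s,s') = S$; your argument would therefore prove $Y = S$, which is false in general. Already for a thick building $X$ of type $A_1$ one may take $Y = X$, let $s, s'$ be any two chambers (so $S=\{s,s'\}$ is a Levi sphere with $\dim S = \dim Y = 0$), and let $A$ be a third chamber in $Y \setminus S$. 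Your proposed remedy---project a chamber $D \geq A$ to the residues of $s$ and $s'$ and use the apartment through the resulting opposite pair---does not help: in this example the residues are $\{s\}$ and $\{s'\}$, the projections are $s$ and $s'$ themselves (opposite, exactly as you hoped), yet the apartment $\{s,s'\}$ still omits $A$. Nothing in the gate property forces that apartment to contain $D$.

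The paper's proof avoids this trap by never trying to place $A$ in an apartment with both poles $s$ and $s'$. It reduces, by induction on the gallery distance from $A$ to $S$, to the case where $A$ shares a codimension-one face $x$ with $S$; it then chooses a single top-dimensional simplex $B \in S$ containing $\mbox{op}_S x$, passes to an apartment $E$ through $A$ and $B$ only, and argues inside $E$ (where $\mbox{Con}(A,B)$ lies in a Levi sphere of the correct dimension and $A \neq \mbox{proj}_x B$) that $A = \mbox{op}_E B$. Thus the opposite of $A$ is located inside $S$ itself, using an apartment through $A$ and one well-chosen simplex of $S$ rather than through $A$ and both of $s,s'$.
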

  \begin{proof} Let $A$ be any simplex of $Y$ not in $S$. We must show that $A$ has an opposite in $Y$. Note that we only need to consider simplices $A$ with $\dim(A)=\dim(Y)$. We can induct on the distance from $S$ to $A$ using the fact that $Y$ is convex, to reduce to the case when $A$ is adjacent to $S$ (i.e. $A$ has as a face a simplex $x\in S$ with $\dim(x)=\dim(S)-1$). Let $x$ be such a simplex and let $y$ be the simplex in $S$ which is opposite $x$ and let $B$ be one of the two simplices of dimension $\dim(S)$ in $S$ containing $y$. Consider the convex hull of $x$ and $B$, $\mbox{Con}(x,B)\subset S$. Let $E$ be any apartment containing $A$ and $B$. Then $\mbox{Con}(x,B)\subset \mbox{Con}(A,B)\subset E$. Since $\mbox{Con}(A,B)\subset Y$ and $\dim(A)=\dim(Y)$ we have that $\mbox{Con}(A,B)$ is contained in a Levi sphere $S'$ of $E$ with $\dim(S')=\dim(Y)$. Since $x=$op$_E y$ and $A\neq \mbox{proj}_xB$ ($A\not\in\mbox{Con}(x,B)$) we must have that $A$ is opposite $B$.
  \end{proof}
  
\subsection{Complete Reducibility in a twin building}

Let $X=(X_+,X_-)$ be the geometric realization of a twin building. We can give a definition of a completely reducible subcomplex which is analogous to the spherical case.

\begin{defs} A convex subcomplex $Y=(Y_+,Y_-)$ of a twin building $X=(X_+,X_-)$ is \emph{completely reducible} (or $Y$ is $X$-cr) if for every simplex  $y\in Y_\epsilon$ there is a simplex $y'\in Y_{-\epsilon}$ which is opposite $y$ for $\epsilon=+$ or $-$.\end{defs}

When $X$ is a twin building associated to a group $G$ with a twin $BN$-pair and $Y$ is the subcomplex stabilized by a subgroup $H$ of $G$, this definition of complete reducibility is equivalent to the one given by P.E. Caprace in \cite{Ca09} mentioned in the introduction.

We now list several propositions which Serre proves in \cite{Se04} for spherical buildings and whose proofs easily extend to the case of twin buildings where $Y$ is a convex subcomplex containing at least one spherical simplex in each component.  Note that by Lemma \ref{sphcha} this implies that every simplex of maximal dimension in $Y$ is spherical. We assume this is the case in what follows.  In the twin case, by a \emph{Levi sphere} $S$ we mean the convex hull of a pair of opposite spherical simplices $(s,s')$. Thus, $S=(S_+,S_-)$ is the support of $s$ in an apartment containing $s$ and $s'$. We continue to use the term ``sphere'' in order to be consistent with the spherical case and in the case of a Euclidean twin building, if we identify the twinned points at infinity in the two components the resulting space is homeomorphic to a sphere.

 This first proposition generalizes the equivalence of (a), (b) and (c) of Theorem \ref{Serthm}.

\begin{prop}Let $Y$ be a convex subcomplex of $X$. Then $Y$ is $X$-cr if and only if $Y$ contains a Levi sphere $S$ with $\dim(S)=\dim(Y)$.\end{prop}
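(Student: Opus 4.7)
\emph{Necessity.} Pick a maximal-dimensional simplex $A\in Y_+$. By Lemma~\ref{sphcha} it is spherical, so $X$-complete reducibility yields an opposite $A'\in Y_-$ of the same dimension. Two opposite spherical simplices lie in a common twin apartment $E$, so $S:=\text{Con}(A,A')\subseteq E$ is a Levi sphere with $\dim S=\dim Y$; convexity of $Y$ forces $S\subseteq Y$.

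\emph{Sufficiency.} Fix a Levi sphere $S=\text{Con}(s,s')\subseteq Y$ of dimension $\dim Y$, with $s\in X_+$, $s'\in X_-$ opposite spherical, lying in a twin apartment $E_0$. The goal is to show that every simplex of $Y$ has an opposite in $Y$, paralleling Serre's spherical argument from Proposition~\ref{Levisph}. First I reduce to the case $\dim A=\dim Y$: since $Y_\epsilon$ is a chamber subcomplex by Proposition~3.136 of [AB08], any $A\in Y_\epsilon$ is a face of some max-dim $\tilde A\in Y_\epsilon$; once $\tilde A$ has an opposite $\tilde A'\in Y_{-\epsilon}$, in any twin apartment $E\supseteq\{\tilde A,\tilde A'\}$ the simplex $\text{op}_E A$ is a face of $\tilde A'$ and lies in $\text{Con}(\tilde A,\tilde A')\subseteq Y$. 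For max-dim $A\in Y_\epsilon$, I induct on gallery distance from $A$ to $S_\epsilon$: the base case $A\in S_\epsilon$ is handled by $\text{op}_{E_0}A\in S_{-\epsilon}$, and in the inductive step convexity supplies an adjacent max-dim $A_1\in Y_\epsilon$ strictly closer to $S_\epsilon$; by induction $A_1$ has an opposite $A_1'$, and $\text{Con}(A_1,A_1')\subseteq Y$ is itself a Levi sphere of dimension $\dim Y$ to which $A$ is adjacent. Everything thus reduces to the following adjacency statement.

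\emph{Adjacent case.} Suppose $A\in Y_\epsilon$ has maximal dimension and shares a codim-$1$ face $x$ with a Levi sphere $T\subseteq Y$ of dimension $\dim Y$, whose defining opposite pair lies in a twin apartment $E_T$. Set $y:=\text{op}_{E_T}x\in T_{-\epsilon}$ and pick a max-dim simplex $B\in T_{-\epsilon}$ having $y$ as a face. Both $A$ and $B$ are spherical, so they lie in a common twin apartment $E'$, and since codistance~$1$ is intrinsic, $y=\text{op}_{E'}x$. Then $\text{Con}(A,B)\subseteq E'\cap Y$ has dimension $\dim Y$ and hence is contained in a Levi sphere $S'\subseteq E'$ of that dimension. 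In $S'$ exactly two max-dim simplices have $x$ as a face: one is $\text{proj}_xB\in\text{Con}(x,B)\subseteq T$, and since $A\notin T$ the other must be $A$; on the other hand, this second max-dim $S'$-simplex across $x$ is precisely $\text{op}_{E'}B$ by intrinsic opposition inside the Levi sphere. Hence $A=\text{op}_{E'}B$, so $A$ is opposite $B$.

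\emph{Main obstacle.} The nonformal step is the adjacent case, which demands (i) the existence of a twin apartment $E'$ containing an arbitrary pair of spherical simplices of opposite signs---supplied by the twin-apartment existence machinery of Section~5 of [AB08]---and (ii) the identification, inside $E'$, of the Levi sphere $S'$ together with the verification that its two max-dim simplices across $x$ correspond, under $E'$-opposition, to the two max-dim simplices across $y$. The latter rests on the sign-sequence description of projection (Proposition~\ref{sign}) and on the independence of opposition from the choice of twin apartment; the reductions in the preceding paragraph are routine consequences of convexity of $Y$ and of $Y_\pm$ being chamber subcomplexes.
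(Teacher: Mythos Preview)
Your proof is correct and follows essentially the same approach as the paper's. The paper's proof is extremely terse---it simply says the converse ``is the same as that in Proposition~\ref{Levisph}''---and your argument is precisely the twin-building adaptation of that proposition: reduce to maximal-dimensional simplices, induct on gallery distance from the Levi sphere (replacing $S$ by $\text{Con}(A_1,A_1')$ at each step), and in the adjacent case pass to a common twin apartment $E'$ containing $A$ and $B$ to conclude $A=\text{op}_{E'}B$. Your write-up is in fact more explicit than the paper's about why the induction works and about the identification of the two maximal simplices across $x$ in $S'$; the one minor slip is the phrase ``codistance~$1$''---you mean $\delta^*=1$, i.e.\ numerical codistance~$0$, but the intended point (that opposition of simplices is independent of the ambient twin apartment) is correct.
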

  \begin{proof} If $Y$ is cr, $Y$ contains a pair of opposite simplices with the same dimension as $Y$. The convex closure of these two simplices is a Levi sphere. The proof of the converse is the same as that in Proposition \ref{Levisph}.\end{proof}
  
The next Lemma extends Lemma 2.6 in \cite{Se04} and follows from the gate property of twin buildings. Here $X_s$ is the link of $s$, and if $(s,s')$ is a pair of opposite simplices then the map proj$_{s'}$ from the set of simplices containing $s$ to the set of simplices containing $s'$ induces an isomorphism $X_s\rightarrow X_{s'}$ (\cite{Se04}). Let $S$ be the Levi sphere given by $s$ and $s'$. Then the \emph{building associated to $S$} is $X_s$ and will be written as $X_S$.

\begin{lem} Let $\{s,s'\}$ be a pair of opposite simplices, and let $t_1,t_2$ be two simplices of $X_s$. Let $t_1'$ be the simplex of $X_{s'}$ corresponding to proj$_{s'}t_1$. Then $t_1$ op $t_2$ in $X_s$ if and only if $t_1'$ op $t_2$ in $X$.\end{lem}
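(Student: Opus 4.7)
The plan is to reduce opposition, both in the spherical link $X_s$ and in the twin building $X$, to opposition inside a single twin apartment $E$ that contains $s,s'$ together with the simplices in question. The conceptual input is the formula $\mbox{proj}_{s'}\,t = \mbox{op}_E(\mbox{op}_{X_s\cap E}\,t)$ for $t \in X_s\cap E$, which follows by combining Lemma 5.173(2) with Lemma \ref{linkop} and the gate property; this re-expresses the link isomorphism $\mbox{proj}_{s'}$ as a composition of the two apartment-level opposition involutions, so opposition in the link corresponds cleanly to opposition across the twin.

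For the forward direction, I would pick chambers $C_1 \geq t_1$ and $C_2 \geq t_2$ opposite in the spherical link $X_s$, set $C_1' := \mbox{proj}_{s'}C_1$, and use the facts that $\mbox{proj}_{s'}$ preserves Weyl distance between chambers and that $d^*(\mbox{proj}_{s'}C_2, C_2) = \ell(w_J)$ to feed the numerical gate formula $d^*(C_1',C_2) = d^*(\mbox{proj}_{s'}C_2, C_2) - d_-(C_1', \mbox{proj}_{s'}C_2)$. The answer is $0$, so $C_1'$ and $C_2$ are opposite in $X$ and determine a unique twin apartment $E$. Inside $E$, both $t_1'$ and $\mbox{op}_E t_2$ are faces of $C_1' = \mbox{op}_E C_2$; tracking the action of the longest link element $w_J$ on types shows they share the same $S$-type, and since each chamber has a unique face of each type they must coincide, which is exactly $t_1'\mbox{ op }t_2$ in $X$.

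For the converse, I would take a twin apartment $E$ witnessing $t_1' = \mbox{op}_E t_2$, noting that $s,s' \in E$ with $s' = \mbox{op}_E s$. The delicate step is to place $t_1$ itself inside $E$: globally $\mbox{proj}_{s'}\colon X_s \to X_{s'}$ is a bijection, so $t_1$ is the unique preimage of $t_1'$; the restriction of $\mbox{proj}_{s'}$ to $X_s\cap E$ is also a bijection onto $X_{s'}\cap E$ and has its own unique preimage of $t_1'$, which by global uniqueness must agree with $t_1$, forcing $t_1 \in E$. The apartment formula then gives $t_1' = \mbox{op}_E(\mbox{op}_{X_s\cap E}\,t_1)$, and combining with $t_1' = \mbox{op}_E t_2$ yields $t_2 = \mbox{op}_{X_s\cap E}\,t_1$, i.e., $t_1\mbox{ op }t_2$ in the apartment $X_s \cap E$, hence in $X_s$.

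The step I expect to cause the most friction is the type-matching at the end of the forward direction together with the ``$t_1 \in E$'' observation in the converse. Both rely crucially on $\mbox{proj}_{s'}$ being a global isomorphism of buildings (not merely an apartment bijection) and on $\mbox{op}_E$ being type-preserving because $\delta^*(C,\mbox{op}_E C) = 1$ for all chambers $C$ of $E$; once these are in place, the rest is a formal manipulation of opposition involutions.
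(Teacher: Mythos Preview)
Your argument is correct and follows essentially the same route as the paper: both reduce to chambers and use the numerical gate identity, which in the paper's form reads $d^*(C_2,C_1') = d^*(C_1,C_1') - d(C_1,C_2) = \ell(w_J) - d(C_1,C_2)$ and yields the chamber-level equivalence in both directions at once. The paper leaves the passage from chambers back to general simplices implicit, whereas you carry it out explicitly via type-tracking and the apartment argument for $t_1\in E$; this extra care is sound but not strictly needed once the chamber statement is in hand.
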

  \begin{proof}Since $X_s$ and $X_{s'}$ correspond to opposite residues of $X$, it suffices to show that given chambers $C_1$ and $C_2$ in $X_s$ then $C_1$ and $C_2$ are opposite in $X_s$ if and only if $C_1'$ op $C_2$ in $X$ where $C_1'$ is proj$_{s'}C_1$. This follows from the fact that $d^*(C_2,C_1')=d(C_2,C_1)-d^*(C_1,C_1')$ (Lemma 5.149 of [AB08]) and $d^*(C_1,C_1')=m$ where $m$ is the diameter of $X_s$ and $X_{s'}$.\end{proof}
  
The next proposition generalizes Proposition 2.5 in [Se04].
  
\begin{prop}\label{resbuild} Let $Y$ be a convex subcomplex of $X$, and let $S$ be a Levi sphere contained in $Y$. Let $X_S$ be the building associated to $S$, and let $Y_S$ be the subcomplex of $X_S$ defined by $Y$. Then $Y$ is $X$-cr if and only if $Y_S$ is $X_S$-cr.\end{prop}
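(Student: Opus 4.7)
The main tool is the Levi-sphere characterization (the proposition stated just before the preceding Lemma), which asserts that a convex subcomplex is completely reducible exactly when it contains a pair of opposite simplices of its own maximal dimension. Together with the Lemma, which translates opposition in the spherical link $X_S$ into opposition in $X$ via the projection $\mathrm{proj}_{s'}$, the proof reduces to transporting maximal opposite pairs back and forth between $Y$ and $Y_S$.

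First, I would establish the dimensional identity $\dim(Y_S)=\dim(Y)-\dim(s)-1$. Given any maximal simplex $a$ of $Y$, convexity places $\mathrm{proj}_s a$ in $Y$; by Proposition \ref{sign} this projection has $s$ as a face and, because its zero-set is the intersection of the zero-sets of $s$ and $a$, has dimension at least $\dim(a)$. Hence $Y$ contains a maximal simplex $\hat t$ having $s$ as a face, and its image $t\in Y_S$ has the claimed maximal dimension in $X_S$.

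For the forward direction, assume $Y$ is $X$-cr and let $t$ be such a maximal element of $Y_S$. Complete reducibility gives $u\in Y$ opposite $\hat t$ in $X$; let $\Sigma$ be a twin apartment containing $\hat t$ and $u$. Since $s$ is a face of $\hat t$ it lies in $\Sigma$, and since $s$ is spherical the twin projection $\mathrm{proj}_s u$ is defined. Convexity places $\mathrm{proj}_s u$ in $Y$, and Proposition \ref{sign} shows it contains $s$. For every twin wall $H$ of $\Sigma$ passing through $s$,
\[\sigma_H(\mathrm{proj}_s u)=\sigma_H(u)=-\sigma_H(\hat t)=-\sigma_H(t),\]
so the simplex $r\in X_s$ corresponding to $\mathrm{proj}_s u$ is opposite $t$ in $X_s$. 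Thus $Y_S$ contains a maximal opposite pair $(t,r)$, and the Levi-sphere characterization applied in $X_S$ yields that $Y_S$ is $X_S$-cr.

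For the converse, suppose $Y_S$ is $X_S$-cr, and by the same characterization choose maximal $r,r^*\in Y_S$ opposite in $X_S$. The Lemma (with $t_1=r$, $t_2=r^*$) asserts that $\mathrm{proj}_{s'}(\hat r)$ is opposite $\widehat{r^*}$ in $X$; convexity, together with $\hat r,\widehat{r^*}, s'\in Y$, puts both of these simplices in $Y$, and the dimension identity gives each of them dimension $\dim(Y)$. Applying the Levi-sphere characterization in $X$ now gives that $Y$ is $X$-cr. The main obstacle is the sign-sequence step in the forward direction: one must carefully verify that $s$ sits in the twin apartment produced by the opposite pair $(\hat t, u)$ so that Proposition \ref{sign} is applicable and all three sign sequences live on a common set of twin walls.
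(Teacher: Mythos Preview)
Your proposal is correct and follows essentially the same route as the paper, which simply defers to Serre's spherical argument: both directions rest on the Levi-sphere characterization together with the preceding Lemma translating link-opposition into ambient opposition via $\mathrm{proj}_{s'}$. Your sign-sequence computation in the forward direction is a valid way to see that $\mathrm{proj}_s u$ is link-opposite to $t$; note that Lemma~\ref{linkop} gives this in one stroke, since $\mathrm{op}_\Sigma u=\hat t$ forces $\mathrm{proj}_s u=\mathrm{op}_{E_s}(\mathrm{proj}_s \hat t)=\mathrm{op}_{E_s}\hat t$.
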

  \begin{proof}This proof is the same as that for the spherical case.\end{proof}
  
The next proposition is Theorem 2.2 in [Se04]. The proof is similar to that in the spherical case.
  
\begin{prop}\label{oppver} $Y$ is $X$-cr if and only if for every spherical vertex $x$ of $Y$, there exists a vertex $x'$ of $Y$ with $x$ op $x'$.\end{prop}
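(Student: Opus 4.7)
The ``only if'' direction is immediate from the definition of $X$-cr: every spherical vertex of $Y$ is in particular a simplex of $Y$, and therefore has an opposite in $Y$.

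For the ``if'' direction, I plan to follow Serre's proof of Theorem 2.2 in \cite{Se04} and reduce to the spherical setting by way of Proposition \ref{resbuild}. Start by applying the hypothesis to a spherical vertex $v \in Y$ to obtain an opposite vertex $v' \in Y$; note that $v'$ is necessarily spherical as well. By convexity of $Y$, the $0$-dimensional Levi sphere $S := \mbox{Con}(v, v')$ is contained in $Y$. Since $v$ is spherical, the associated link $X_S = X_v$ is a (single) \emph{spherical} building, and the induced $Y_S \subseteq X_S$ is a convex subcomplex of strictly smaller dimension. By Proposition \ref{resbuild}, $Y$ is $X$-cr if and only if $Y_S$ is $X_S$-cr. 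Applying Theorem \ref{Serthm}(e) inside the spherical building $X_S$, it now suffices to show that every vertex of $Y_S$ has an opposite vertex in $Y_S$.

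The main task, and the point at which the twin building structure enters, is producing such opposites inside $Y_S$ from the vertex hypothesis on $Y$. Given a vertex $t \in Y_S$, it lifts to an edge $T = \{v, w\} \in Y$, and $T$ is spherical because its residue is contained in the finite residue of the spherical vertex $v$. My proposed candidate for $\bar t$ is obtained by projecting across the Levi sphere: first set $T^{*} := \mbox{proj}_{v'} T$, a simplex of $Y$ containing $v'$ (which lies in $Y$ since $Y$ is closed under projections and $v'$ is spherical), and then define $\bar T := \mbox{proj}_v T^{*}$, whose image in $X_S$ is the desired vertex $\bar t$. Using Lemma \ref{linkop}, one checks that $\bar t$ is indeed the opposite of $t$ in the spherical link $X_v$, and the sign-sequence description in Proposition \ref{sign} together with convexity of $Y$ gives $\bar T \in Y$, so $\bar t \in Y_S$.

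The main obstacle will be the technical verification of these opposition and containment claims, which I expect to reduce to a careful sign-sequence comparison inside a twin apartment containing $v$, $v'$, and $T$; the existence of such an apartment and the needed projection identities are provided by the twin apartment lemma in Section 2 together with the convexity machinery of Section 3. A secondary point to handle at the outset is that $Y$ actually contains a spherical vertex; this follows from the standing assumption that every simplex of maximal dimension in $Y$ is spherical, since a vertex of such a maximal-dimensional simplex can be chosen spherical (by a Lemma \ref{sphcha}-type argument), giving the initial pair $(v, v')$ needed to start the reduction.
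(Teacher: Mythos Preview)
Your reduction via Proposition~\ref{resbuild} to the spherical link $X_v$ is the right strategy and matches the paper. The gap is in your candidate for the link-opposite: the simplex $\bar T=\mbox{proj}_v(\mbox{proj}_{v'}T)$ is not opposite $T$ in $X_v$ --- it equals $T$. To see this, compute sign sequences in a twin apartment $E$ containing $v$, $v'=\mbox{op}_E v$, and $T$, using Proposition~\ref{sign} for both cross-projections. If $\sigma_H(v)\neq 0$ then $\sigma_H(\bar T)=\sigma_H(v)=\sigma_H(T)$ since $v\leq T$; if $\sigma_H(v)=0$ then also $\sigma_H(v')=-\sigma_H(v)=0$, whence $\sigma_H(T^*)=\sigma_H(T)$ and $\sigma_H(\bar T)=\sigma_H(T^*)=\sigma_H(T)$. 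Thus $\bar T=T$. Conceptually, $\mbox{proj}_{v'}$ and $\mbox{proj}_v$ are inverse isomorphisms between the links of the opposite vertices $v$ and $v'$, so their composite is the identity on $X_v$. A sanity check confirms something is wrong: your construction uses the vertex hypothesis only once (to produce $v'$), so if it worked it would prove that $Y$ is $X$-cr whenever $Y$ contains a single pair of opposite spherical vertices --- which is false.

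The missing idea is that the hypothesis must be invoked again, on the \emph{other} endpoint $w$ of the edge $T=\{v,w\}$. The paper chooses a vertex $w'\in Y$ opposite $w$ and sets $\bar T:=\mbox{proj}_v w'$, which lies in $Y$ by convexity. In a twin apartment $E$ containing $T$ and $w'$ one has $\mbox{op}_E w'=w$ and $\mbox{proj}_v w=T$ (the edge $vw$), so Lemma~\ref{linkop} gives that $\bar T=\mbox{proj}_v w'$ is $\mbox{op}_{E_v}$ to $\mbox{proj}_v(\mbox{op}_E w')=T$, i.e.\ $\bar T$ is opposite $T$ in $X_v$, as required.
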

  \begin{proof} Note that by the definition, if $Y$ is $X$-cr then every simplex has an opposite so in particular, every vertex has an opposite in $Y$. For the converse, let $y$ be spherical vertex of $Y$. Then by assumption there exists $y'\in Y$ which is opposite $y$. By Propositon \ref{resbuild}, it suffices to show that $Y_y$ is $X_y$-cr. We proceed by induction. Since dim$(X_y)=$dim$(X)-1$, it suffices to show that every vertex of $Y_y$ has an opposite in $Y_y$. Let $z$ be any vertex of $Y_y$. Then $z$ corresponds to a segment $yz$ with endpoints vertices $y$ and $z$ in $Y$. Let $z'$ be any vertex in $Y$ which is opposite $z$. Since $z$ op $z'$ we know that the convex hull of $yz$ and $z'$ has dimension one and is contained in $Y$. Consider proj$_yz'$. This is a one-dimensional simplex of $Y$ with one vertex being $y$. Let $z_1$ be the other vertex. Then by Lemma \ref{linkop} we have that $yz$ is opposite $yz'$ in $Y_y$. Hence $Y_y$ has the desired property and $Y$ is $X$-cr.  \end{proof}
  
\subsection{Complete Reducibility and the Building at Infinity}\label{crinfty}

In this section we assume that $X=(X_+,X_-)$ is a Euclidean twin building. Before proving the main result we need to state a couple of lemmas.
 
\begin{lem}\label{consecface} Let $x$ be a spherical vertex in $X_{\epsilon}$ and $A$ a simplex in $X_{-\epsilon}$ for $\epsilon=+$ or $-$. Then the convex hull of $x$ and $A$ in $X_{\epsilon}$ contains a conical cell.\end{lem}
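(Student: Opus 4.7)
The plan is to work inside a single twin apartment $E = (E_+, E_-)$ containing both $x$ and $A$ (guaranteed by standard twin-building results, since $x$ is spherical) and to exhibit an explicit conical cell based at $x$ that lies in $\mbox{Con}(x,A) \cap E_+$. By Theorem \ref{convex}, the convex hull is the intersection of twin roots of $E$ containing both $x$ and $A$; at each twin wall $H$ this imposes sign-sequence constraints on $\mbox{Con}(x,A)$, except at ``conflict'' walls, where $\sigma_H(x) = -\sigma_H(A) \neq 0$, which impose no constraint at all.

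I define $\mathfrak{D}$ to be the cell of the $\overline{W}$-arrangement at $x$ whose sign on every hyperplane $H_x \in \overline{\mathcal{H}}$ equals $-\sigma_{H_x}(\mbox{op}_E A)$, i.e. the conical-cell direction from $x$ ``opposite to $\mbox{op}_E A$''. For actual walls $H_x \in \mathcal{H}_x$ through $x$, Proposition \ref{sign} applied to $\mbox{proj}_x A$ together with the opposition identity $\sigma_{H_x}(\mbox{op}_E A) = -\sigma_{H_x}(A)$ shows that $\mathfrak{D}$ lies in the direction of $\mbox{proj}_x A$ from $x$ (and by Lemma \ref{linkop} this direction is opposite to that of $\mbox{proj}_x(\mbox{op}_E A)$ in the link of $x$); for non-special $x$, $\mathfrak{D}$ is further pinned down by the sign conditions on the ``fake'' hyperplanes in $\overline{\mathcal{H}} \setminus \mathcal{H}_x$. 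Set $\mathfrak{A} = x + \mathfrak{D}$, a conical cell based at $x$.

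I verify $\mathfrak{A} \subseteq \mbox{Con}(x,A) \cap E_+$ wall-by-wall. For a twin wall $H$ through $x$, $\sigma_H(\mathfrak{A}) = \sigma_H(\mathfrak{D}) = -\sigma_H(\mbox{op}_E A) = \sigma_H(A)$, matching the constraint. For a non-conflict twin wall $H$ with $\sigma_H(x) \neq 0$, the condition $\sigma_H(A) \in \{0, \sigma_H(x)\}$ combined with the opposition identity forces $\mbox{op}_E A$ onto the opposite side of $H$ from $x$ (or onto $H$), so by construction every $v \in \mathfrak{D}$ satisfies $\overline{f}_H(v) \cdot (f_H(x) - c) \geq 0$ (with $H = \{f_H = c\}$), whence each ray $x + tv$ for $t \geq 0$ stays on $x$'s side of $H$ and $\mathfrak{A}$ never crosses $H$. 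Conflict walls impose no constraint, so crossings there are permitted, and together these show $\mathfrak{A} \subseteq \mbox{Con}(x, A) \cap E_+$.

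The main obstacle is the case of a non-special vertex $x$: the simplicial direction of $\mbox{proj}_x A$ is only a union of several $\overline{W}$-cells, and $\mathfrak{D}$ must be carefully selected to respect the extra hyperplanes in $\overline{\mathcal{H}} \setminus \mathcal{H}_x$ coming from fake walls through $x$. Throughout the argument, the sign conventions for twin walls (via the opposition involution $\mbox{op}_E$) must be tracked consistently.
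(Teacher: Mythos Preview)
Your approach is essentially the same as the paper's: both work in a twin apartment $E$ containing $x$ and $A$, invoke Theorem~\ref{convex} to view $\mbox{Con}(x,A)$ as an intersection of twin roots, and identify the relevant direction via $B=\mbox{proj}_xA$ (equivalently, opposite to $\mbox{proj}_x(\mbox{op}_EA)$ in the link, by Lemma~\ref{linkop}). The paper's execution is considerably shorter: rather than building a specific $\mathfrak{D}$ and verifying $x+\mathfrak{D}\subseteq\mbox{Con}(x,A)_\epsilon$ wall-by-wall, it observes directly (from the proof of Theorem~\ref{convex}) that $\mbox{Con}(x,A)_\epsilon$ is itself the intersection of the roots $\alpha$ with $x\in\partial\alpha$ and $B\in\alpha$; since $x$ is spherical there are only finitely many such walls, so this intersection is already a polyhedral cone based at $x$, hence equal to a single conical cell when $x$ is special and to a finite union of conical cells otherwise. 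Your explicit choice of $\mathfrak{D}$ via the signs $-\sigma_{H_x}(\mbox{op}_EA)$ singles out one of those cells, and your wall-by-wall verification is correct (your analysis of the non-conflict walls in fact shows why the extra constraints from walls not through $x$ are redundant on the $\epsilon$ side), but the paper avoids this work entirely.
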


\begin{proof} Let $E$ be a twin apartment containing the convex hull $\mbox{Con}(x,A)$ of $x$ and $A$. Let $B=$proj$_xA$. We know from the proof of Theorem \ref{convex} that $\mbox{Con}(x,A)_\epsilon$ is the intersection of roots $\alpha$ such that $A\not\in \alpha$ and $x\in \partial\alpha$, which are the roots $\alpha$ such that $B\in \alpha$ and $x\in \partial\alpha$. Hence if $x$ is a special vertex then $\mbox{Con}(x,A)_\epsilon$ is a conical cell. If $x$ is not a special vertex, then $\mbox{Con}(x,A)_\epsilon$ is  the union of a finite number of conical cells since $B$ is defined by a finite number of walls containing $x$. In either case, $\mbox{Con}(x,A)_\epsilon$ contains a sector-face.\end{proof}

\begin{lem}\label{linkapart}Let $E_0$ be a twin apartment of a twin building $X$. Let $M$ be a convex subcomplex of $E_0$ of dimension $m$. Let $y$ be a boundary simplex of $M$ and let $d\in M$ be the unique $m$-simplex having $y$ as a face. If $d'$ is any other $m$-simplex of $X$ having $y$ as a face, then there is a twin apartment containing $M$ and $d'$.\end{lem}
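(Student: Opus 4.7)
The plan is to proceed by induction along a gallery connecting a chamber of $X$ containing $d'$ to a chamber of $E_0$ containing $d$, applying Lemma \ref{twnewapart} at each step to switch twin apartments across a panel while preserving a twin root that contains $M$. Since in the setting of this paper $M$ will contain a spherical simplex, the $M$-chamber $d$ is itself spherical by Lemma \ref{sphcha}, and hence so is its face $y$; consequently the set $R_y$ of chambers of $X$ having $y$ as a face is a spherical sub-building of $X$.

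For the construction, pick chambers $D\supseteq d$ in $E_0$ and $D'\supseteq d'$ in $X$. Both lie in $R_y$, so they can be joined by a minimal gallery $\Gamma : D = C_0, C_1, \ldots, C_k = D'$ inside $R_y$, every chamber of which contains $y$ as a face. I would then construct by induction on $i$ a sequence of twin apartments $\mathcal{E}_0 = E_0, \mathcal{E}_1, \ldots, \mathcal{E}_k$ with $M \subseteq \mathcal{E}_i$ and $C_i \in \mathcal{E}_i$. For the inductive step, if $C_i$ already lies in $\mathcal{E}_{i-1}$, take $\mathcal{E}_i := \mathcal{E}_{i-1}$; otherwise the common panel $P_i = C_{i-1}\cap C_i$ lies in $\mathcal{E}_{i-1}$ and sits in a unique twin wall $H$ of $\mathcal{E}_{i-1}$ containing $y$. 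One then selects the twin root $\alpha\subseteq\mathcal{E}_{i-1}$ bounded by $H$ with $C_{i-1}\in\alpha$ and $M\subseteq\alpha$, and applies Lemma \ref{twnewapart} to $\alpha$ and the panel $P_i$ to produce $\mathcal{E}_i$ as the twin apartment $\mathrm{Conv}(\alpha, C_i)$. Iterating gives $\mathcal{E}_k$, a twin apartment containing $M$ and $D'\supseteq d'$, proving the lemma.

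The main obstacle is guaranteeing the existence of the twin root $\alpha$ at every inductive step; equivalently, one must show that $M$ lies entirely on the $C_{i-1}$-side of each wall crossed by $\Gamma$. The hypothesis that $y$ is a boundary simplex of $M$ with $d$ its only extending $m$-simplex in $M$ forces $M\cap R_y$ to consist solely of the faces of $d$. Combining this with Theorem \ref{convex}, which presents $M$ as an intersection of twin roots, the defining walls of $M$ passing through $y$ carve out a convex subcomplex of the apartment $R_y \cap \mathcal{E}_{i-1}$ that contains the vertex corresponding to $d$. The real work therefore lies in choosing the gallery $\Gamma$ to cross only such defining walls; I would do this by projecting $D'$ onto the intersection of these walls using the gate property inside the spherical residue $R_y$, so that at every step the required twin root $\alpha$ is available and the induction closes.
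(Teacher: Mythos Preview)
Your overall strategy matches the paper's: take a gallery in the residue of $y$ from a chamber over $d$ to a chamber over $d'$, and at each panel swap twin apartments using Lemma~\ref{twnewapart} while carrying $M$ along inside a twin root. The divergence is in your final paragraph, where you believe a careful choice of gallery is required and phrase the obstacle as needing $M$ on the $C_{i-1}$-side of each crossed wall.

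Neither refinement is needed, and this is where the paper's argument is simpler. The hypothesis that $d$ is the \emph{unique} $m$-simplex of $M$ with face $y$ already forces $M$ to lie in one twin root bounded by \emph{every} wall $H$ through $y$ (in any twin apartment containing $M$). Indeed, if $M$ had simplices $a,b$ with $\sigma_H(a)=+$ and $\sigma_H(b)=-$, then $\operatorname{proj}_y a$ and $\operatorname{proj}_y b$ would lie in $M$ by convexity, would each strictly contain $y$ (their $H$-sign is nonzero by the sign-sequence formula for projections), and hence would each be $m$-simplices of $M$ containing $y$; but they lie on opposite sides of $H$, contradicting the uniqueness of $d$. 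So at each inductive step one simply takes $\alpha_i$ to be whichever twin root bounded by $H_i$ contains $M$: if $C_i$ already lies in $\mathcal{E}_{i-1}$ set $\mathcal{E}_i=\mathcal{E}_{i-1}$, and otherwise $C_i$ differs from the unique chamber of $\alpha_i$ on $P_i$ (that chamber lies in $\mathcal{E}_{i-1}$), so Lemma~\ref{twnewapart} applies. You do not need $C_{i-1}\in\alpha_i$, and the gate-property projection you sketch is both unnecessary and not obviously well-defined, since an intersection of walls in the spherical link of $y$ need not be a residue.
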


\begin{proof} Since $y$ is a boundary simplex of $M$ we can find a gallery $\Gamma:D_0,\ldots,D_n$ such that $d'$ is a face of $D_n$,  $D_i\cap M=y$ for $1\leq i\leq n$ and $D_0\cap M=d$. For each $1\leq i\leq n$ let $P_i=D_{i-1}\cap D_i$ and let $H_i$ be the wall in $E_{i-1}$ defined by $P_i$. Since $y\in H_i$ and $y$ is a boundary $(m-1)$-simplex of $M$ we have that $M$ is contained in a root, $\alpha_i$, of $E_{i-1}$ defined by $H_i$. By Lemma \ref{twnewapart}, there is a twin apartment $E_i$ containing $\alpha_i$ and $D_i$. Since $M\subset \alpha_i$ we have that $M\subset E_i$ for all $0\leq i\leq n$. Therefore, $E_n$ contains $d'$ and $M$.\end{proof}

\begin{thm}\label{infinitycr} Let $X=(X_+,X_-)$ be a Euclidean twin building and $Y=(Y_+,Y_-)$ a convex subcomplex of $X=(X_+,X_-)$. Let $I=(I_+,I_-)$ be the set of interior points in the buildings at infinity as in Section 4.2 and $Y^\infty=(Y^\infty_+,Y^\infty_-)$ the subcomplex of $I$ corresponding to $Y$. Then $Y$ is a completely reducible subcomplex of $X$ if and only if every simplex of maximal dimension in $Y^\infty$ has an interior opposite in $Y^\infty$.\end{thm}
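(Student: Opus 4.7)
The plan is to prove both implications, using the preceding characterization of $X$-cr via existence of a Levi sphere of dimension $\dim Y$, together with Proposition \ref{comapart} in both directions.

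For the $\Leftarrow$ direction, assume every max-dim simplex of $Y^\infty$ has an interior opposite in $Y^\infty$; the goal is to exhibit a Levi sphere in $Y$ of dimension $\dim Y$. Pick a max-dim $\sigma \in Y^\infty_+$ with interior opposite $\sigma' \in Y^\infty_-$; by definition of interior opposite there is a twin apartment $F$ with $\mbox{op}_F \sigma = \sigma'$. The hypothesis gives representative conical cells $\mathfrak{A} \subset Y_+$ and $\mathfrak{A}' \subset Y_-$ of directions $\sigma, \sigma'$. Since parallel max-dim conical cells in a Euclidean building share common sub-conical-cells, obtain sub-conical-cells $\mathfrak{V} \subseteq \mathfrak{A}$ lying in $F_+$ and $\mathfrak{V}' \subseteq \mathfrak{A}'$ lying in $F_-$, so that $\mathfrak{V} \subset Y_+ \cap F_+$ and $\mathfrak{V}' \subset Y_- \cap F_-$. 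Then $\mbox{op}_F \mathfrak{V}$ and $\mathfrak{V}'$ are both sub-conical-cells of direction $\sigma'$ inside $F_-$, so they share a common sub-conical-cell $\mathfrak{W}' \subseteq \mbox{op}_F \mathfrak{V} \cap \mathfrak{V}'$; setting $\mathfrak{W} := \mbox{op}_F \mathfrak{W}' \subseteq \mathfrak{V}$ yields $\mathfrak{W} \subset Y_+$ and $\mathfrak{W}' \subset Y_-$ with $\mbox{op}_F \mathfrak{W} = \mathfrak{W}'$. Finally pick a chamber $C \subset \mathfrak{W}$ (automatically max-dim in $Y$); its $F$-opposite $\mbox{op}_F C \subset \mathfrak{W}' \subset Y_-$, and $\mbox{conv}(C, \mbox{op}_F C) \subset Y$ is the required Levi sphere of dimension $\dim Y$.

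For the $\Rightarrow$ direction, assume $Y$ is $X$-cr and take a max-dim $\sigma \in Y^\infty_+$ represented by a max-dim conical cell $\mathfrak{A} \subset Y_+$. Pick a $Y$-chamber $C \subset \mathfrak{A}$; complete reducibility yields an opposite $Y$-chamber $C' \in Y_-$. Apply Proposition \ref{comapart} to obtain a twin apartment $E^*$ containing $C'$ together with a sub-conical-cell $\mathfrak{A}_* \subseteq \mathfrak{A}$ of direction $\sigma$. Then $\sigma' := \mbox{op}_{E^*} \sigma$ is by construction an interior opposite of $\sigma$, realized inside $E^*$. The remaining task is to show $\sigma' \in Y^\infty_-$, i.e.\ to exhibit a conical cell in $Y_-$ of direction $\sigma'$. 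Since $\mbox{conv}(\mathfrak{A}_*, C') \subset Y$ by convexity, the key geometric claim is that the intersection of this convex hull with $E^*_-$ contains a conical cell of direction $\sigma'$ based at a vertex of $C'$: in the twin Tits cone of $E^*$, the line segments from deep points of $\mathfrak{A}_*$ towards $C'$ asymptotically sweep out rays in direction $\mbox{op}_{E^*} \sigma = \sigma'$ based near $C'$, filling out such a conical cell.

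The main obstacle is precisely this key geometric claim in the $\Rightarrow$ direction, which amounts to extending Lemma \ref{consecface} from a spherical vertex opposite a simplex to a max-dim conical cell opposite a chamber, while keeping track of the opposite direction. The proof will require a careful sign-sequence analysis (via Proposition \ref{sign}) in the twin Tits cone of $E^*$, iterating the single-vertex version of Lemma \ref{consecface} over the vertices of $\mathfrak{A}_*$ and projecting onto $C'$. A secondary technical subtlety, occurring in the $\Leftarrow$ direction, is the common-sub-conical-cell fact when $\dim Y < \dim X$ (so that max-dim conical cells are sector-faces rather than full sectors), which needs separate verification but follows the same pattern as the standard sector case.
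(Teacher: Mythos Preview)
Both directions of your proposal have genuine gaps.

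In $(\Rightarrow)$, the ``key geometric claim'' is not merely unproven but false. For any twin root $\alpha$ containing $\mathfrak{A}_*$, the direction $\sigma$ lies in the recession cone of $\alpha_+$; applying $\mbox{op}_{E^*}$ and using $\mbox{op}_{E^*}\alpha_+=-\alpha_-$, the direction $\sigma'=\mbox{op}_{E^*}\sigma$ lies in the recession cone of $-\alpha_-$, i.e.\ $\sigma'$ points \emph{out} of each $\alpha_-$ occurring in $\mbox{Con}(\mathfrak{A}_*,C')_-=\bigcap\alpha_-$. The recession direction you actually produce on the $E^*_-$ side is the twin $\mbox{tw}_{E^*}\sigma=\mbox{rev}\,\sigma'$, not $\sigma'$. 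Concretely for $D_\infty$ with $\mbox{op}_E$ the identity on $\mathbb{R}$, one computes $\mbox{Con}\bigl([a,\infty)_+,[b,b+1]_-\bigr)_-=(-\infty,b+1]_-$ whenever $b+1\le a$, which goes to $-\infty_-$, not to $+\infty_-=\sigma'$. A single opposite simplex $C'$ therefore cannot by itself yield a conical cell of direction $\sigma'$ inside $Y_-$; this is exactly why the paper's proof of this implication is \emph{iterative}: it repeatedly invokes complete reducibility together with Lemma~\ref{linkapart} to enlarge a convex region $R^{(i)}\subset Y$ inside successively chosen twin apartments $E_i$, pushing an $m$-simplex step by step towards $\mbox{op}_{E_i}A$ until finally $\mbox{Supp}_{E_n}\mathfrak{A}'\subset Y$.

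In $(\Leftarrow)$, the ``secondary subtlety'' is likewise a real obstruction, not a routine extension: two parallel sector-faces of dimension $<\dim X$ need not share a common sub-face (already two distinct parallel rays in a single apartment $\cong\mathbb{R}^2$ are disjoint), so there is no reason for a sub-conical-cell of $\mathfrak{A}$ to lie in $F_+$ merely because $\sigma\in F_+^\infty$. The paper circumvents this by anchoring everything at the vertex $x\in Y_+$ to be opposed: $\mathfrak{A}$ is built with base $x$, the opposite $\mathfrak{U}=\mbox{op}_E\mathfrak{A}$ has base $y=\mbox{op}_Ex$, and Proposition~\ref{comapart} together with the uniqueness of the conical cell of a given direction at a given base point forces $\mathfrak{U}$ and a sub-cell $\mathfrak{A}''\subseteq\mathfrak{A}'$ into a common apartment $E'$; the maximality of $\dim\mathfrak{A}''$ in $Y$ then places $y$ (hence $\mathfrak{U}$) in the support of $\mathfrak{A}''$ inside $E'$, where full-dimensional parallel cells genuinely do intersect.
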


\begin{proof} ($\Leftarrow$) Without loss of generality let $x$ be a vertex in $Y_+$ and $A$ a simplex of maximal dimension in $Y_-$. By Lemma \ref{consecface}, the convex hull of $x$ and $A$,  $\mbox{Con}(x,A)_+$ in $X_+$ contains a conical cell, and since $A$ is of maximal dimension in $Y_+$ so is $\mbox{Con}(x,A)$, call this conical cell $\mathfrak{A}$.  So $\mathfrak{A}$ corresponds to a simplex in $Y^\infty$, which has an interior opposite in $Y^\infty$ with a corresponding sectorface, $\mathfrak{A}'$. Hence $\mathfrak{A}'$ is parallel to a sectorface $\mathfrak{U}$ which is opposite to $\mathfrak{A}$ in some twin apartment $E$.

Let $y$ be the base point of $\mathfrak{U}$. By Proposition \ref{comapart} there is an apartment $E'$ containing $y$ and a conical cell $\mathfrak{A}''$ contained in $\mathfrak{A}'$ with the same direction as $\mathfrak{A}'$. Since $\mathfrak{U}$ is the unique conical cell parallel to $\mathfrak{A}''$ based at $y$ (Lemma 11.75 of \cite{AB08}), $\mathfrak{U}$ is in $E'$. Since $y$ is opposite $x$, dim$(\mbox{Con}(y,\mathfrak{A}''))=$dim$(\mbox{Con}(x, \mathfrak{A}''))$ and since $\mathfrak{A}''$ has maximal dimension in $Y$ dim$(\mbox{Con}(x, \mathfrak{A}''))=$dim$(\mathfrak{A}'')$. Hence, $y$ and $\mathfrak{U}$ are in the support of $\mathfrak{A}''$ in $E'$ and in particular, since $\A''$ and $\mathfrak{U}$ are parallel, their intersection $\mathfrak{U}'$ is a conical cell with the same direction (and therefore, the same dimension) as $\A''$. Since $\mathfrak{U}'\subseteq \mathfrak{U}=$op$_E \A$ we have op$_E \mathfrak{U}'\subseteq$op$_E\mathfrak{U}=\A$ (see Figure \ref{opsector}). Then $\mathfrak{U}'$ and op$_E\mathfrak{U}'$ are opposite conical cells in $Y$, so $Y$ contains the support of $\mathfrak{U}'$ in $E$ which is also the support of $\A$ in $E$. Therefore, there is a vertex in $Y$ which is opposite $x$ and $Y$ is completely reducible.  

\begin{figure}\begin{center}\includegraphics[scale=.5]{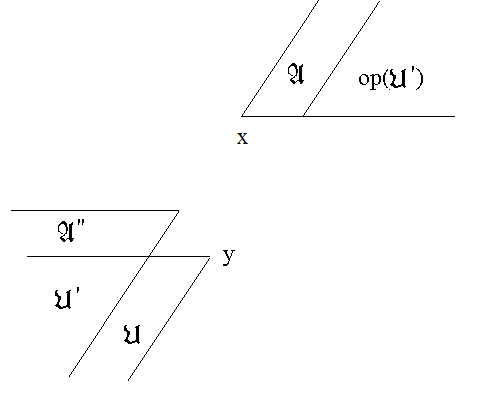}\caption{\label{opsector}Opposite conical cells.}\end{center}\end{figure}

($\Rightarrow$) Let $e$ be a simplex of maximal dimension in $Y^{\infty}$, and let $\A$ be a conical cell in $Y$ which corresponds to $e$.  It suffices to show that there is a twin apartment $E=(E_+,E_-)$ containing a conical cell, $\A'$, which is contained in $\A$ and has the same direction as $\A$ such that Supp$_E \A'\subset Y$.

Without loss of generality, let $\A$ be such a sector face in $Y_+$. Since $Y$ is completely reducible, $Y_-$ is not empty. Let $y$ be a simplex in $Y_-$ and let $m=$dim $(\A)$. By Lemma \ref{comapart} there is a twin apartment $E_0$ containing $y$ and a concical cell $\A'$ contained in $\A$ with the same direction as $\A$. If $y\in \mbox{Int}(\mbox{op}_{E_0}\A')$, then the convex hull $\mbox{Con}(y,\A')$ of $y$ and $\A'$ contains a simplex of dimension $m$ which is opposite to a simplex in $\A'$, hence $\mbox{Con}(y,\A')=\mbox{Supp}_{E_0}\A'$. 

So assume $y\not\in \mbox{Int}(\mbox{op}_{E_0}\A')$. Let $R^{(1)}:=\mbox{Con}(y,\A')$. Then $R_-^{(1)}\cap \mbox{Int}(\mbox{op}_{E_0}\A')$ is empty. Let $A$ be the base simplex of $\A'$. Let $y_1$ be a simplex of dimension $m-1$ such that dist$(y_1,\mbox{op}_{E_0}A)$ is minimal, hence $y_1$ is in the boundary of $R^{(1)}$. Let $d_1$ be the $m$-simplex of $R^{(1)}$ containing $y_1$. Since $Y$ is completely reducible there is a twin apartment $E_1$ containing $d_1$ such that Supp$_{E_1}d_1\subset Y$. In particular, there is an $m$-simplex $d_2\in Y$ such that $d_2\not\in R^{(1)}$ and $y_1\leq d_2$. By Lemma \ref{linkapart}, there is a twin apartment, $E_2$ containing $R^{(1)}$ and $d_2$.

Let $\Phi:E_1\rightarrow E_2$ be the isometry which fixes $E_1\cap E_2$. Since $\Phi$ preseves distance and codistance, then by choice of $d_1$, we know that the dist$(d_2,\mbox{op}_{E_2}A)<$ dist$(d_1,\mbox{op}_{E_1}A)$. Now let $R^{(2)}:=\mbox{Con}(d_2,R^{(1)})\subset Y$, and note that $R^{(2)}\subset E_2$. We continue this process noting that dist$(d_i,\mbox{op}_{E_i}A)<$ dist$(d_{i-1},\mbox{op}_{E_{i-1}}A)$, and $\A'\subset R^{(i)}\subset E_i$ for all $i$. So there is an $n>0$ such that $d_n=\mbox{op}_{E_n}A$ and $\A'\subset E_n$. Since $(d_n,\A')\subset Y$ and $\mbox{Con}(d_n,\A')=\mbox{Supp}_{E_n}\A'$ we have Supp$_{\Sigma_n}\A'\subset Y$.

\end{proof}

\begin{coro}\label{crcoro}Let $Y$ be a completely reducible subcomplex of a twin building $X$. Then $Y^\infty$ is a completely reducible subcomplex of the interior sub-building.\end{coro}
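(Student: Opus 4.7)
The plan is to derive this corollary as a short consequence of Theorem \ref{infinitycr} combined with Serre's criterion (Theorem \ref{Serthm}) applied inside the spherical building $I$. The only substantive issue is confirming that $Y^\infty$ is convex in $I$; everything else is a one-step application of results already proved.

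First I would check that $Y^\infty$ is a convex subcomplex of $I$. Since convexity in a building is equivalent to convexity in each apartment, it suffices to look inside an interior apartment $E^\infty$ arising from a twin apartment $E=(E_+,E_-)$ of $X$. The simplices of $Y^\infty\cap E^\infty$ correspond to the parallel classes of conical cells lying in $Y\cap E$, and by hypothesis $Y\cap E$ is a convex subcomplex of $E$. By Theorem \ref{convex} it is an intersection of twin roots of $E$, and each twin root of $E$ restricts at infinity to a root of $E^\infty$ (cutting the interior apartment into two interior half-apartments). The set of directions of conical cells in $Y\cap E$ is therefore the intersection of the corresponding roots of $E^\infty$, which is convex.

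Next I would apply Theorem \ref{infinitycr} in the forward direction: since $Y$ is $X$-completely reducible, every simplex of maximal dimension in $Y^\infty$ has an interior opposite in $Y^\infty$. In particular, $Y^\infty$ contains at least one pair of interior opposite simplices of maximal dimension. Since interior opposition implies opposition in the ambient spherical building $I$, this gives a pair of opposite simplices in $Y^\infty$ of dimension equal to $\dim(Y^\infty)$.

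Finally, I would invoke the equivalence (a)$\Leftrightarrow$(b) of Theorem \ref{Serthm}: a convex subcomplex of a spherical building is completely reducible if and only if it contains a pair of opposite simplices of its own dimension. Combining the previous two steps, $Y^\infty$ is a convex subcomplex of $I$ containing such a pair, hence $Y^\infty$ is completely reducible in the interior sub-building. The main obstacle I anticipate is the first step, namely the careful translation between twin roots of a twin apartment of $X$ and roots of the corresponding interior apartment of $I$, together with the fact that the ``interior'' qualifier is preserved under this translation; once this is in place, the rest is immediate.
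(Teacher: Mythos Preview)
Your proposal is correct and matches the paper's intended reasoning: the paper states this as an immediate corollary of Theorem~\ref{infinitycr} with no separate proof, relying implicitly on the chain ``interior opposite $\Rightarrow$ opposite in $I$'' together with Serre's criterion. You supply more detail than the paper does by explicitly addressing the convexity of $Y^\infty$; one small caution is that not every apartment of $I_\epsilon$ is an interior apartment, so the reduction to interior apartments in your first step needs a word of justification (e.g.\ work with the projection characterization of convexity rather than the apartment-wise one), but the paper itself glosses over this point.
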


Note that the converse is not true because interior opposition is a stronger condition than opposition in the interior sub-building.

\begin{thm}\label{inftyvert} A convex subcomplex $Y$ is $X$-completely reducible if and only if every vertex in $Y^\infty$ has an interior opposite in $Y^\infty$.\end{thm}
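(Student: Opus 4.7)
The biconditional naturally splits into two directions, with the reverse being the substantive half.

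For $(\Rightarrow)$, my plan is to feed directly into Theorem~\ref{infinitycr}. Assuming $Y$ is $X$-cr, Theorem~\ref{infinitycr} already provides an interior opposite in $Y^\infty$ for every maximal-dimensional simplex of $Y^\infty$. Given a vertex $v \in Y^\infty$, Lemma~\ref{sphcha} applied to $Y^\infty_\pm$ inside an apartment of $I_\pm$ implies that $Y^\infty$ is a chamber complex, so $v$ is a face of some maximal-dimensional simplex $s \in Y^\infty$. Taking an interior opposite $s'$ of $s$ realized in an interior apartment $E^\infty$, the vertex of $s'$ paired with $v$ by the opposition involution of $E^\infty$ is opposite to $v$ inside that interior apartment, and so is the desired interior opposite of $v$ in $Y^\infty$.

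For $(\Leftarrow)$, Theorem~\ref{infinitycr} reduces the problem to upgrading the vertex hypothesis to the same condition on every maximal-dimensional simplex of $Y^\infty$. My plan is to apply Proposition~\ref{oppver} to $Y^\infty$ regarded as a convex subcomplex of the spherical twin building $I = (I_+, I_-)$, where the opposition relation on $I$ is interior opposition --- the natural twin structure on $(I_+, I_-)$ coming from Section~4.2 and \cite{Ro03} via the twinning of sectors in twin apartments of $X$. Since every vertex of a spherical building is automatically spherical, the vertex-level interior-opposite hypothesis on $Y^\infty$ is precisely the premise of Proposition~\ref{oppver}, whose conclusion then gives an interior opposite in $Y^\infty$ for every simplex --- in particular for every maximal-dimensional simplex --- of $Y^\infty$. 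Theorem~\ref{infinitycr} then finishes the argument.

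The hard part will be justifying that Proposition~\ref{oppver} --- whose proof rests on Proposition~\ref{resbuild}, Lemma~\ref{linkop}, and induction on simplicial dimension --- genuinely applies to $(I_+, I_-)$ with interior opposition in the role of opposite chambers, and that convexity of $Y$ in $X$ descends to convexity of $Y^\infty$ in $I$. Both should follow from the constructions of Section~4.2 (interior apartments of $I$ corresponding to twin apartments of $X$, and projections of parallel classes of conical cells in $I$ tracking projections of their representatives in $X$), but they warrant explicit verification because interior opposition is strictly stronger than native spherical opposition in $I_\pm$ and hence needs to be matched to the twin-building codistance axioms. As a backup, one can reprove Proposition~\ref{oppver} directly in the present setting by inducting through the link $(Y^\infty)_v$ at an interior-opposite vertex $v$, with $(Y^\infty)_v$ identified with $(Y^\infty)_{v'}$ by projection inside the interior apartment realizing $v$ opposite $v'$; the link-level analogs of Proposition~\ref{resbuild} and Lemma~\ref{linkop} then push the argument down to the vertex hypothesis.
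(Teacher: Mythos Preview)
Your forward direction matches the paper's: invoke Theorem~\ref{infinitycr} and pass to faces.

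Your reverse direction, however, takes a route that does not go through. You want to apply Proposition~\ref{oppver} to $Y^\infty$ inside $(I_+,I_-)$, treating interior opposition as the opposition relation of a spherical twin building. But $(I_+,I_-)$ with interior opposition does \emph{not} satisfy the twin-building codistance axioms. The twin-tree case already kills it: there $I_\pm$ are rank-$1$ spherical buildings, and the type-$A_1$ axioms (Tw2)--(Tw3) force that for each chamber $d\in I_-$ exactly one $c\in I_+$ fails to be opposite. Yet in the $SL_2(k[t,t^{-1}])$ example of Section~5, fixing $e\in I_+$ corresponding to $Re_1$, every rank-one direct summand $M_2\subset R^2$ with $Re_1\oplus M_2\neq R^2$ (for instance $M_2=R\cdot(1,1+t)$) gives an interior end not interior-opposite to $e$, and there are infinitely many of these. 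So there is no codistance with $\delta^*=1$ equal to interior opposition, and Proposition~\ref{oppver} has nothing to act on. Your backup plan --- redoing the link induction by hand --- runs into the same obstruction: Lemma~\ref{linkop} and the projection isomorphism $X_s\to X_{s'}$ underlying Proposition~\ref{resbuild} both depend on the twin-building gate property, which you do not have between $I_+$ and $I_-$ under interior opposition.

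The paper avoids $I$ entirely for this direction and argues inside $X$. Given a vertex $x\in Y_+$ and a maximal simplex $A\in Y_-$, it builds an increasing chain of convex sets $R^{(i)}\subset Y$ in a common twin apartment $E_i$. At each stage it picks a boundary $(m{-}1)$-simplex of $R^{(i)}_-$ closest to $\mathrm{op}_{E_i}x$, takes a ray $r$ along an edge of $R^{(i)}_+$ lying strictly inside the bounding root, uses the hypothesis to get an interior-opposite ray in $Y_-$, and projects its first edge onto the boundary panel to produce an $m$-simplex $b_i$ just outside $R^{(i)}_-$. Lemma~\ref{linkapart} puts $b_i$ and $R^{(i)}$ in a new twin apartment, and the wall-count separating the region from $\mathrm{op}x$ strictly drops. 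After finitely many steps $R^{(n)}_-$ contains a vertex opposite $x$, and Proposition~\ref{oppver} (applied to $Y$ in $X$, where it is legitimate) finishes.
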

\begin{proof} Note that if $Y$ is completely reducible then every simplex of maximal dimension in $Y^\infty$ has an interior opposite by the previous theorem, and hence every simplex of $Y^\infty$ has an interior opposite since each simplex is a face of simplex of maximal dimension. For the other direction, let $m:=\mbox{dim}(Y)$ let $x\in Y_+$ (the proof is identical if we instead let $x\in Y_-$) and let $A$ be a $m$-simplex of $Y_-$. Let $E_1$ be a twin apartment containing $x$ and $A$.

Let $R^{(1)}=\mbox{Con}(x,A)$ be the convex hull of $x$ and $A$ and let $y$ be a vertex of $R^{(1)}_-$ which has a minimal number of walls separating $y$ and op$_{E_1}x$. Let $H_1$ be a defining hyperplane of $R^{(1)}_-$ and let $\alpha_1$ be the corresponding root (note that $H_1$ separates $y$ and op$_{E_1}x$). Let $r$ be a ray on an edge of $R^{(1)}_+$ which is in the interior of $\alpha_1$. Then $r$ corresponds to a vertex, $e_+$, in ${Y^\infty}_+$ so there is a vertex, $e_-$, in ${Y^\infty}_-$ which is interior opposite $e_+$. Let $s$ be a ray in $Y_-$ corresponding to $e_-$. By Lemma \ref{comapart}, there is a twin apartment containing $y$ and a subray of $s$, and the convex hull of $y$ and this subray contains a ray parallel to $s$ and based at $y$. Let $d_1$ be the first 1-simplex of this ray. Since $s$ is parallel to a ray which is opposite $r$ we know that $d_1$ is not in $R^{(1)}_-$. Let $a_1$ be a $(m-1)$-simplex containing $y$ and in $\partial \alpha_1$.  Since $d_1\not\in R^{(1)}$, it is not in $\alpha_1$ so $b_1:=\mbox{proj}_{a_1}d_1$ is a $m$-simplex containing $a_1$ not in $R^{(1)}$. By Lemma \ref{linkapart} there is a twin apartment, $E_2$, containing $b_1$ and $R^{(1)}$. Let $R^{(2)}:=\mbox{Con}(b_1,R^{(1)})$. Since $b_1\not\in \alpha_1$ there is a vertex $y_2\in R^{(2)}$ such that the number of hyperplanes separating $y_2$ and op$_{E_2}x$ is strictly less than the number of hyperplanes separating $y$ and op$_{E_1}x$. Since this is a finite number we can repeat this process until there is a twin apartment $E_n$ such that there is a vertex $y_n$ of $R^{(n)}_-$ such that there are no hyperplanes separating $y_n$ and op$_{E_n}x$, hence $y_n$ is a vertex opposite $x$. Therefore, every vertex of $Y$ has an opposite vertex in $Y$ so $Y$ is completely reducible.
\end{proof} 

\subsubsection{Group theoretic consequence}

\begin{ex}\label{sln} Consider the group $G=SL_2(R)$ for $R=\mathbb{F}_2[t,t^{-1}]$. Let $K=\mathbb{F}_2(t)$ with $\nu_+$ the valuation on $K$ that gives the order at 0, and $\nu_-$ the valuation that gives the order at infinity. Let $A_{\pm}$ be the corresponding valuation rings. Then following \cite{AB08} section 6.12, we obtain a twin building $X=(X_+,X_-)$ where $X_{\pm}$ is isomorphic to a three regular tree with vertices corresponding to the $A_{\pm}$ lattice classes $[[t^{a_1}f_1,t^{a_2}f_2]]$ for any $K$-basis $\{f_1,f_2\}$ of $K^2$. The opposition involution takes an $A_+$ lattice class $[[t^{a_1}f_1,t^{a_2}f_2]]$ to the $A_-$ lattice class of the same symbol. Let $M=Re_1+Re_2$ in $K^2$ where $\{e_1,e_2\}$ is the standard basis of $K^2$. Then every $R$-basis of $M$ corresponds to a twin apartment. 

The building at infinity for $X_\epsilon$ is the set of ends of the tree. This is the spherical building associated to a vector space $V=\hat{K}^2$ with $\hat{K}$ being the completion of $K$ with respect to the valuation $\nu_\epsilon$ where each vertex corresponds to a subspace of $\hat{K}^2$. 

To find the interior vertices consider the interior ray $\mathfrak{r}$ given by the lattice classes $[[e_1,t^{n}e_2]]$ for $n>0$.  Following Section 11.8.6 of \cite{AB08}, the stabilizer in $SL_2(K)$ of the corresponding end in $X_+^\infty$ is the upper triangular subgroup. Hence the stabilizer in $G=SL_2(R)$ of this end is the upper triangular subgroup of $SL_2(R)$ which stabilizes the $R$-submodule of $M$ given by $Re_1$. In $X_-^\infty$ the stabilizer in $SL_2(R)$ of the end corresponding to the ray $\mathfrak{r}'$ given by the lattice classes $[[e_1,t^{n}e_2]]$ is the lower triangular subgroup. This subgroup stabilizes the $R$-submodule of $M$ given by $Re_2$. 

Similarly, every interior vertex of $X_\pm$ corresponds to an $R$-submodule of $M$ given by $Rf_1$ such that $M=Rf_1\oplus Rf_2$ where $\{f_1,f_2\}$ is an $R$-basis of $M$. In other words, the interior vertices correspond to the rank 1 $R$-submodules of $M$ which are $R$ direct summands of $M$. Since the rank one $R$-submodules correspond to the one dimensional subspaces of $K^2$, the interior sub-building is the same as the sub-building corresponding to $K$.

Two interior vertices $e$ and $e'$ corresponding to $R$-submodules $M_1$ and $M_2$ are interior opposite if and only if $M=M_1\oplus M_2$ as $R$-modules. Note that $M_1=Rf_1$ and $M_2=Rf_2$ since $M_1$ and $M_2$ are rank one so $M=Rf_1\oplus Rf_2$ and $\{f_1,f_2\}$ is an $R$-basis of $M$. Then the ray $[[f_1,t^nf_2]]$ for $n>0$ in $X_+$ gives rise to the end corresponding to $Rf_1$ which is $e$. The ray with the same description in $X_-$ gives rise to the end corresponding to $Rf_2$ which is $e'$. Hence these two rays are opposite so $e$ and $e'$ are interior opposite. 

Conversely, if $e$ and $e'$ are interior opposite there is a twin apartment given by an $R$-basis $\{f_1,f_2\}$ such that $e$ and $e'$ arise from opposite rays, with description $[[t^af_1,t^nf_2]]$ where $a$ is a fixed integer and $n$ increases from 0. Then $M_1=Rt^af_1=Rf_1$ and $M_2=Rf_2$. Since $\{f_1,f_2\}$ is an $R$-basis, $M_1\oplus M_2=Rf_1\oplus Rf_2=M$. Note that this condition is stronger than that for being opposite in the building associated to $K$.




\end{ex}
The above discussion can be generalized to $G=SL_n[R]$ for $R=k[t,t^{-1}]$ for any field $k$. The construction of the corresponding twin building is given in Section 6.12 of \cite{AB08}, which generalizes the method above. If $\{f_1,\ldots,f_n\}$ is an $R$ basis for the free $R$-module $R^n$ then there is a twin apartment, $E$, whose vertices are the lattice classes $[[t^{a_1}f_1,\ldots, t^{a_n}f_n]]$ for $a_1,\ldots,a_n\in \Z$. 

The buildings at infinity $X_\pm$ are the spherical buildings associated to the vector space $\hat{K}^n$ where $K=k(t)$. The vertices correspond to proper subspaces and the simplices correspond to chains of subspaces.

Let $E=(E_+,E_-)$ be a twin apartment associated to the $R$-basis of $R^n$ $\{f_1,\ldots,f_2\}$. The rays in $E$ are the sequences of lattices classes described below:

Let $A$ be a subset of $\{1,\ldots,n\}$ and let $L_m:=[[t^{a_{1,m}}f_1,\ldots, t^{a_{n,m}}f_n]]$ where $a_{i,m}=a_{i,0}$ if $i\in A$ and $a_{i,m}=a_{i,0}+m$ if $i\not\in A$ for $m\in \N$. Then the sequence $\{L_m\}$ of vertices and the one dimensional simplices connecting them is a ray in $E$. The lattice class $[[t^{a_1}f_1,\ldots,t^{-a_k}f_k,\ldots, t^{a_n}f_n]]$ for $a_k>0$ is equivalent to the lattice class $[[t^{a_1+a_k}f_1,\ldots,f_k,\ldots, t^{a_n+a_k}f_n]]$ so the ray given by the sequence of lattice classes $L'_m=[[t^{a_{1,m}}f_1,\ldots, t^{a_{n,m}}f_n]]$ where $a_{i,m}=a_{i,0}$ if $i\not\in A$ and $a_{i,m}=a_{i,0}+m$ if $i\in A$ for $m\in \N$ is the reversal of the ray given by $\{L_m\}$.

Let $\mathfrak{r}$ be the ray in $E_+$ given by the sequence $\{L_m\}$. The associated interior vertex of $I_+$ corresponds to the free $R$-submodule of $R^n$ with basis $\{f_i\}_{i\in A}$. Let $\mathfrak{r}'$ be the ray in $E_-$ given by the sequence $\{L_m\}$, so that $\mathfrak{r}'=$op$_E\mathfrak{r}$. The associated interior vertex of $I_-$ corresponds to the free $R$-submodule of $R^n$ with basis $\{f_i\}_{i\not\in A}$. So we can say that two vertices $e,e'$ of $I=(I_+,I_-)$  corresponding to $R$-submodules $M_1$ and $M_2$ are opposite if and only if $R^n=M_1\oplus M_2$ as $R$-modules using the same argument as in the rank 2 case above.

Let $\Gamma\leq G=SL_n(R)$ be a subgroup. Corollary \ref{crcoro} implies that if $\Gamma$ is a completely reducible subgroup of $G$ then $\Gamma$ is a completely reducible subgroup of $SL_n(K)$. The following proposition follows from the above discussion and Theorem \ref{inftyvert}.

\begin{prop}The subgroup $\Gamma$ is completely reducible if and only if every $\Gamma$-invariant $R$-submodule of $R^n$ which is an $R$ direct summand of $R^n$ has a $\Gamma$-invariant $R$-complement.\end{prop}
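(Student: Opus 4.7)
The plan is to apply Theorem \ref{inftyvert} to the fixed-point subcomplex of $\Gamma$ and then translate the resulting vertex-opposition condition into module-theoretic language using the dictionary between interior vertices of $I$ and $R$-direct summands of $R^n$ that is spelled out in the paragraphs preceding the proposition.

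First I would let $Y=(Y_+,Y_-)$ denote the subcomplex of $X$ fixed pointwise by $\Gamma$. Since $\Gamma$ acts on $X$ by simplicial isometries, $Y$ is a convex subcomplex in the sense of Theorem \ref{convex}: projections in the building are canonical and therefore commute with the $\Gamma$-action, so the fixed set is closed under projections both within and between the two halves of the twin building. By hypothesis $Y_\epsilon$ is nonempty for each $\epsilon$, and by Theorem \ref{inftyvert} the subcomplex $Y$ is $X$-completely reducible if and only if every vertex of $Y^\infty$ has an interior opposite in $Y^\infty$.

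Next I would identify the vertices of $Y^\infty$ with $\Gamma$-invariant $R$-direct summands of $R^n$. From the discussion preceding the proposition, each interior vertex of $I_\epsilon$ is the class of a ray in some twin apartment built from an $R$-basis $\{f_1,\dots,f_n\}$ of $R^n$, and its associated $R$-submodule is $\sum_{i\in A}Rf_i$ for some nonempty proper $A\subseteq\{1,\dots,n\}$. Since $R=k[t,t^{-1}]$ is a principal ideal domain, every proper nonzero $R$-direct summand $M$ of $R^n$ arises in this way: both $M$ and a chosen complement are free, and any pair of $R$-bases combines to an $R$-basis of $R^n$. This gives a bijection between interior vertices of $I=(I_+,I_-)$ and proper nonzero $R$-direct summands of $R^n$, under which an interior vertex is fixed by $\Gamma$ exactly when the associated submodule is $\Gamma$-invariant, and under which two interior vertices with associated submodules $M_1$ and $M_2$ are interior opposite exactly when $R^n=M_1\oplus M_2$.

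Assembling these identifications, a $\Gamma$-fixed vertex of $Y^\infty$ with associated submodule $M$ has an interior opposite in $Y^\infty$ if and only if $M$ admits a $\Gamma$-invariant $R$-complement in $R^n$. Combined with the Theorem \ref{inftyvert} reduction of the first paragraph, this is exactly the statement of the proposition. The main obstacle is bookkeeping rather than content: I must make sure the correspondence between interior vertices and $R$-direct summands is basis-independent and $\Gamma$-equivariant, and that every direct summand (not merely those tied to one fixed apartment) is captured on the building side. Once this dictionary is set up, the proposition is a direct consequence of Theorem \ref{inftyvert}.
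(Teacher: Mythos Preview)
Your proposal is correct and matches the paper's approach exactly: the paper gives no separate proof of this proposition but simply states that it ``follows from the above discussion and Theorem \ref{inftyvert},'' and your write-up is precisely that deduction with the bookkeeping spelled out. The only point worth flagging is your identification of $Y^\infty$ with the $\Gamma$-fixed interior vertices; this is justified because from any $\Gamma$-fixed point $x\in Y_\epsilon$ the unique ray toward a $\Gamma$-fixed interior end is itself $\Gamma$-fixed, hence lies in $Y_\epsilon$, so the dictionary you set up is indeed the one the paper is implicitly using.
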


\begin{prop} Let $K=k(t)$ and let $\Gamma$ be a completely reducible subgroup of $G$. Then $R^n=M_1\oplus\cdots\oplus M_k$ where each $M_i$ is a $\Gamma$-invariant $R$ submodule such that $K\otimes_R M_i$ is irreducible in $K^n$.\end{prop}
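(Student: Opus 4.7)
The plan is to induct on the rank $n$, after reformulating complete reducibility via the previous proposition: $\Gamma$ is completely reducible if and only if every $\Gamma$-invariant $R$-submodule of $R^n$ that is an $R$-direct summand of $R^n$ admits a $\Gamma$-invariant $R$-complement. The base case is when $K^n = K \otimes_R R^n$ is already irreducible as a $\Gamma$-representation over $K$; then the trivial decomposition $k=1$, $M_1 = R^n$ works.

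For the inductive step, suppose $K^n$ has a proper nonzero $\Gamma$-invariant $K$-subspace $V$, and set $M := V \cap R^n$, which is visibly a $\Gamma$-invariant $R$-submodule of $R^n$. Two things must be checked about $M$. First, $K \otimes_R M = V$: any $v \in V$ admits some nonzero $c \in R$ with $cv \in R^n$, and then $cv \in V \cap R^n = M$, so $v$ lies in the image of $K \otimes_R M$. Second, $M$ is an $R$-direct summand of $R^n$: the quotient $R^n / M$ injects into $K^n / V$, hence is $R$-torsion-free, and since $R = k[t,t^{-1}]$ is a PID (being a localization of $k[t]$), this finitely generated torsion-free module is free, so $M$ splits off. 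Applying the previous proposition now produces a $\Gamma$-invariant $R$-complement $M'$ of $M$ in $R^n$, with $K \otimes_R M'$ isomorphic to $K^n / V$ as a $\Gamma$-representation.

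To iterate, I verify that the restricted $\Gamma$-action on $M$ (and symmetrically on $M'$) still has the property that every $\Gamma$-invariant $R$-direct summand of $M$ admits a $\Gamma$-invariant $R$-complement in $M$. If $N \subseteq M$ is such a summand, then $N$ is also an $R$-direct summand of $R^n$ (using $R^n = M \oplus M'$ together with any $R$-complement of $N$ in $M$), so the hypothesis on $R^n$ furnishes a $\Gamma$-invariant $R$-complement $N^* \subseteq R^n$. Then $N^* \cap M$ is a $\Gamma$-invariant $R$-complement of $N$ in $M$: for $m \in M$, write $m = n + n^*$ with $n \in N \subseteq M$ and $n^* \in N^*$, so $n^* = m - n \in M$, giving $M = N + (N^* \cap M)$; directness follows from $N \cap N^* = 0$. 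Since both $M$ and $M'$ have rank strictly less than $n$ (as $V$ is proper and nonzero), the inductive hypothesis yields decompositions $M = M_1 \oplus \cdots \oplus M_l$ and $M' = M_{l+1} \oplus \cdots \oplus M_k$ with each $K \otimes_R M_i$ irreducible, and concatenating gives the desired decomposition of $R^n$.

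The principal technical point is the direct-summand claim for $M$, which crucially uses that $R$ is a PID so that the finitely generated torsion-free module $R^n / M$ is free; without this, $V \cap R^n$ need not split off and the induction collapses. The secondary subtlety is the inheritance of the complete reducibility property by $M$ and $M'$, but this is handled by the short intersection argument above.
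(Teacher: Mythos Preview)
Your proof is correct. The approach differs from the paper's in organization: the paper proceeds iteratively rather than by induction on rank. It chooses an \emph{irreducible} $\Gamma$-invariant subspace $S_1\subset K^n$, sets $M_1=S_1\cap R^n$, obtains a $\Gamma$-invariant complement $M_1'$ in $R^n$, and then---crucially---continues to work in $R^n$ rather than in $M_1'$: it picks an irreducible $S_2\subset K\otimes_R M_1'$, sets $M_2=S_2\cap R^n$, and applies complete reducibility to the summand $M_1\oplus M_2$ of $R^n$ to get $M_2'$, and so on. Because each step invokes the hypothesis only on $R^n$ itself, the paper never needs your inheritance lemma showing that $M$ and $M'$ again satisfy the complete-reducibility property. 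The trade-off is that the paper silently uses (without stating) the PID facts you made explicit: that $S_i\cap R^n$ is a direct summand and that $M_1\oplus\cdots\oplus M_i$ is a direct summand at each stage. Your version is slightly longer but more self-contained on these structural points; the paper's version is shorter and avoids the auxiliary inheritance argument by always returning to $R^n$.
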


\begin{proof} If $K^n$ has no proper nonzero $\Gamma$-invariant submodules then $K^n$ is irreducible and $R^n$ is a $\Gamma$-invariant $R$ submodule such that $K\otimes_RR^n$ is irreducible. So let $S_1$ be a proper nonzero $\Gamma$-invariant irreducible submodule of $K^n$. Then $M_1:=S_1\cap R^n$ is a $\Gamma$-invariant $R$-submodule of $R^n$ which is a $R$-direct summand of $R^n$. Since $\Gamma$ is completely reducible, there exists a $M_1'$ which is a $\Gamma$-invariant submodule such that $R^n=M_1\oplus M_1'$. If $S_1':=K\otimes_R M_1'$ is irreducible we are done. If not, let $S_2$ be a proper nonzero irreducible $\Gamma$-invariant submodule of $S_1'$ and let $M_2=S_2\cap R^n$. Then $M_1\oplus M_2$ is a $\Gamma$-invariant submodule of $R^n$ which is a $R$-direct summand so it has a $\Gamma$-invariant $R$-complement $M_2'$. We can continue this process which terminates since $n<\infty$ to get $R^n=M_1\oplus M_2\oplus\cdots\oplus M_k$ with $M_i$ $\Gamma$-invariant and $S_i=K\otimes_R M_i$ irreducible by construction.\end{proof}

\subsection{Questions}
\begin{quest} Thick Euclidean buildings of rank greater than 3 (and rank equal to 3 if the building at infinity is Moufang) have been classified and correspond to certain groups.  What are the group theoretic consequences of Theorems \ref{infinitycr} and \ref{inftyvert} to these other groups?\end{quest}

\begin{quest} Recently, Caprace and L{\'e}cureux wrote a paper \cite{CL09} on compactifications of arbitrary buildings which extend the notion of the building at infinity for Euclidean buildings to more general buildings. Since Kac-Moody groups are in general not Euclidean, it would be interesting to see if there is a condition for complete reducibility on a twin building consisting of pairs with this compactification and extend the results of Theorems \ref{infinitycr} and \ref{inftyvert} to the non-Euclidean case.\end{quest}

\bibliography{cites}
\bibliographystyle{alpha}

\end{document}